\def\toP{{\scriptstyle\,\buildrel{\P}\over{\hbox to
0.6cm{\rightarrowfill}}\,}}
\newcommand{\BR}{{\mathbb R}}
\newcommand{\sgn}{\mbox{\rm sgn}}
\newcommand{\TT}{{\cal T}}
\newcommand{\MM}{{\cal M}}
\newcommand{\VV}{{\cal V}}
\newcommand{\EE}{{\cal E}}
\newcommand{\FF}{{\cal F}}
\newcommand{\essinf}{\mathop{\mathrm{ess\,inf}}}
\newcommand{\esssup}{\mathop{\mathrm{ess\,sup}}}
\newtheorem{theorem}{\bf Theorem}[subsection]
\newtheorem{proposition}[theorem]{\bf Proposition}
\newtheorem{lemma}[theorem]{\bf Lemma}
\theoremstyle{definition}
\newtheorem{definition}[theorem]{Definition}
\newtheorem{remark}[theorem]{\bf Remark}
\newcommand{\nsubsection}{\setcounter{equation}{0}\subsection}
\begin{document}

\title{Reflected BSDEs with two optional barriers and monotone coefficient on general filtered space}
\author {Tomasz Klimsiak and Maurycy Rzymowski\\
{\small Faculty of Mathematics and Computer Science,
Nicolaus Copernicus University} \\
{\small  Chopina 12/18, 87--100 Toru\'n, Poland}}
\date{}
\maketitle
\begin{abstract}
We consider reflected backward stochastic differential equations with two optional barriers of class (D) satisfying Mokobodzki's separation
condition and coefficient which is only continuous and non-increasing.
We  assume that data are merely integrable and the terminal time is an arbitrary (possibly infinite) stopping time. We study the problem of  existence and uniqueness of solutions, and their connections with the value process in  nonlinear Dynkin games.
\end{abstract}

\footnotetext{{\em Mathematics Subject Classification:} Primary 60H10; Secondary 60G40.}

\footnotetext{{\em Keywords:} Reflected backward stochastic differential equation, optional barriers, processes
with regulated trajectories, Dynkin games, nonlinear expectation}

\nsubsection{Introduction}

Let $\mathbb{F}=(\FF_t)_{t\ge 0}$ be a general filtration satisfying merely the usual conditions and   $T$ be an arbitrary (possibly infinite) $\mathbb{F}$-stopping time. We also assume as given an $\FF_T$-measurable random variable $\xi$,  a real function $f$ defined on $\Omega\times \BR^+\times \BR$, which is  $\mathbb F$-progressively measurable with respect to first two  variables, and two $\mathbb{F}$-optional processes $L,U$ satisfying some separation condition.
In the present paper, we consider reflected backward stochastic differential equations (RBSDE for short)  which informally can be written in the form
\begin{equation}
\label{intr1}
\begin{cases}
dY_t=- f(t,Y_t)\,dt-\,dR_t+\,dM_t\,\,\, \mbox{on}\,\,\,[0,T],\quad Y_T=\xi,\\
(Y,M,R)\in \mathcal O\times \MM_{loc}\times \VV_p\,, \\
L\le Y\le U\,\,\,\mbox{on}\,\,\, [0,T],\quad  dR\,\, \mbox{is ``minimal"}.
\end{cases}
\end{equation}
In (\ref{intr1}), $ \mathcal O$ (resp. $\MM_{loc}, \VV_p$) is the space of $\mathbb F$-optional processes (resp. local $\mathbb F$-martingales, finite variation $\mathbb F$-predictable processes).
We study the problem of existence and uniqueness of solutions to (\ref{intr1}) and connections of (\ref{intr1}) with nonlinear Dynkin games.

The notion of BSDEs with two reflecting barriers was introduced in 1996 by Cvitanic and Karatzas \cite{CK}. They considered bounded terminal time $T$,  continuous barriers $L,U$ and  filtration $\mathbb F$ generated by a Brownian motion. In that case (\ref{intr1}) may be formulated rigorously as follows:
\begin{equation}
\label{eq1.1}
\begin{cases}
Y_t=\xi+\int^T_t f(r,Y_r,Z_r)\,dr+\int^T_t\,dR_r-\int^T_t Z_r\,dB_r,\quad t\in[0,T],\\
(Y,Z,R)\in \mathcal O_c\times \mathcal O\times \VV_c,\\
L_t\le Y_t\le U_t,\,\, t\in[0,T],\quad \int^T_0(Y_r-L_r)\,dR^+_r=\int^T_0(U_r-Y_r)\,dR^-_r=0,
\end{cases}
\end{equation}
where $\mathcal O_c$ (resp. $\mathcal V_c$) is the subspace of $\mathcal O$ (resp. $\mathcal V_p$) consisting of continuous
processes.
In \cite{CK} it is assumed that the generator $f$ is Lipschitz continuous with respect to $y,z$ and the data are in $L^2$, that is  $\xi, f(\cdot,0,0)$, $\sup_{t\le T} |L_t|$ and $\sup_{t\le T}|U_t|$ are square-integrable. Note that in the special case of Brownian filtration each  local $\mathbb F$-martingale $M$ is of the form $M=Z\,dB$, which allows one to  consider $f$ depending also on $Z$.

In \cite{CK} an existence and uniqueness result for (\ref{eq1.1}) is proved. Moreover, it is shown there that the solution is linked with Dynkin game via  the  formula
\begin{equation}\label{intr2}
Y_\alpha=\esssup_{\sigma\ge \alpha}\essinf_{\tau\ge \alpha}E\Big(\int_\alpha^{\tau\wedge\sigma}f(r,Y_r,Z_r)\,dr+L_\sigma\mathbf{1}_{\sigma<\tau}
+U_\tau\mathbf{1}_{\tau\le\sigma<T}+\xi\mathbf{1}_{\sigma=\tau=T}|\FF_{\alpha}\Big)
\end{equation}
holding for any stopping time $\alpha\in[0,T]$. A similar result was obtained in 1997 by El Karoui et al. \cite{EKPPQ} in the case of one barrier.

The theory of RBSDEs has been significantly developed over the last two decades and assumptions from the  paper by  Cvitanic and Karatzas \cite{CK} were successively
weakened. We will provide a  brief review of the literature to show the main directions of relaxing of the standard assumptions considered 
in the pioneering paper by Cvitanic and Karatzas.   The case of c\`adl\`ag barriers  is considered in \cite{Ha1,KBSM,LX}. RBSDEs with   monotone generator satisfying weak growth condition are studied in \cite{KBSM,LMX,RS} and we refer the reader to \cite{BY,HP,KBSM,RS} for equations with $L^p$-data for $p\in[1,2]$. Equations with  Brownian-Poisson filtration and c\`adl\`ag barriers were studied in \cite{HH1,HO,HO1,HW,RY1}. The case of general, right-continuous filtration $\mathbb{F}$, monotone generator $f$ and $L^1$-data was studied in \cite{KSPA}. Equations with  $T$ being an arbitrary stopping time were studied in \cite{RS1},  in \cite{AO} (Brownian-Poisson filtration) and  \cite{Kl} (general filtration).

In most of the existing papers on RBSDEs c\`adl\`ag barriers are considered, and
there are only few papers dealing with non-c\`adl\`ag case. Such equations with
$L^2$-data and Lipschitz continuous generator were studied in  \cite{MM} (Brownian filtration), in \cite{GIOOQ,GIOQ2,MM2} (Brownian-Poisson filtration) and
\cite{BO,BO2,GIOQ} (general filtration).  RBSDEs with $L^1$-data and optional barriers
were considered only in \cite{KRzS,KRzS2} in  case of Brownian filtration and bounded terminal time.

As already mentioned, in the present paper we study the existence and uniqueness of solutions of class (D) to RBSDEs (\ref{intr1}) with  general filtration  $\mathbb{F}$ and possibly infinite $\mathbb{F}$-stopping time $T$. We assume  that $L$ and $U$ are $\mathbb{F}$-optional processes of class (D) satisfying Mokobodzki's condition  and such that
\begin{equation}
\label{intr13}
\limsup_{a\rightarrow\infty}L_{T\wedge a}\le\xi
\le\liminf_{a\rightarrow\infty}U_{T\wedge a}.
\end{equation}
As for  $\xi,f$ and $V$ we will assume that they  satisfies the following conditions.
\begin{enumerate}
\item[(H1)] $E(|\xi|+\int^T_0\,d|V|_r+\int^T_0|f(r,0)|\,dr)<\infty$,
\item[(H2)] for a.e. $t\in[0,T]$ the function $t\mapsto f(t,y)$ is nonincreasing,
\item[(H3)] $y\mapsto f(t,y)$ is continuous for every $t\in[0,T]$,
\item[(H4)] $\int^T_0|f(r,y)|\,dr<\infty$ for all $y\in\mathbb{R}$.
\end{enumerate}

In Section \ref{sec3}, for $f$ satisfying (H1)--(H4) we  introduce the notion  of nonlinear
$f$-expectation
\[
\mathcal{E}^f_{\alpha,\beta}:L^1(\Omega,\mathcal{F}_{\beta},P)\rightarrow L^1(\Omega,\mathcal{F}_{\alpha},P),
\]
associated with BSDE (\ref{intr1}) with no reflection, and we prove its basic properties. Here $\alpha\le \beta$ are stopping times.

In Section \ref{sec4}, we give a definition of a solution to (\ref{intr1}).
By the definition, we are looking for a triple $(Y,M,R)\in\mathcal O\times \MM_{loc}\times \VV_p$  such that $Y$ is a regulated (l\`adl\`ag) process such that
\begin{align*}
Y_t=Y_{T\wedge a}+\int^{T\wedge a}_t f(r,Y_r)\,dr+\int_{t}^{T\wedge a}\,dR-\int^{T\wedge a}_t\,dM_r,\quad t\in[0,T\wedge a].
\end{align*}
for every $a\ge0$, and moreover, $Y$ satisfies the terminal condition of the form
\[
Y_{T\wedge a}\rightarrow \xi,\quad a\rightarrow \infty.
\]
We also require that $R$ is minimal in the sense that for every $a\ge0$,
\begin{align*}
&\int^{T\wedge a}_0(Y_{r-}-\overrightarrow{L}_r)\,dR^{*,+}_r+\sum_{0\le r<T\wedge a}(Y_r-L_r)\Delta^+R^+_r\\
&\qquad+\int^{T\wedge a}_0(\underrightarrow{U}_r-Y_{r-})\,dR^{*,-}_r+\sum_{0\le r<T\wedge a}(U_r-Y_r)\Delta^+R^-_r=0,
\end{align*}
where $R=R^+-R^-$ is the Jordan decomposition of $R$,
\[
\overrightarrow{L}_r=\limsup_{s\uparrow r}X_s,\qquad \underrightarrow{U}_r=\liminf_{s\uparrow r}X_s,
\]
and $R^{*,+}$ (resp. $R^{*,-}$) is the c\`adl\`ag part of the process $R^{+}$ (resp. $R^{-}$).
We prove that there exists at most one solution $(Y,M,R)$ to RBSDE (\ref{intr1}) such that $Y$ is of class (D).

In Section \ref{sec5}, we prove that under (H1)--(H4) there exists a solution $(Y,M,R)$ to RBSDE (\ref{intr1}) with one reflecting lower barrier $L$ such that $Y$ is of class (D). In this case $R$ is an increasing process.
We also show that for every stopping time $\alpha\le T$,
\[
Y_\alpha=\esssup_{\tau\ge \alpha}\EE^f_{\alpha,\tau}(L_\tau\mathbf{1}_{\tau<T}+\xi\mathbf{1}_{\tau=T}).
\]
Let us stress here that in general $ER_T$ and $E\int_0^T|f(r,Y_r)|\,dr$ are
infinite. We give necessary and sufficient condition
for which these integrals are finite. We show that if this is the case, then
\[
Y_\alpha=\esssup_{\tau\ge \alpha}E(\int_\alpha^\tau f(r,Y_r)\,dr +L_\tau\mathbf{1}_{\tau<T}+\xi\mathbf{1}_{\tau=T}|\FF_\alpha).
\]

In Section \ref{sec6}, we are focused on our main goal, i.e.  equation (\ref{intr1}).
We first show that each solution to (\ref{intr1}) such that $Y$ is of class (D) admits the representation
\[
Y_{\alpha}=\esssup_{\rho=(\tau,H)\in\mathcal{S}_{\alpha}}\essinf_{\delta=(\sigma,G)\in\mathcal{S}_{\alpha}}\mathcal{E}^f_{\alpha,\tau\wedge\sigma}(L^u_{\rho}\mathbf{1}_{\{\tau\le\sigma<T\}}+U^l_{\delta}\mathbf{1}_{\{\sigma<\tau\}}+\xi\mathbf{1}_{\{\tau=\sigma=T\}}),
\]
where $\rho,\delta$ are the so called \textit{stopping systems} (see Section \ref{sec6}), and
\[
L^u_{\rho}=L_{\tau}\mathbf{1}_H+\overleftarrow{L}_{\tau}\mathbf{1}_{H^c},\qquad U^l_{\rho}=U_{\tau}\mathbf{1}_G+\underleftarrow{U}_{\tau}\mathbf{1}_{G^c}.
\]
Using this representation we prove a stability result for (\ref{intr1}).
To prove the existence of a solution,  we consider the  nonlinear
decoupling system
\begin{equation}\label{intr3}
\begin{cases}
Y^1_t=\esssup_{t\le\tau\le T}E(Y^2_\tau+\int_t^\tau f(r,Y^1_r-Y^2_r)\,dr+L_\tau\mathbf{1}_{\tau<T}+\xi\mathbf{1}_{\tau=T}|\FF_t),\\
 Y^2_t=\esssup_{t\le\tau\le T}E(Y^1_\tau\mathbf{1}_{\tau<T}-U_\tau\mathbf{1}_{\tau<T}|\FF_t)
\end{cases}
\end{equation}
introduced in the linear case ($f\equiv 0$)  by Bismut \cite{Bismut1}. Since, as we mentioned before, the integral in (\ref{intr3}) may be infinite,
we reformulate (\ref{intr3}) as a the following system of RBSDEs with one reflecting lower barrier:
\begin{equation}\label{intr12}
\begin{cases}
dY^1_t=- f(t,Y^1_t-Y^2_t)\,dr-\,dR^1_t+\,dM^1_t\,\,\, \mbox{on}\,\,\,[0,T],\quad Y^1_T=\xi,\\
dY^2_t=-\,dR^2_t+\,dM^2_t\,\,\, \mbox{on}\,\,\,[0,T],\quad Y^2_T=0,\\
Y^2+L\le Y^1,\quad Y^1-U\le Y^2.
\end{cases}
\end{equation}
Using  the results of Section \ref{sec5} we prove that  under (H1)--(H4), (\ref{intr13}) and Mokobodzki's condition (the existence of a special semimartingale between the barriers) there exists a solution $(Y^1, M^1,R^1)$, $(Y^2,M^2,R^2)$ to (\ref{intr12}) such that $Y^1, Y^2$ are of class (D). Next we show that the triple
 \[
 (Y,M,R):= (Y^1, M^1,R^1)-(Y^2,M^2,R^2)
 \]
is a solution to (\ref{intr1}).  We also give a necessary and sufficient condition under which $E|R|_T$ and $E\int_0^T|f(r,Y_r)|\,dr$ are finite. Finally, using our stability result, we show that there exists a solution $(Y,M,R)$ to (\ref{intr1}) such that $Y$ is of class (D) even if $f$ does not satisfy (H1),  i.e. under  (H2)--(H4), (\ref{intr13}) and Mokobodzki's condition.  It is worth pointing here that in general,  without (H1) imposed on $f$ there is no solution to equation of  type (\ref{intr1})  with no reflection. In other words, we show that for the existence of solutions to reflected BSDEs weaker assumptions on $f$ are needed  than for the existence of solutions to related BSDEs.

\nsubsection{Notation and standing assumptions}
\label{sec2}

Let $a\ge 0$. We say that a function $y:[0,a]\rightarrow\mathbb{R}$ is  regulated if the limit $y_{t+}=\lim_{u\downarrow t}y_u$ exists for every $t\in[0,a)$, and the limit $y_{s-}=\lim_{u\uparrow s}y_u$ exists for every $s\in(0,a]$.
For any regulated function $y$ on $[0,a]$ we set $\Delta^{+}y_t=y_{t+}-y_t$ if $0\leq t< a$, and  $\Delta^{-}y_s=y_s-y_{s-}$ if  $0<s\leq a$. It is known that each regulated function is bounded and has at most countably many discontinuities (see, e.g., \cite[Chapter 2, Corollary 2.2]{dn}).
For $x\in\mathbb{R}$, we set $\sgn(x)=\mathbf{1}_{x\neq 0}\,x/|x|$.

Let $(\Omega,\mathcal{F},P)$ be a probability space, $\mathbb{F}=(\mathcal{F}_t)_{t\ge 0}$ be a filtration satisfying the usual conditions and let $T$ be an $\mathbb{F}$-stopping time. For $a\ge 0$, we set $T_a=T\wedge a$.
For  fixed stopping times $\sigma,\tau$ we denote by $\mathcal{T}_{\sigma,\tau}$ the set of all $\mathbb{F}$- stopping times taking values in $[\sigma,\tau]$. We alsoset $\mathcal{T}=\mathcal{T}_{0,T}$ and $\mathcal{T}_{\tau}=\mathcal{T}_{\tau,T}$. An increasing sequence $\{\tau_k\}\subset\mathcal{T}$ is called a chain (on $[0,T]$) if
\[
\forall{\omega\in\Omega}\quad
\exists{n\in\mathbb{N}}\quad\forall{k\ge n}\quad\tau_k(\omega)=T.
\]

We say that an $\mathbb{F}$-progressively measurable process $X$ is of class (D) if the family $\{X_{\tau},\,\tau\in\mathcal{T},\,\tau<\infty\}$ is uniformly integrable. We equip the space of processes of class (D) with the norm $\|X\|_1=\sup_{\tau\in\mathcal{T},\,\tau<\infty}E|X_{\tau}|$. In the sequel  in case if $X_\infty$ is not defined we set $X_\infty=0$.

We denote by $\mathcal{M}$ (resp. $\mathcal{M}_{loc}$) the set of all $\mathbb{F}$- martingales (resp. local martingales) such that $M_0=0$, and by $\mathcal{V}$ (resp. $\mathcal{V}^+$) the space of all $\mathbb{F}$- progressively measurable processes of finite variation (resp. increasing) such that $V_0=0$. $\mathcal{V}^1$ is the set of processes $V\in\mathcal{V}$ (resp. $V\in\mathcal{V}^+$) such that $E|V|_T<\infty$, where $|V|_T$ stands for the total variation of $V$ on $[0,T]$. $\mathcal{V}_p$ (resp. $\mathcal{V}^+_p$) is the space of all predictable $V\in\mathcal{V}$ (resp. $V\in\mathcal{V}^+$).  For $V\in \mathcal{V}$, by $V^*$ we denote the
c\`adl\`ag part of the process $V$, and by  $V^d$ its purely
jumping part consisting of right jumps, i.e.
\[
V^d_t=\sum_{s<t}\Delta^+V_s,\quad V^*_t=V_t-V^d_t,\quad t\in
[0,T].
\]

In the whole paper all relations between random variables are understood to hold
$P$-a.s. For processes $X$ and $Y$, we  write $X\le Y$ if $X_t\le Y_t$,
$t\in[0,T_a]$, $a\ge 0$. For a process $X$, we set $\overrightarrow{X}_s=\limsup_{r\uparrow s}X_r$, $\overleftarrow{X}_s=\limsup_{r\downarrow s}X_r$, $\underrightarrow{X}_s=\liminf_{r\uparrow s}X_r$ and $\underleftarrow{X}_s=\liminf_{r\downarrow s}X_r$, $s\in[0,T_a]$, $a\ge 0$. By \cite[Theorem 90, page 143]{dm}, if $X$ is an optional process  of class (D), then $\overleftarrow X, \underleftarrow X$ are progressively measurable, and $\overrightarrow X, \underrightarrow X$ are predictable processes.

In the whole paper,  $L$ and $U$ are $\mathbb{F}$-adapted optional processes of class (D), and $\xi$ is $\mathcal{F}_T$-measurable random variable. We always assume that (\ref{intr13}) is satisfied.
The generator (coefficient) is a map
\[
\Omega\times[0,T]\times\mathbb{R}\ni(\omega,t,y)\mapsto f(\omega,t,y)\in\mathbb{R},
\]
which is $\mathbb{F}$-progressively measurable for  fixed $y$. As for $V$, we
always assume that $V\in\VV$.

\nsubsection{BSDEs and nonlinear $f$-expectation}
\label{sec3}

\begin{definition}\label{pt0}
We say that a pair $(Y,M)$ of $\mathbb F$-adapted processes is a solution of the backward stochastic differential equation on the interval $[0,T]$ with right-hand side $f+dV$, terminal value $\xi$ (BSDE$^T$($\xi,f+dV$) for short) if
\begin{enumerate}
\item[(a)] $Y$ is regulated and $M\in\mathcal{M}_{loc}$,
\item[(b)] $\int^{T_a}_0|f(r,Y_r)|\,dr<+\infty$ for every $a\ge 0$,
\item[(c)] for every $a\ge 0$,
\begin{align*}
Y_t=Y_{T_a}+\int^{T_a}_t f(r,Y_r)\,dr+\int^{T_a}_t\,dV_r-\int^{T_a}_t\,dM_r,\quad t\in[0,T_a],
\end{align*}
\item[(d)] $\lim_{a\rightarrow\infty}Y_{T_a}=\xi$ a.s.
\end{enumerate}
\end{definition}

\begin{remark}
Existence, uniqueness and some other  properties of solutions to equation
BSDE$^T$($\xi,f+dV)$ we will use later on follow from \cite{Kl} (see also \cite{K7}).
In these papers it is assumed that $V$ is c\`adl\`ag but the results of \cite{Kl,K7} may
be applied to the case when $V$ is regulated by a simple change of variables. Indeed,
if  $(\bar{Y},\bar{M})$ is a solution to BSDE$^T(\xi,f^*+dV^*)$ with
\[
f^*(t,y)= f(t,y+V^d_t),
\]
then  $(Y,\bar{M})$ with  $Y=\bar{Y}+V^d$ is a solution to BSDE$^T(\xi,f+dV)$.
\end{remark}

\begin{lemma}
\label{lm.ad3.1}
Assume \mbox{\rm(H1), (H2)}. Let $(Y,M)$ be a solution to BSDE$^T(\xi,f)$. Then for every $\delta\in \TT$,
\[
E\Big(\int_\delta^T |f(r,Y_r)|\,dr\big|\FF_\delta\Big)\le 2 E\Big(|\xi|-|Y_\delta|+\int_\delta^T|f(r,0)|\,dr\big|\FF_\delta\Big).
\]
\end{lemma}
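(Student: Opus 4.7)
The plan is to apply the It\^o--Tanaka formula to $|Y|$, exploit the monotonicity assumption (H2) to bound $|f(r,Y_r)|$ by $-\sgn(Y_r)f(r,Y_r)+2|f(r,0)|$, and finally to take conditional expectations after localizing the local martingale $M$.

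Since the equation in Definition~\ref{pt0} has no finite-variation drift $V$, the process $Y$ is a c\`adl\`ag semimartingale with dynamics $dY_r=-f(r,Y_r)\,dr+dM_r$. The Tanaka--Meyer formula applied to $x\mapsto|x|$ on $[\delta,T_a]$ then gives
\[
|Y_{T_a}|-|Y_\delta|=-\int_\delta^{T_a}\sgn(Y_{r-})f(r,Y_r)\,dr+\int_\delta^{T_a}\sgn(Y_{r-})\,dM_r+A^a_\delta,
\]
where $A^a_\delta\ge 0$ collects the local time at $0$ together with the nonnegative convexity corrections $|Y_r|-|Y_{r-}|-\sgn(Y_{r-})\Delta Y_r$ at the jumps of $M$. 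On the other hand, by (H2) the numbers $\sgn(y)$ and $f(t,y)-f(t,0)$ have opposite signs, so $|f(t,y)-f(t,0)|=-\sgn(y)(f(t,y)-f(t,0))$, and the triangle inequality yields the pointwise bound $|f(t,y)|\le -\sgn(y)f(t,y)+2|f(t,0)|$. Integrating over $[\delta,T_a]$ (noting that $\sgn(Y_r)=\sgn(Y_{r-})$ off a $dr$-null set), inserting the Tanaka identity, and discarding $A^a_\delta\ge 0$, we obtain the pathwise inequality
\[
\int_\delta^{T_a}|f(r,Y_r)|\,dr\le |Y_{T_a}|-|Y_\delta|+2\int_\delta^{T_a}|f(r,0)|\,dr-\int_\delta^{T_a}\sgn(Y_{r-})\,dM_r.
\]

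It remains to take $E(\,\cdot\,|\FF_\delta)$ and pass to the limit $a\to\infty$. Pick a localizing sequence $(\sigma_k)$ for $M$; stopping at $T_a\wedge\sigma_k$ turns the stochastic integral into a uniformly integrable martingale whose conditional expectation given $\FF_\delta$ is zero. Letting first $k\to\infty$ and then $a\to\infty$, monotone convergence controls $\int_\delta^{T_a\wedge\sigma_k}|f(r,Y_r)|\,dr\uparrow\int_\delta^T|f(r,Y_r)|\,dr$ and the analogous statement for $|f(\cdot,0)|$, which is integrable by (H1); item (d) of Definition~\ref{pt0} gives $Y_{T_a}\to\xi$ a.s. The conclusion then follows once one justifies the convergence $E(|Y_{T_a\wedge\sigma_k}|\,|\,\FF_\delta)\to E(|\xi|\,|\,\FF_\delta)$.

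The main technical obstacle is precisely this last passage: to interchange limit and conditional expectation in the boundary term $|Y_{T_a\wedge\sigma_k}|$ one needs uniform integrability of the family $\{Y_\tau:\tau\in\TT\}$, i.e.\ the class (D) property of $Y$, which is part of the solution framework of \cite{Kl,K7} recalled in the remark preceding the lemma. Once class (D) is in hand, uniform integrability of $\{Y_{T_a\wedge\sigma_k}\}_{a,k}$ is immediate and the limits can be taken in either order, completing the argument.
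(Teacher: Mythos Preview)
Your proof is correct and follows essentially the same route as the paper: apply the Meyer--Tanaka formula to $|Y|$, use (H2) to obtain the pointwise bound $|f(r,Y_r)|\le -\sgn(Y_r)f(r,Y_r)+2|f(r,0)|$, and then localize and take conditional expectations. The paper compresses all of your careful discussion of the nonnegative local-time/jump correction $A^a_\delta$, the localization, and the passage $a\to\infty$ into the phrase ``standard localization argument''; your explicit identification of the class~(D) property of $Y$ (inherited from the solution framework of \cite{Kl,K7}) as the ingredient needed to pass to the limit in the boundary term is exactly what the paper leaves implicit.

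One small remark: the pathwise inequality you derive actually yields the slightly sharper bound
\[
E\Big(\int_\delta^T |f(r,Y_r)|\,dr\Big|\FF_\delta\Big)\le E\Big(|\xi|-|Y_\delta|+2\int_\delta^T|f(r,0)|\,dr\Big|\FF_\delta\Big),
\]
with the factor $2$ only on the $|f(\cdot,0)|$ term rather than on the whole right-hand side as stated. The paper's proof gives the same thing; the stated form with the global factor $2$ is a harmless relaxation that suffices for all later applications.
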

\begin{proof}
By the Meyer-Tanaka formula,
\[
|Y_\delta|\le |\xi|+\int_\delta^T \mbox{sgn}(Y_r)f(r,Y_r)\,dr-\int_\delta^T \mbox{sgn}(Y_{r-})\,dM_r.
\]
By (H2),
\[
-\mbox{sgn}(Y_r)(f(r,Y_r)-f(r,0))\ge 0,
\]
which when combined with the previous inequality and standard localization argument  gives the desired result.
\end{proof}

We now introduce the notion of nonlinear expectation.
\begin{definition}
Assume \textnormal{(H1)--(H4)}. Let $\alpha,\beta\in\mathcal{T}$, $\alpha\le\beta$. We say that an operator
\[
\mathcal{E}^f_{\alpha,\beta}:L^1(\Omega,\mathcal{F}_{\beta},P)\rightarrow L^1(\Omega,\mathcal{F}_{\alpha},P)
\]
is a nonlinear $f$-expectation if
\[
\mathcal{E}^f_{\alpha,\beta}(\xi)=Y_{\alpha},
\]
 where $(Y,M)$ is the unique solution of \textnormal{BSDE}$^{\beta}(\xi,f)$.
\end{definition}

We say that a c\`adl\`ag process $X$ of class (D) is an $\EE^f$-supermartingale
(resp. $\EE^f$-submartingale) on $[\alpha,\beta]$
if $\EE^f_{\sigma,\tau}(X_\tau)\le X_\sigma$ (resp. $\EE^f_{\sigma,\tau}(X_\tau)\ge X_\sigma$) for all
$\tau,\sigma\in \TT$ such that $\alpha\le\sigma\le\tau\le\beta$.
$X$ is called an $\EE^f$-martingale on $[\alpha,\beta]$ if it is both $\EE^f$-supermartingale and $\EE^f$-submartingale on $[\alpha,\beta]$.

For a given finite variation  process $V$ and stopping times $\alpha,\beta$ ($\alpha\le \beta$) we denote  by $|V|_{\alpha,\beta}$ the total variation of the process $V$ on $[\alpha,\beta]$. By BSDE$^{\alpha,\beta}(\xi,f+dV)$ we denote the problem (a)--(d) of Definition \ref{pt0}
but with $0,T$ replaced by $\alpha,\beta$, respectively.

In Proposition \ref{stw.fexp} we gather some properties of nonlinear $f$-expectation which will be needed later on. These properties are direct consequences of  the existence, uniqueness, stability,  and comparison
theorems for BSDEs proved in  \cite{Kl}. Properties (i)-(iii)  below were proved in \cite[Proposition 5.6]{K7}.

\begin{proposition}
\label{stw.fexp}
Assume that  $f$ satisfies \mbox{\rm(H1)--(H4)}  and $\alpha,\beta\in\TT$, $ \alpha\le\beta$.
\begin{enumerate}
\item[\rm(i)] Let $\xi\in L^1(\Omega,\FF_\beta,P)$ and $V$ be an $\mathbb F$-adapted finite
variation process such that  $V_\alpha=0$ and $E|V|_{\alpha,\beta}<\infty$. Then there exists a unique solution $(X,N)$
of  BSDE$^{\alpha,\beta}(\xi,f+dV)$.
Moreover,  if $V$ (resp. $-V$) is an increasing process, then $X$ is an $\EE^f$-supermartingale (resp. $\EE^f$-submartingale) on $[\alpha,\beta]$.

\item[\rm(ii)] If  $\xi_1,\xi_2\in L^1(\Omega,\FF_\beta,P)$ and $\xi_1\le\xi_2$, then $\EE^f_{\alpha,\beta}(\xi_1)\le\EE^f_{\alpha,\beta}(\xi_2)$.

\item[\rm(iii)] If $f_1,f_2$ satisfy \mbox{\rm(H1)--(H4)} with $\mu\le 0$, $\alpha,\beta_1,\beta_2\in\TT$, $\alpha\le\beta_1\le\beta_2$, $\xi_1\in L^1(\Omega,\FF_{\beta_1},P)$, $\xi_2\in L^1(\Omega,\FF_{\beta_2},P)$, then
\begin{align*}
|\EE^{f_1}_{\alpha,\beta_1}(\xi_1)-\EE^{f_2}_{\alpha,\beta_2}(\xi_2)|&\le E\Big(|\xi_1-\xi_2|+\int_\alpha^{\beta_1}|f^1(r,Y^1_r)-f^2(r,Y^1_r)|\,dr \\ &\quad+\int_{\beta_1}^{\beta_2}|f^2(r,Y^2_r)|\,dr|\FF_\alpha\Big),
\end{align*}
where $Y^1_t=\EE^{f^1}_{t\wedge\beta_1,\beta_1}(\xi_1)$, $Y^2_t=\EE^{f^2}_{t\wedge\beta_2,\beta_2}(\xi_2)$,

\item[\rm{(iv)}] For every $A\in \FF_\alpha$,
\[
\mathbf{1}_A\EE^f_{\alpha,\beta}(\xi)=\EE^{f_A}_{\alpha,\beta}(\mathbf{1}_A\xi),
\]
where $f_A(t,y)=f(t,y)\mathbf{1}_A\mathbf{1}_{t\ge \alpha}$,

\item[\rm{(v)}] For every $\gamma\in \TT$ such that $\gamma\ge\beta$,
\[
\EE^f_{\alpha,\beta}(\xi)=\EE^{f^\beta}_{\alpha,\gamma}(\xi),
\]
where $f^\beta(t,y)=f(t,y)\mathbf{1}_{t\le\beta}$.
\end{enumerate}
\end{proposition}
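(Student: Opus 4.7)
The plan is to treat (i)--(iii) as direct consequences of the existence, uniqueness, comparison and stability theory for BSDE$^{\alpha,\beta}(\xi,f+dV)$ with monotone continuous generator and $L^1$-data developed in \cite{Kl,K7}; in fact the authors note that (i)--(iii) are established in \cite[Proposition 5.6]{K7}. For (i), existence and uniqueness for the equation with finite-variation right-hand side is quoted directly; to see that the solution is an $\EE^f$-supermartingale when $V$ is increasing, I would fix $\alpha\le\sigma\le\tau\le\beta$ and compare on $[\sigma,\tau]$ two BSDEs with common terminal value $X_\tau$: one with driver $f$ (whose value at $\sigma$ is $\EE^f_{\sigma,\tau}(X_\tau)$ by definition of the nonlinear expectation), the other with driver $f+dV$ (whose value at $\sigma$ is $X_\sigma$ by restriction and uniqueness). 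The sign of $dV$ combined with the comparison theorem of \cite{Kl} yields the desired inequality. (ii) is the comparison principle itself. For (iii), the standard path is to extend $Y^1$ past $\beta_1$ by its terminal value, apply the Meyer--Tanaka formula to $Y^1-Y^2$ on $[\alpha,\beta_2]$, and discard the nonpositive $y$-difference term via (H2), as in the proof of Lemma \ref{lm.ad3.1}.

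The genuinely new parts of the statement are (iv) and (v). For (iv), let $(Y,M)$ be the unique solution of BSDE$^{\alpha,\beta}(\xi,f)$. Since $A\in\FF_\alpha$, the indicator $\mathbf 1_A$ is $\FF_t$-measurable for every $t\in[\alpha,\beta]$, so $\tilde M_t:=\mathbf 1_A(M_t-M_\alpha)$ is a local $\mathbb F$-martingale on $[\alpha,\beta]$. The driver $f_A$ clearly inherits (H1)--(H4), and the key algebraic identity is
\[
\mathbf 1_A f(r,Y_r)=f_A(r,\mathbf 1_A Y_r),\qquad r\in[\alpha,\beta],
\]
which holds because on $A$ the substitution $Y_r\mapsto\mathbf 1_A Y_r$ does nothing, while on $A^c$ both sides vanish. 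Multiplying the BSDE for $(Y,M)$ by $\mathbf 1_A$ then shows directly that $(\mathbf 1_A Y,\tilde M)$ solves BSDE$^{\alpha,\beta}(\mathbf 1_A\xi,f_A)$; uniqueness yields $\EE^{f_A}_{\alpha,\beta}(\mathbf 1_A\xi)=\mathbf 1_A Y_\alpha=\mathbf 1_A\EE^f_{\alpha,\beta}(\xi)$.

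For (v), with the same $(Y,M)$, I would define the extensions
\[
\tilde Y_t=Y_t\mathbf 1_{[\alpha,\beta]}(t)+\xi\mathbf 1_{(\beta,\gamma]}(t),\qquad \tilde M_t=M_t\mathbf 1_{[\alpha,\beta]}(t)+M_\beta\mathbf 1_{(\beta,\gamma]}(t).
\]
Since $\xi$ is $\FF_\beta$-measurable, $\tilde Y$ is $\mathbb F$-adapted and regulated, $\tilde M\in\mathcal M_{loc}$ (being constant past $\beta$), and $f^\beta(r,\tilde Y_r)\equiv 0$ on $(\beta,\gamma]$. Using $Y_\beta=\xi$, one then checks mechanically that $(\tilde Y,\tilde M)$ solves BSDE$^{\alpha,\gamma}(\xi,f^\beta)$, and uniqueness again gives $\EE^{f^\beta}_{\alpha,\gamma}(\xi)=Y_\alpha=\EE^f_{\alpha,\beta}(\xi)$. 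The only subtle point I anticipate throughout is verifying that $f_A$ and $f^\beta$ inherit (H1)--(H4) together with the integrability requirement (b) of Definition \ref{pt0} along the candidate solutions, so that the uniqueness results of \cite{Kl,K7} invoked for (i) are genuinely applicable; in both constructions this reduces at once to Definition \ref{pt0}(b) for the original $(Y,M)$ and, if needed, Lemma \ref{lm.ad3.1}.
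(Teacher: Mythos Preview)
Your proposal is correct and matches the paper's approach: the paper does not give a detailed proof at all, merely stating that (i)--(v) are direct consequences of the existence, uniqueness, stability and comparison results in \cite{Kl}, with (i)--(iii) already recorded in \cite[Proposition 5.6]{K7}. Your explicit constructions for (iv) and (v) --- multiplying the solution by $\mathbf 1_A$ and extending it constantly past $\beta$, then invoking uniqueness --- are exactly the verifications one would carry out to substantiate the paper's claim that these follow directly from the basic theory, and your checks that $f_A$, $f^\beta$ inherit (H1)--(H4) and that the candidate pairs satisfy Definition \ref{pt0} are sound.
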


We will also need the following lemma.

\begin{lemma}
\label{lm.ad3.ilace}
Assume \mbox{\rm(H1)--(H4)}. Let $X$ be an $\mathbb F$-adapted process of class \mbox{\rm(D)} on $[0,T]$.
Then there exists a supermartingale $U$ of class \mbox{\rm(D)} on $[0,T]$ such that
\[
|\EE^f_{\alpha,\tau}(X_\tau)|\le U_\alpha,\quad \alpha\in\TT.
\]
Moreover, if $\{\delta_n\}\subset \TT$
is an increasing sequence such that $\delta_n\nearrow T$, then
\[
\lim_{n\rightarrow \infty}\sup_{\tau\in\TT}E\int_{\tau\wedge \delta_n}^\tau|f(r,\EE^f_{r,\tau}(X_\tau))|\,dr=0.
\]
\end{lemma}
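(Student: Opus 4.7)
For the first statement, let $(Y^\tau, M^\tau)$ denote the unique solution of $\mathrm{BSDE}^\tau(X_\tau, f)$ (Proposition~\ref{stw.fexp}(i) with $V=0$), so that $Y^\tau_\alpha = \EE^f_{\alpha, \tau}(X_\tau)$ for $\alpha\le\tau$. Reasoning as in the proof of Lemma~\ref{lm.ad3.1} via Meyer--Tanaka and monotonicity (H2) (which yields $\mathrm{sgn}(Y^\tau_r)f(r,Y^\tau_r)\le |f(r,0)|$), conditioning and standard localization give
\[
|Y^\tau_\alpha| \le E[|X_\tau|\,|\,\FF_\alpha] + E\!\left[\int_\alpha^T |f(r,0)|\,dr\,\Big|\,\FF_\alpha\right].
\]
Since $X$ is of class~(D), the Snell envelope $S_\alpha := \esssup_{\sigma\in\TT,\,\sigma\ge\alpha} E[|X_\sigma|\,|\,\FF_\alpha]$ of $|X|$ is a c\`adl\`ag supermartingale of class~(D). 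Combining $S$ with the uniformly integrable martingale $\alpha\mapsto E[\int_\alpha^T|f(r,0)|\,dr\,|\,\FF_\alpha]$ (finite by (H1)) yields
\[
U_\alpha := S_\alpha + E\!\left[\int_\alpha^T |f(r,0)|\,dr\,\Big|\,\FF_\alpha\right],
\]
a supermartingale of class~(D) dominating $|\EE^f_{\alpha,\tau}(X_\tau)|$ for every $\tau\ge\alpha$.

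For the second statement, I would apply Lemma~\ref{lm.ad3.1} to $\mathrm{BSDE}^\tau(X_\tau,f)$ at the stopping time $\delta_n\wedge\tau$ and take expectations, obtaining
\[
E\!\int_{\delta_n\wedge\tau}^\tau |f(r,Y^\tau_r)|\,dr \le 2\bigl(E|X_\tau|-E|Y^\tau_{\delta_n\wedge\tau}|\bigr) + 2E\!\int_{\delta_n}^T|f(r,0)|\,dr.
\]
By (H1) and dominated convergence (using $\delta_n\nearrow T$), the second term tends to $0$ uniformly in $\tau$. Since $Y^\tau_{\delta_n\wedge\tau}=X_\tau$ on $\{\tau\le\delta_n\}$, the first term reduces to $2E[(|X_\tau|-|Y^\tau_{\delta_n}|)\mathbf{1}_{\{\tau>\delta_n\}}]$.

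The chief obstacle is to show that this last quantity vanishes uniformly in $\tau\in\TT$. My approach is to exploit the pathwise bound $|Y^\tau_{\delta_n}|\le U_{\delta_n}$ from the first part (so that $\{|Y^\tau_{\delta_n}|:\tau\in\TT,\,n\ge 1\}$ inherits uniform integrability from the class~(D) of $U$), together with the BSDE identity
\[
Y^\tau_{\delta_n} = E[X_\tau\,|\,\FF_{\delta_n}] + E\!\left[\int_{\delta_n}^\tau f(r,Y^\tau_r)\,dr\,\Big|\,\FF_{\delta_n}\right]\quad\text{on }\{\tau>\delta_n\}
\]
and the uniform integrability of $\{|X_\tau|\}_{\tau\in\TT}$ coming from the class~(D) of $X$. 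Splitting $E|X_\tau|-E|Y^\tau_{\delta_n}|$ as $(E|X_\tau|-E|E[X_\tau\,|\,\FF_{\delta_n}]|)+(E|E[X_\tau\,|\,\FF_{\delta_n}]|-E|Y^\tau_{\delta_n}|)$ bounds the second summand by $E\int_{\delta_n}^\tau|f(r,Y^\tau_r)|\,dr$ and reduces the first to an $L^1$-martingale-convergence term that is uniformly controlled via the class~(D) hypothesis on $X$. Rearranging the resulting self-referential inequality gives the desired uniform bound; the delicate point is the interchange of the supremum over $\tau$ with the limit in $n$, which is the technical heart of the argument.
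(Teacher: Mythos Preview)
Your construction of $U$ in the first part is correct and essentially coincides with the paper's argument (the paper carries an inessential factor~$2$).

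The second part, however, has a genuine gap. Write $A_n(\tau)=E\int_{\tau\wedge\delta_n}^{\tau}|f(r,Y^\tau_r)|\,dr$. Your use of Lemma~\ref{lm.ad3.1} gives (with the sharp constant~$1$ on the difference term, which is what Meyer--Tanaka actually yields)
\[
A_n(\tau)\le E\big[(|X_\tau|-|Y^\tau_{\delta_n}|)\mathbf{1}_{\{\tau>\delta_n\}}\big]
+2E\int_{\delta_n}^{T}|f(r,0)|\,dr.
\]
You then split the first term and bound the piece $E[(|E[X_\tau|\FF_{\delta_n}]|-|Y^\tau_{\delta_n}|)\mathbf{1}_{\{\tau>\delta_n\}}]$ by $A_n(\tau)$ via the BSDE identity. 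But this puts $A_n(\tau)$ back on the right-hand side with coefficient~$1$ (or~$2$ if you use the paper's version of Lemma~\ref{lm.ad3.1}), so rearranging yields only the trivial inequality $0\le B_n(\tau)+2D_n$. The self-referential loop does not close. Moreover, the ``martingale convergence'' term $E[(|X_\tau|-|E[X_\tau|\FF_{\delta_n}]|)\mathbf{1}_{\{\tau>\delta_n\}}]$ is not obviously small uniformly in~$\tau$: since $\{\delta_n\}$ is merely an increasing sequence with $\delta_n\nearrow T$ (not a chain), the set $\{\tau>\delta_n\}$ need not shrink, and $\bigvee_n\FF_{\delta_n}$ need not equal~$\FF_T$.

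The paper's route avoids this trap and uses (H4), which your argument never invokes. From the bound $|Y^\tau_r|\le U_r$ and monotonicity~(H2) one has $|f(r,Y^\tau_r)|\le |f(r,U_r)|+|f(r,-U_r)|$, a $\tau$-independent majorant. By~(H4) there is a \emph{chain} $\{\tau_k\}$ with $E\int_0^{\tau_k}(|f(r,U_r)|+|f(r,-U_r)|)\,dr<\infty$. One then splits $A_n(\tau)$ at~$\tau_k$: the contribution on $[0,\tau_k]$ is dominated by $E\int_{\delta_n\wedge\tau_k}^{\tau_k}(|f(r,U_r)|+|f(r,-U_r)|)\,dr\to 0$ by dominated convergence, while the tail beyond~$\tau_k$ is controlled by Lemma~\ref{lm.ad3.1} and the class~(D) property of~$U$, giving a bound $2E[\mathbf{1}_{\{\tau_k<T\}}U_{\tau_k}]$ that is small for large~$k$ because $\{\tau_k\}$ is a chain. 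This $\varepsilon$-splitting is what replaces your self-referential step.
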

\begin{proof}
Let $\varepsilon>0$.
By Lemma \ref{lm.ad3.1}, for all $\alpha,\tau\in \TT$ such that $\alpha\le\tau$,
\begin{align}
\label{eq.ad3.2}
\nonumber|\EE^f_{\alpha,\tau}(X_\tau)|&\le 2E\big(|X_\tau|+\int_\alpha^\tau|f(r,0)|\,dr\big|\FF_\alpha\big)\\
&\le
2\esssup_{\tau\in \TT_{\alpha,T}}E\big(|X_\tau| \big|\FF_\alpha\big)
+2E\big(\int_0^T|f(r,0)|\,dr\big|\FF_\alpha\big)=:U_\alpha.
\end{align}
By \cite[page 417]{DM1}, $U$ is a process of class (D). Hence, by (H4), there exists a chain $\{\tau_k\}$ on $[0,T]$ such that
\begin{equation}
\label{eq.ad3.1}
E\int_0^{\tau_k}|f(r,U_r)|\,dr + E\int_0^{\tau_k}|f(r,-U_r)|\,dr\le k,\quad k\ge 0.
\end{equation}
By (\ref{eq.ad3.2}), (H2) and Lemma \ref{lm.ad3.1},
\begin{align*}
&E\int_{\tau\wedge \delta_n}^\tau|f(r,\EE^f_{r,\tau}(X_\tau))|\,dr\\
&\qquad \le E\int_{\tau\wedge \delta_n\wedge\tau_k}^{\tau\wedge\tau_k}|f(r,\EE^f_{r,\tau}(X_\tau))|\,dr
+E\mathbf{1}_{\{\tau_k<\tau\}}\int_{\tau_k }^{\tau}|f(r,\EE^f_{r,\tau}(X_\tau))|\,dr\\
&\qquad\le E\int_{\delta_n\wedge\tau_k}^{\tau_k}|f(r,U_r)|\,dr
+ E\int_{\delta_n\wedge\tau_k}^{\tau_k}|f(r,-U_r)|\,dr\\
&\qquad\quad+E\mathbf{1}_{\{\tau_k<\tau\}}
E\big(\int_{\tau_k }^{\tau} |f(r,\EE^f_{r,\tau}(X_\tau))|\,dr\big|\FF_{\tau_k}\big)\\
&\qquad\le E\int_{\delta_n\wedge\tau_k}^{\tau_k}|f(r,U_r)|\,dr + E\int_{\delta_n\wedge\tau_k}^{\tau_k}|f(r,-U_r)|\,dr\\
&\qquad\quad
+E\mathbf{1}_{\{\tau_k<\tau\}}\big(|X_\tau|-|\EE^f_{\tau_k,\tau}(X_\tau)|
+\int_{\tau_k}^T|f(r,0)|\,dr\big)\\
&\qquad
\le E\int_{ \delta_n\wedge\tau_k}^{\tau_k}|f(r,U_r)|\,dr + E\int_{ \delta_n\wedge\tau_k}^{\tau_k}|f(r,-U_r)|\,dr+2E\mathbf{1}_{\tau_k<T}U_{\tau_k}.
\end{align*}
Since $U$ is of class (D) and $\{\tau_k\}$ is a chain on $[0,T]$, there exists $k_0\ge 0$ such that $2E\mathbf{1}_{\{\tau_k<T\}}U_{\tau_k}\le \varepsilon,\, k\ge k_0$.
By (\ref{eq.ad3.1}) and the Lebesgue dominated convergence theorem,
\[
E\int_{ \delta_n\wedge\tau_{k_0}}^{\tau_{k_0}}|f(r,U_r)|\,dr + E\int_{ \delta_n\wedge\tau_{k_0}}^{\tau_{k_0}}|f(r,-U_r)|\,dr\rightarrow 0
\]
as $n\rightarrow\infty$.  By what has been proved,
\[
\limsup_{n\rightarrow \infty}\sup_{\tau\in\TT}E\int_{\tau\wedge \delta_n}^\tau|f(r,\EE^f_{r,\tau}(X_\tau))|\,dr\le \varepsilon.
\]
Since $\varepsilon>0$ was arbitrary, this proves the lemma.
\end{proof}

\nsubsection{Definition of a solution and a comparison result}
\label{sec4}

\begin{definition}\label{pt1}
We say that a triple $(Y,M,K)$ of $\mathbb F$-adapted processes is a solution of the reflected backward stochastic differential equation on the interval $[0,T]$ with right-hand side $f+dV$, terminal value $\xi$ and lower barrier $L$ (\underline{R}BSDE$^T(\xi,f+dV,L)$ for short) if
\begin{enumerate}
\item[(a)] $Y$ is regulated and $M\in\mathcal{M}_{loc}$,
\item[(b)] $K\in\mathcal{V}^+_p$, $L_t\le Y_t$, $t\in[0,T_a]$, $a\ge 0$, and
\[
\int^{T_a}_0(Y_{r-}-\overrightarrow{L}_r)\,dK^*_r+\sum_{0\le r<T_a}(Y_r-L_r)\Delta^+K_r=0,\quad a\ge 0,
\]
\item[(c)] $\int^{T_a}_0|f(r,Y_r)|\,dr<+\infty$ for every $a\ge 0$,
\item[(d)] for every $a\ge 0$,
\begin{align*}
Y_t=Y_{T_a}+\int^{T_a}_t f(r,Y_r)\,dr+\int^{T_a}_t\,dV_r+\int^{T_a}_t\,dK_r-\int^{T_a}_t\,dM_r,\quad t\in[0,T_a],
\end{align*}
\item[(e)] $\lim_{a\rightarrow\infty}Y_{T_a}=\xi$ a.s.
\end{enumerate}
\end{definition}

\begin{definition}
We say that a triple $(Y,M,R)$ of $\mathbb F$-adapted  processes is a solution of the reflected backward stochastic differential equation on the interval $[0,T]$ with right-hand side $f+dV$, terminal value $\xi$, lower barrier $L$ and upper barrier $U$ (RBSDE$^T(\xi,f+dV,L,U)$ for short) if
\begin{enumerate}
\item[(a)] $Y$ is regulated and $M\in\mathcal{M}_{loc}$,
\item[(b)] $R\in\mathcal{V}_p$, $L_t\le Y_t\le U_t$, $t\in[0,T_a]$, $a\ge 0$, and
\begin{align*}
&\int^{T_a}_0(Y_{r-}-\overrightarrow{L}_r)\,dR^{*,+}_r+\sum_{0\le r<T_a}(Y_r-L_r)\Delta^+R^+_r\\
&\qquad+\int^{T_a}_0(\underrightarrow{U}_r-Y_{r-})\,dR^{*,-}_r+\sum_{0\le r<T_a}(U_r-Y_r)\Delta^+R^-_r=0,\quad a\ge 0,
\end{align*}
where $R=R^+-R^-$ is the Jordan decomposition of $R$,
\item[(c)] $\int^{T_a}_0|f(r,Y_r)|\,dr<+\infty$ for every $a\ge 0$,
\item[(d)] for every $a\ge 0$,
\begin{align*}
Y_t=Y_{T_a}+\int^{T_a}_t f(r,Y_r)\,dr+\int^{T_a}_t\,dV_r+\int^{T_a}_t\,dR_r-\int^{T_a}_t\,dM_r,\quad t\in[0,T_a],
\end{align*}
\item[(e)] $\lim_{a\rightarrow\infty}Y_{T_a}=\xi$ a.s.
\end{enumerate}
\end{definition}

\begin{proposition}
\label{sob4}
Let $(Y^i,M^i,R^i)$ be a solution of \textnormal{RBSDE}$^T(\xi^i,f^i+dV^i,L^i,U^i)$, $i=1,2$. Assume that $f^1$ satisfies \textnormal{(H2)} and $\xi^1\le\xi^2$, $f^1(\cdot,Y^2)\le f^2(\cdot,Y^2)$ $dt\otimes dP$-a.s., $dV^1\le dV^2$, $L^1\le L^2$, $U^1\le U^2$. If $(Y^1-Y^2)^+$ is of class \textnormal{(D)}, then $Y^1\le Y^2$.
\end{proposition}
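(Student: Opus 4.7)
The plan is to apply a Meyer--Tanaka-type inequality for regulated semimartingales (as used in the proof of Lemma~\ref{lm.ad3.1}) to $(\bar{Y})^+$, where $\bar{Y}=Y^1-Y^2$, on each interval $[t,T_a]$. Writing $\bar{V}=V^1-V^2$, $\bar{R}=R^1-R^2$, $\bar{M}=M^1-M^2$, decomposing $\bar{V}=\bar{V}^*+\bar{V}^d$, $\bar{R}=\bar{R}^*+\bar{R}^d$, and using that the left-jump corrections and the local time at zero are nonnegative (so they may be discarded when writing $(\bar{Y}_t)^+$ as an upper bound in terms of $(\bar{Y}_{T_a})^+$), together with the elementary inequality $(a+b)^+-a^+\ge \mathbf{1}_{\{a>0\}}b$ applied to the right-jumps, one obtains
\begin{align*}
(\bar{Y}_t)^+ \le (\bar{Y}_{T_a})^+ &+\int_t^{T_a}\mathbf{1}_{\{\bar{Y}_{r-}>0\}}\bigl(f^1(r,Y^1_r)-f^2(r,Y^2_r)\bigr)\,dr\\
&+\int_t^{T_a}\mathbf{1}_{\{\bar{Y}_{r-}>0\}}\,d(\bar{V}^*+\bar{R}^*)_r\\
&+\sum_{t\le r<T_a}\mathbf{1}_{\{\bar{Y}_r>0\}}\Delta^+(\bar{V}+\bar{R})_r
-\int_t^{T_a}\mathbf{1}_{\{\bar{Y}_{r-}>0\}}\,d\bar{M}_r.
\end{align*}

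For the generator contribution, split $f^1(r,Y^1_r)-f^2(r,Y^2_r)=(f^1(r,Y^1_r)-f^1(r,Y^2_r))+(f^1(r,Y^2_r)-f^2(r,Y^2_r))$; on $\{\bar{Y}_r>0\}$ the first bracket is nonpositive by (H2) and the second by hypothesis. The $\bar{V}$-contributions (c\`adl\`ag part and right jumps) are nonpositive because the hypothesis $dV^1\le dV^2$ forces both $d\bar{V}^*\le 0$ and $\Delta^+\bar{V}\le 0$.

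The main obstacle is the reflection term. The key point is that on $\{\bar{Y}_{r-}>0\}$, using the barrier constraints $Y^2\ge L^2$ and $Y^1\le U^1$ together with the regularity of the processes from the left and the monotonicity $L^1\le L^2$, $U^1\le U^2$, one obtains
\[
Y^1_{r-}>Y^2_{r-}\ge \overrightarrow{L^2}_r\ge \overrightarrow{L^1}_r,\qquad Y^2_{r-}<Y^1_{r-}\le \underrightarrow{U^1}_r\le \underrightarrow{U^2}_r.
\]
The minimality identity in the definition of a solution forces $dR^{1,*,+}$ to be supported on $\{Y^1_{r-}=\overrightarrow{L^1}_r\}$, so $\int\mathbf{1}_{\{\bar{Y}_{r-}>0\}}\,dR^{1,*,+}_r=0$, and analogously $\int\mathbf{1}_{\{\bar{Y}_{r-}>0\}}\,dR^{2,*,-}_r=0$. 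At right-jump times, on $\{\bar{Y}_r>0\}$ one has $Y^1_r>L^1_r$ and $Y^2_r<U^2_r$, and the jump-parts of the minimality conditions eliminate the contributions of $\Delta^+R^{1,+}$ and $\Delta^+R^{2,-}$. What remains of $d\bar{R}^*$ and $\Delta^+\bar{R}$ against $\mathbf{1}_{\{\bar{Y}_{r-}>0\}}$, $\mathbf{1}_{\{\bar{Y}_r>0\}}$ are only the terms involving $-R^{1,-}$ and $-R^{2,+}$, which are nonpositive. Hence the whole reflection term is $\le 0$.

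Combining the above, $(\bar{Y}_t)^+\le (\bar{Y}_{T_a})^+-\int_t^{T_a}\mathbf{1}_{\{\bar{Y}_{r-}>0\}}\,d\bar{M}_r$. A standard localization argument using a localizing sequence for the local martingale, together with the class (D) assumption on $(\bar{Y})^+$ (which provides the uniform integrability needed to pass to the limit in the stopped process), yields $(\bar{Y}_t)^+\le E((\bar{Y}_{T_a})^+\mid\FF_t)$. Finally, the terminal condition together with $\xi^1\le\xi^2$ gives $(\bar{Y}_{T_a})^+\to 0$ a.s.\ as $a\to\infty$, and class (D) permits passing the limit under the conditional expectation. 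The conclusion $(\bar{Y}_t)^+=0$, i.e.\ $Y^1\le Y^2$, then follows.
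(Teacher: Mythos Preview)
Your proposal is correct and follows essentially the same approach as the paper: both apply a Meyer--Tanaka inequality for $(\bar Y)^+=(Y^1-Y^2)^+$ (the paper cites \cite[Corollary A.5]{KRzS} for it, whereas you sketch its derivation), handle the generator, $V$, and reflection contributions in the same way via (H2), $dV^1\le dV^2$, and the minimality conditions, and conclude by localization, class (D), and passage to the limit $a\to\infty$. One small point: the paper works with an arbitrary stopping time $\sigma$ rather than a deterministic $t$ and then invokes the Section Theorem to pass from $E(Y^1_\sigma-Y^2_\sigma)^+=0$ for all $\sigma\in\TT$ to $Y^1\le Y^2$ as processes; your version with deterministic $t$ would need this step (or a regulated-path argument) spelled out to be complete.
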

\begin{proof}
By (H2) and the fact that $f^1(\cdot,Y^2)\le f^2(\cdot,Y^2)$ $dt\otimes dP$-a.s. we have
\begin{equation}\label{pon1}
\begin{split}
\mathbf{1}_{\{Y^1_r>Y^2_r\}}(f^1(r,Y^1_r)-f^2(r,Y^2_r))\le \mathbf{1}_{\{Y^1_r>Y^2_r\}}(f^1(r,Y^1_r)-f^1(r,Y^2_r))\le 0.
\end{split}
\end{equation}\label{pon2}
By the minimality condition for $R^1,R^2$ and the assumption that $L^1\le L^2$ and $U^1\le U^2$,
\begin{equation}
\mathbf{1}_{\{Y^1_{r-}>Y^2_{r-}\}}d(R^1_r-R^2_r)^*
\le\mathbf{1}_{\{Y^1_{r-}>Y^2_{r-}\}}\,dR^{1,*,+}
+\mathbf{1}_{\{Y^1_{r-}>Y^2_{r-}\}}\,dR^{2,-,*}=0
\end{equation}
and
\begin{equation}\label{pon3}
\mathbf{1}_{\{Y^1_r>Y^2_r\}}\Delta^+(R^1_r-R^2_r)\le\mathbf{1}_{\{Y^1_r>Y^2_r\}}\Delta^+R_r^{1,+}+\mathbf{1}_{\{Y^1_r>Y^2_r\}}\Delta^+R_r^{2,-}=0.
\end{equation}
By \cite[Corollary A.5]{KRzS}, for all $a\ge0$ and stopping times $\sigma,\tau\in\mathcal{T}_{T_a}$ such that $\sigma\le\tau$ we have
\[
\begin{split}
(Y^1_{\sigma}-Y^2_{\sigma})^+&\le (Y^1_{\tau}-Y^2_{\tau})^++\int^{\tau}_{\sigma}\mathbf{1}_{\{Y^1_r>Y^2_r\}}
(f^1(r,Y^1_r)-f^2(r,Y^2_r))\,dr\\
&\quad+\int^{\tau}_{\sigma}\mathbf{1}_{\{Y^1_{r-}>Y^2_{r-}\}}\,d(V^1_r-V^2_r)^*
+\sum_{\sigma\le r<\tau}\int^{\tau}_{\sigma}\mathbf{1}_{\{Y^1_r>Y^2_r\}}\Delta^+(V^1_r-V^2_r)\\
&\quad+\int^{\tau}_{\sigma}\mathbf{1}_{\{Y^1_{r-}>Y^2_{r-}\}}\,d(R^1_r-R^2_r)^*
+\sum_{\sigma\le r<\tau}\int^{\tau}_{\sigma}\mathbf{1}_{\{Y^1_r>Y^2_r\}}\Delta^+(R^1_r-R^2_r)\\
&\quad-\int^{\tau}_{\sigma}\mathbf{1}_{\{Y^1_{r-}>Y^2_{r-}\}}\,d(M^1_r-M^2_r).
\end{split}
\]
By the above inequality, (\ref{pon1})--(\ref{pon3}) and the assumption that $dV_1\le dV_2$, we have
\begin{equation}\label{pon4}
(Y^1_{\sigma}-Y^2_{\sigma})^+\le(Y^1_{\tau}-Y^2_{\tau})^+-\int^{\tau}_{\sigma}\mathbf{1}_{\{Y^1_{r-}>Y^2_{r-}\}}\,d(M^1_r-M^2_r)
\end{equation}
Let $\{\tau_k\}$ be a localizing sequence, on $[0,T_a]$, for the local martingale $M^1-M^2$. By (\ref{pon4}) with $\tau$ replaced by $\tau_k\ge\sigma$, we get
\begin{equation*}
(Y^1_{\sigma}-Y^2_{\sigma})^+\le(Y^1_{\tau_k}-Y^2_{\tau_k})^+-\int^{\tau_k}_{\sigma}\mathbf{1}_{\{Y^1_{r-}>Y^2_{r-}\}}\,d(M^1_r-M^2_r),\, k\in\mathbb{N}.
\end{equation*}
Taking the expectation and then letting $k\rightarrow\infty$ we obtain $E(Y^1_{\sigma}-Y^2_{\sigma})^+\le(Y^1_{T_a}-Y^2_{T_a})^+$ for $a\ge 0$. Letting $a\rightarrow\infty$ yields $E(Y^1_{\sigma}-Y^2_{\sigma})^+=E(\xi^1-\xi^2)^+=0$. Therefore, by the Section Theorem (see, e.g., \cite[Chapter IV, Theorem 86]{dm}), $(Y^1_t-Y^2_t)^+=0$, $t\in[0,T]$. Hence $Y^1\le Y^2$.
\end{proof}

\nsubsection{Existence of a solution with one reflecting barrier}
\label{sec5}

We will need the following additional assumption.
\begin{enumerate}
\item[(H5)] There exists a process $X$ such that $L\le X$, $E\int^T_0 f^-(r,X_r)\,dr<\infty$ and $X$ is a difference of two supermartingales of class (D) on $[0,T]$.
\end{enumerate}

\subsubsection{Snell envelope of an optional process of class (D)}

In this section, we  recall some properties of Snell envelope of an optional process of class (D).
In the whole section, we assume that $L$ is an optional process of class (D) defined on $[0,T]$.
Given $\alpha\in\TT$, we set
\[
Y(\alpha)=\esssup_{\tau\ge\alpha}E(L_\tau|\FF_\alpha).
\]
Let $M_t=E(L_T|\FF_t)$ and $\hat L= L-M$.  We set
\[
\hat Y(\alpha):=\esssup_{\tau\ge\alpha}E(\hat L_\tau|\FF_\alpha).
\]
Observe that   $\hat Y=Y-M$. Moreover, $\hat L_T=0$.  By \cite[page 417]{DM1}, there exists a positive supermartingale $\hat Y$ of class (D) on $[0,T]$
such that for every $\alpha\in \TT$,
\[
\hat Y(\alpha)=\hat Y_\alpha.
\]
Therefore $Y:= \hat Y+M$ is also a supermartingale of class (D) on $[0,T]$ and for every $\alpha\in\TT$,
\[
Y(\alpha)=Y_\alpha.
\]
By \cite[page 417]{DM1}, $Y$ is the smallest supermartingale majorizing $L$.

\begin{lemma}
\label{lm.ad5.1}
For all $\alpha, \sigma\in \TT$ such that $\alpha\le \sigma$,
\[
Y_\alpha=\esssup_{\tau\in \TT_{\alpha,\sigma}}E(L_{\tau}\mathbf{1}_{\tau<\sigma}
+Y_\sigma\mathbf{1}_{\tau=\sigma}|\FF_\alpha).
\]
\end{lemma}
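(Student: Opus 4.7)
The plan is to establish the two inequalities separately, exploiting respectively optional sampling of the supermartingale $Y$ and the upward directedness of the family defining $Y_\sigma$.

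For the inequality $Y_\alpha\le\esssup_{\tau\in \TT_{\alpha,\sigma}}E(L_{\tau}\mathbf{1}_{\tau<\sigma}+Y_\sigma\mathbf{1}_{\tau=\sigma}|\FF_\alpha)$, I would fix an arbitrary $\tau'\in\TT_\alpha$ and set $\tau:=\tau'\wedge\sigma\in\TT_{\alpha,\sigma}$. Splitting according to $\{\tau'<\sigma\}$ and $\{\tau'\ge\sigma\}$ and noting that $\tau=\tau'$ on the first set and $\tau=\sigma$ on the second, I would use $L\le Y$ together with optional sampling applied to the supermartingale $Y$ at the stopping times $\sigma\le \tau'\vee\sigma$ to obtain
\[
E(L_{\tau'}\mathbf{1}_{\tau'\ge\sigma}|\FF_\sigma)\le \mathbf{1}_{\tau'\ge\sigma}E(Y_{\tau'\vee\sigma}|\FF_\sigma)\le Y_\sigma\mathbf{1}_{\tau=\sigma}.
\]
Conditioning on $\FF_\alpha$ yields $E(L_{\tau'}|\FF_\alpha)\le E(L_\tau\mathbf{1}_{\tau<\sigma}+Y_\sigma\mathbf{1}_{\tau=\sigma}|\FF_\alpha)$, and taking the essential supremum over $\tau'\in\TT_\alpha$ gives the first inequality.

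For the converse inequality, I would fix $\tau\in\TT_{\alpha,\sigma}$ and use the standard max-pasting argument to show that the family $\{E(L_\rho|\FF_\sigma):\rho\in\TT_\sigma\}$ is upward directed; hence one can select $\{\rho_n\}\subset\TT_\sigma$ with $E(L_{\rho_n}|\FF_\sigma)\nearrow Y_\sigma$. The pasted random time $\tilde\tau_n:=\tau\mathbf{1}_{\tau<\sigma}+\rho_n\mathbf{1}_{\tau=\sigma}$ is a stopping time in $\TT_\alpha$ (since $\{\tau=\sigma\}\in\FF_\sigma$), so by the very definition of $Y_\alpha$,
\[
Y_\alpha\ge E(L_{\tilde\tau_n}|\FF_\alpha)=E\bigl(L_\tau\mathbf{1}_{\tau<\sigma}+E(L_{\rho_n}|\FF_\sigma)\mathbf{1}_{\tau=\sigma}|\FF_\alpha\bigr).
\]
Passing to the limit $n\to\infty$ via monotone convergence for conditional expectations (legitimate since $Y_\sigma$ is integrable as $Y$ is of class (D)) and then taking the essential supremum over $\tau\in\TT_{\alpha,\sigma}$ yields the reverse inequality.

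The main obstacle is the reverse direction, specifically verifying the upward directedness of the family $\{E(L_\rho|\FF_\sigma):\rho\in\TT_\sigma\}$ and safely interchanging limit and conditional expectation; both are routine, the first using indicator decompositions based on events in $\FF_\sigma$, the second relying on the integrability built into the class (D) assumption on $L$ (hence on $Y$).
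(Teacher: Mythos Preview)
Your proof is correct. The first inequality (\(Y_\alpha\le\esssup\)) follows the same idea as the paper: both rely on \(L\le Y\) and optional sampling for the supermartingale \(Y\), and your truncation \(\tau=\tau'\wedge\sigma\) is exactly the paper's passage from \(\esssup_{\tau\ge\alpha}\) to \(\esssup_{\tau\in\TT_{\alpha,\sigma}}\).

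For the reverse inequality, your route via upward directedness and pasted stopping times \(\tilde\tau_n\) is valid but considerably more elaborate than the paper's argument. The paper observes directly that for \(\tau\in\TT_{\alpha,\sigma}\) one has \(L_\tau\mathbf{1}_{\tau<\sigma}+Y_\sigma\mathbf{1}_{\tau=\sigma}\le Y_\tau\) pointwise (since \(L\le Y\) and \(Y_\sigma=Y_\tau\) on \(\{\tau=\sigma\}\)), whence \(E(L_\tau\mathbf{1}_{\tau<\sigma}+Y_\sigma\mathbf{1}_{\tau=\sigma}|\FF_\alpha)\le E(Y_\tau|\FF_\alpha)\le Y_\alpha\) by the supermartingale property. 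This one-line bound replaces your approximation \(E(L_{\rho_n}|\FF_\sigma)\nearrow Y_\sigma\), the verification that \(\tilde\tau_n\) is a stopping time, and the passage to the limit. Your argument buys nothing extra here, though the pasting technique you use is the right tool in situations where such a pointwise domination by \(Y_\tau\) is not available.
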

\begin{proof}
By the considerations preceding Lemma \ref{lm.ad5.1} we may and will assume that $Y$ is positive.  By the fact that $Y$ is a supermartingale and $L\le Y$,
\begin{align}
\label{eq.ad5.1}
\nonumber Y_\alpha&=\esssup_{\tau\ge \alpha}E(L_\tau\mathbf{1}_{\tau<\sigma}+L_\tau \mathbf{1}_{\tau\ge \sigma}|\FF_\alpha)
=\esssup_{\tau\ge \alpha}E(L_\tau\mathbf{1}_{\tau<\sigma}+ \mathbf{1}_{\tau\ge \sigma}E(L_\tau|\FF_\sigma)|\FF_\alpha)
\nonumber\\&\le  \esssup_{\tau\ge \alpha}E(L_\tau\mathbf{1}_{\tau<\sigma}+ \mathbf{1}_{\tau\ge \sigma}E(Y_\tau|\FF_\sigma)|\FF_\alpha)
\le \esssup_{\tau\ge \alpha}E(L_\tau\mathbf{1}_{\tau<\sigma}+ \mathbf{1}_{\tau\ge \sigma}Y_\sigma|\FF_\alpha) \nonumber\\&
= \esssup_{\tau\in\TT_{\alpha,\sigma}}E(L_\tau\mathbf{1}_{\tau<\sigma}+ \mathbf{1}_{\tau= \sigma}Y_\sigma|\FF_\alpha).
\end{align}
Since   $Y$ is positive, for every $\tau\in\TT_{\alpha,\sigma}$ we have
\begin{align*}
E(L_{\tau}\mathbf{1}_{\tau<\sigma}+Y_\sigma\mathbf{1}_{\tau=\sigma}|\FF_\alpha)&\le E(Y_{\tau}\mathbf{1}_{\tau<\sigma}+Y_\tau\mathbf{1}_{\tau=\sigma}|\FF_\alpha)\\&
=E(Y_\tau\mathbf{1}_{\tau\le \sigma}|\FF_\alpha)\le E(Y_\tau|\FF_\alpha)\le Y_\alpha.
\end{align*}
Hence
\[
\esssup_{\tau\in\TT_{\alpha,\sigma}}
E(L_{\tau}\mathbf{1}_{\tau<\sigma}+Y_\sigma\mathbf{1}_{\tau=\sigma}|\FF_\alpha)\le Y_\alpha,
\]
which when combined with (\ref{eq.ad5.1}) proves the lemma.
\end{proof}

\subsubsection{Existence results for linear equations }

\begin{theorem}\label{th1}
Assume that $f$ is independent of $y$. Assume also that $\xi\in L^1$, $V\in\mathcal{V}^1$ and that $E\int^T_0|f(r)|\,dr<\infty$. Then there exists a unique solution $(Y,M,K)$ of \textnormal{\underline{R}BSDE}$^T(\xi,f+dV,L)$, such that $Y$ is of class \textnormal{(D)}. Moreover,
for every $\alpha\in\TT$,
\end{theorem}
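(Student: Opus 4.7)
The plan is to reduce the problem to an optional Snell envelope by absorbing the drift. Set
\[
A_t := \int_0^t f(r)\,dr + V_t, \qquad \bar L_t := L_t + A_t, \qquad \bar\xi := \xi + A_T,
\]
and note that for any candidate triple $(Y,M,K)$ the substitution $\bar Y := Y + A$ converts condition (d) of Definition \ref{pt1} into the supermartingale identity $\bar Y_t = \bar Y_{T_a} + \int_t^{T_a}\,dK_r - \int_t^{T_a}\,dM_r$, i.e.\ $\bar Y = \bar Y_0 + M - K$ with $M\in\MM$ and $K\in\VV^+_p$. Because $A$ is regulated and adapted, we have the pointwise identities
\[
\bar Y_{r-}-\overrightarrow{\bar L}_r = Y_{r-}-\overrightarrow L_r,\qquad \bar Y_r-\bar L_r = Y_r-L_r,
\]
so the barrier inequality and the minimality condition in (b) are preserved by the change of variable.

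For existence, define
\[
\bar Y_\alpha := \esssup_{\tau\in\TT_\alpha} E\bigl(\bar L_\tau\mathbf{1}_{\tau<T} + \bar\xi\mathbf{1}_{\tau=T}\,\big|\,\FF_\alpha\bigr),\qquad \alpha\in\TT.
\]
The hypotheses $\xi\in L^1$, $V\in\VV^1$, $E\int_0^T|f(r)|\,dr<\infty$ together with $L$ of class (D) make the family $\{\bar L_\tau\mathbf{1}_{\tau<T}+\bar\xi\mathbf{1}_{\tau=T}:\tau\in\TT\}$ uniformly integrable, so by \cite[p.~417]{DM1} this essential supremum admits an optional, regulated modification of class (D) which is the smallest strong supermartingale majorising the augmented obstacle. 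Applying the Mertens decomposition yields a uniformly integrable martingale $M$ with $M_0=0$ and a predictable, increasing, regulated process $K$ with $K_0=0$ such that $\bar Y = \bar Y_0 + M - K$, and we then set $Y:=\bar Y - A$.

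Verifying clauses (a)--(e) of Definition \ref{pt1} for the triple $(Y,M,K)$ is routine apart from minimality: regulatedness of $Y$, $M\in\MM_{loc}$, $K\in\VV^+_p$ and $Y\ge L$ are immediate, the equation in (d) is obtained by substituting $Y=\bar Y - A$ into the supermartingale identity for $\bar Y$, and the terminal condition $Y_{T_a}\to\xi$ follows from $\bar Y_{T_a}\to\bar\xi$ combined with (\ref{intr13}). The minimality condition is the delicate point: we invoke the standard characterisation of the Mertens decomposition of the Snell envelope of an optional process (as used in \cite{KRzS,GIOQ}), namely that $d\bar K^*$ is carried by $\{\bar Y_-=\overrightarrow{\bar L}\}$ and $\Delta^+\bar K$ is carried by $\{\bar Y=\bar L\}$, and then translate via the identities already noted. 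Uniqueness is a direct consequence of Proposition \ref{sob4} applied with $U\equiv+\infty$ (so that $R^-\equiv 0$): two solutions satisfy $Y^1\le Y^2$ and $Y^2\le Y^1$, hence $Y^1=Y^2$, and then $M$ and $K$ are determined by equation (d) together with the predictability of $K$ via the Mertens uniqueness.

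The representation formula for $Y_\alpha$ follows immediately from Lemma \ref{lm.ad5.1} applied to $\bar Y$ with $\sigma=T$, by subtracting $A_\alpha$ on both sides and using $\bar L_\tau-A_\alpha=L_\tau+\int_\alpha^\tau f(r)\,dr+V_\tau-V_\alpha$. The main obstacle in this program is the careful verification of the minimality condition: because $L$ is only optional and $V$ may carry right jumps, the classical c\`adl\`ag Snell envelope theory does not apply directly, and one must rely on the full Mertens/Maingueneau decomposition of strong optional supermartingales of class (D) to recover both the $dK^*$-support property on $\{Y_-=\overrightarrow L\}$ and the $\Delta^+K$-support property on $\{Y=L\}$.
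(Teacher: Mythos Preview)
Your approach is essentially the paper's own: absorb the drift via $\bar Y=Y+A$, recognise $\bar Y$ as the Snell envelope of the shifted obstacle, apply the Mertens decomposition, and read off minimality from the characterisation in \cite{GIOQ} (the paper additionally invokes Lemma~\ref{lm.ad5.1} to localise to $[0,T_a]$ before citing \cite[Lemmas 3.2--3.3]{GIOQ}). The one place where you are looser than the paper is the terminal condition: you assert $\bar Y_{T_a}\to\bar\xi$ and invoke (\ref{intr13}), but on $\{T=\infty\}$ this convergence is not automatic from the Snell envelope construction, and (\ref{intr13}) by itself does not obviously supply the upper bound $\limsup_{a\to\infty}\bar Y_{T_a}\le\bar\xi$. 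The paper handles this in two steps: the inequality $\limsup_{a\to\infty}Y_{T_a}\le\xi$ is obtained from \cite[Lemma~3.8]{Kl}, while $\liminf_{a\to\infty}Y_{T_a}\ge\xi$ comes from comparing $Y$ with the first component of the unreflected BSDE$^T(\xi,f+dV)$, whose terminal behaviour is known. Your invocation of Lemma~\ref{lm.ad5.1} with $\sigma=T$ for the representation formula is harmless but redundant: the formula is immediate from your definition of $\bar Y$ after subtracting $A_\alpha$.
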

\begin{equation*}
Y_\alpha=\esssup_{\tau\in\mathcal{T}_{\alpha}}E\Big(\int^{\tau}_\alpha f(r)\,dr+\int^{\tau}_\alpha\,dV_r+L_{\tau}\mathbf{1}_{\{\tau<T\}}+\xi\mathbf{1}_{\{\tau=T\}}|\mathcal{F}_\alpha\Big).
\end{equation*}
\begin{proof}
For a stopping time $\alpha\in\mathcal{T}$, we put
\begin{equation}\label{th1.1}
Y_{\alpha}=\esssup_{\tau\in\mathcal{T}_{\alpha}}E\Big(\int^{\tau}_{\alpha} f(r)\,dr+\int^{\tau}_{\alpha}\,dV_r+L_{\tau}\mathbf{1}_{\{\tau<T\}}
+\xi\mathbf{1}_{\{\tau=T\}}|\mathcal{F}_{\alpha}\Big).
\end{equation}
Observe that
\begin{align}
\label{sob5}
S_{\alpha}:=Y_{\alpha}+\int^{\alpha}_0f(r)\,dr+\int^{\alpha}_0\,dV_r
&=\esssup_{\tau\in\mathcal{T}_{\alpha}}E\Big(\int^{\tau}_0 f(r)\,dr
+\int^{\tau}_0\,dV_r\nonumber\\
&\quad+L_{\tau}\mathbf{1}_{\{\tau<T\}}
+\xi\mathbf{1}_{\{\tau=T\}}|\mathcal{F}_{\alpha}\Big)
\end{align}
is the Snell envelope of the process $\hat{L}_t:=\int^t_0 f(r)\,dr+\int^t_0\,dV_r+L_t\mathbf{1}_{\{t<T\}}+\xi\mathbf{1}_{\{t\wedge T=T\}}$. By \cite[page 417]{DM1}, $S$ is a supermartingale of class (D).  By the Mertens decomposition theorem (see \cite{Mertens}),  there exist $K\in\mathcal{V}^+_p$ and $M\in\mathcal{M}_{loc}$ such that
\begin{equation}\label{sob6}
S_t=S_0-\int^t_0\,dK_r+\int^t_0\,dM_r,\quad t\le T.
\end{equation}
Since $S$ is of class (D) and $Y=S-\int^{\cdot}_0 f(r)\,dr-\int^{\cdot}_0\,dV_r$, $Y$ is a regulated process of class (D). Moreover, by (\ref{sob5}), $Y\ge L$, and from (\ref{sob6}) it follows that for every $a\ge 0$,
\begin{align*}
Y_t=Y_{T_a}+\int^{T_a}_t f(r)\,dr+\int^{T_a}_t\,dV_r+\int^{T_a}_t\,dK_r-\int^{T_a}_t\,dM_r,\quad t\in[0,T_a].
\end{align*}
We will show that $(Y,M,K)$ is a solution of \textnormal{\underline{R}BSDE}$^T(\xi,f+dV,L)$. To check this it remains to show the minimality condition (b) and condition (d) of Definition \ref{pt1} are satisfied. Applying Lemma \ref{lm.ad5.1} and  \cite[Lemma 3.2, Lemma 3.3]{GIOQ} on each interval $[0,T_a]$ we show that $K\in\mathcal{V}^+_p$ satisfies (b).  By \cite[Lemma 3.8]{Kl}, we have
\begin{equation}\label{sob1}
\lim_{a\rightarrow\infty}Y_{T_a}\le\xi.
\end{equation}
On the other hand, for $\tau=T$,
\[
Y_\alpha\ge E\Big(\int^T_\alpha f(r)\,dr+\int^T_\alpha\,dV_r+\xi|\mathcal{F}_\alpha\Big).
\]
From \cite[Remark 2.2]{Kl} we know that the right-hand side of the above inequality is the first component of the solution to BSDE$^T(\xi,f+dV)$. Hence
\begin{equation}\label{sob3}
\lim_{a\rightarrow\infty}Y_{T_a}\ge\xi.
\end{equation}
By (\ref{sob1}) and (\ref{sob3}), $\lim_{a\rightarrow\infty}Y_{T_a}=\xi$, which proves that
$(Y,M,K)$ is a solution of \textnormal{\underline{R}BSDE}$^T(\xi,f+dV,L)$. Uniqueness follows from Proposition \ref{sob4}.
\end{proof}

\subsubsection{A priori estimates and Snell envelope representation for solutions to nonlinear RBSDEs}

For an $\mathbb F$-adapted regulated process $Y$, we set
\[
f_Y(t)=f(t,Y_t),\,t\in[0,T].
\]
\begin{proposition}\label{prop2}
Let $(Y,M,K)$ be a solution of \textnormal{\underline{R}BSDE}$^T(\xi,f+dV,L)$ such that $Y$ is of class \mbox{\rm(D)} and $E\int^T_0|f(r,Y_r)|\,dr<\infty$. Assume also that $\xi\in L^1$ and $V\in\mathcal{V}^1$. Then for every $\alpha\in\mathcal{T}$,
\[
Y_{\alpha}=\esssup_{\tau\in\mathcal{T}_ {\alpha}}E\Big(\int^{\tau}_{\alpha} f(r,Y_r)\,dr+\int^{\tau}_{\alpha}\,dV_r+L_{\tau}\mathbf{1}_{\{\tau<T\}}
+\xi\mathbf{1}_{\{\tau=T\}}|\mathcal{F}_{\alpha}\Big).
\]
\end{proposition}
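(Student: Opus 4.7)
My plan is to reduce the proposition to the linear case established in Theorem \ref{th1} by freezing $Y$ inside the generator. Define $g(\omega,r) := f(\omega,r,Y_r(\omega))$. Since $Y$ is regulated and $\mathbb{F}$-adapted, it is $\mathbb{F}$-progressively measurable; combined with the continuity of $y\mapsto f(t,y)$ from (H3) and the progressive measurability of $(\omega,t)\mapsto f(\omega,t,y)$ for each fixed $y$, a standard composition argument shows that $g$ is $\mathbb{F}$-progressively measurable. The hypothesis $E\int^T_0|f(r,Y_r)|\,dr<\infty$ translates to $E\int^T_0|g(r)|\,dr<\infty$, so $g$ meets the driver hypotheses of Theorem \ref{th1}.

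Next I verify that $(Y,M,K)$ is itself a solution of the reflected BSDE with linear data $(\xi,g+dV,L)$. All five conditions (a)--(e) of Definition \ref{pt1} transfer verbatim once $f$ is replaced by $g$: the regulated character of $Y$ and the local martingale property of $M$ are unchanged; the obstacle $L\le Y$ together with the minimality condition on $K$ is unchanged; the local finiteness in (c) follows from the global integrability of $g$; the backward equation in (d) becomes literally the same after the substitution $g(r)=f(r,Y_r)$; and the terminal condition (e) is untouched.

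Now I apply Theorem \ref{th1} to the linear data $(\xi,g+dV,L)$, which is legitimate since $\xi\in L^1$ and $V\in\mathcal{V}^1$ by assumption. The theorem produces a unique class-(D) solution whose first coordinate is given by the Snell envelope
\[
\esssup_{\tau\in\mathcal{T}_{\alpha}}E\Big(\int^{\tau}_{\alpha} g(r)\,dr+\int^{\tau}_{\alpha}\,dV_r+L_{\tau}\mathbf{1}_{\{\tau<T\}}+\xi\mathbf{1}_{\{\tau=T\}}|\mathcal{F}_{\alpha}\Big).
\]
Since $Y$ is of class (D) and, by the previous step, solves the same linear reflected BSDE, uniqueness forces $Y_{\alpha}$ to coincide with this expression; substituting back $g(r)=f(r,Y_r)$ yields the proposition. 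I do not foresee any real obstacle on this route: the nonlinearity of $f$ is neutralised by the freezing, and the only mildly delicate point, the $\mathbb{F}$-progressive measurability of $g$, is handled by (H3) together with the regularity and adaptedness of $Y$.
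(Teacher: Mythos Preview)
Your proposal is correct and follows exactly the route taken in the paper: freeze $Y$ in the driver to obtain the linear data $f_Y(r)=f(r,Y_r)$, observe that $(Y,M,K)$ solves \underline{R}BSDE$^T(\xi,f_Y+dV,L)$, and then invoke the existence/uniqueness and Snell-envelope representation from Theorem~\ref{th1}. One minor remark: you appeal to (H3) to justify progressive measurability of $g(r)=f(r,Y_r)$, but (H3) is not among the hypotheses of this proposition; in fact the measurability of $r\mapsto f(r,Y_r)$ is already implicit in conditions (c) and (d) of Definition~\ref{pt1}, so this step is superfluous rather than a gap.
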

\begin{proof}
It is clear that $(Y,M,K)$ is a solution of \textnormal{\underline{R}BSDE}$^T(\xi,f_Y+dV,L)$. Set
\begin{equation}\label{prop2.1}
\bar{Y}_{\alpha}=\esssup_{\tau\in\mathcal{T}_{\alpha}}E\Big(\int^{\tau}_{\alpha} f_Y(r)\,dr+\int^{\tau}_{\alpha}\,dV_r+L_{\tau}\mathbf{1}_{\{\tau<T\}}+\xi\mathbf{1}_{\{\tau=T\}}|\mathcal{F}_{\alpha}\Big).
\end{equation}
By Theorem \ref{th1} and (\ref{th1.1}), $\bar{Y}$ is the first component of the solution to \textnormal{\underline{R}BSDE}$^T(\xi,f_Y+dV,L)$ such that $\bar{Y}$ is of class (D). By uniqueness, $\bar{Y}=Y$, so(\ref{prop2.1}) yields the required result.
\end{proof}

\begin{proposition}\label{prop3}
Assume that \textnormal{(H1)--(H5)} are satisfied. Let $(Y,M,K)$ be a solution of \textnormal{\underline{R}BSDE}$^T(\xi,f+dV,L)$. Then
\begin{equation*}
\begin{split}
E\int^T_0|f(r,Y_r)|\,dr+EK_T&\le C\Big(\|Y\|_1+\|X\|_1+E\int^T_0|f(r,0)|\,dr\\
&\quad+E\int^T_0\,d|V|_r
+E\int^T_0f^-(r,X_r)\,dr+E\int^T_0\,d|C|_r\Big),
\end{split}
\end{equation*}
where $X_t=X_0+C_t+H_t,\, t\ge 0$, is the Doob-Meyer decomposition of the process $X$ appearing in \textnormal{(H5)}.
Moreover, $M$ is a uniformly integrable martingale and
\[
M_t=E\Big(\int^T_0 f(r,Y_r)\,dr+\int^T_0\,dV_r+\int^T_0\,dK_r|\mathcal{F}_t\Big)-Y_0,\quad t\in[0,T].
\]
\end{proposition}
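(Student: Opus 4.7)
The plan is to bound $EK_T$ first by a Meyer-Tanaka argument exploiting the test process $X$ of (H5), then to bound $E\int_0^T|f(r,Y_r)|\,dr$ via a Lemma~\ref{lm.ad3.1}-style computation on $|Y|$, combine the two, and finally to deduce uniform integrability of $M$ together with its representation directly from the equation.

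For the $K$-bound, let $X=X_0+C+H$ (the canonical decomposition of $X$ viewed as a difference of class (D) supermartingales: $C\in\VV_p$, $H\in\MM_{loc}$) and apply the Meyer-Tanaka formula for regulated processes (cf.\ \cite[Corollary A.5]{KRzS}) to $(Y-X)^-$ on $[0,T_a]$. The crucial observation is that the minimality of $K$ together with $L\le X$ forces $dK$ to be carried by $\{Y\le X\}$: indeed, $dK^*$ lives on $\{Y_-=\overrightarrow L\}\subset\{Y_-\le X_-\}$, and $\Delta^+ K_r>0$ implies $Y_r=L_r\le X_r$. On $\{Y\le X\}$ monotonicity (H2) gives $f(r,Y_r)\ge f(r,X_r)\ge -f^-(r,X_r)$. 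Localizing the local martingale $M-H$, taking expectations, and sending the localizing index to infinity produces
\[
EK_{T_a}\le E(Y_{T_a}-X_{T_a})^- + E\int_0^{T_a} f^-(r,X_r)\,dr + E\int_0^{T_a} d|V|_r + E\int_0^{T_a} d|C|_r.
\]
Sending $a\to\infty$ (monotone convergence for the increasing process $K$) and using $(Y-X)^-\le|Y|+|X|$ together with the class (D) property of $Y$ and $X$ completes this step.

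For the $\int|f|$-bound, apply Meyer-Tanaka to $|Y|$ on $[0,T_a]$ and split $\sgn(Y_r)f(r,Y_r)=\sgn(Y_r)f(r,0)-g(r)$ with $g(r):=-\sgn(Y_r)[f(r,Y_r)-f(r,0)]\ge 0$ by (H2), exactly as in the proof of Lemma~\ref{lm.ad3.1}. The crude estimates $|\sgn(Y_{r-})\,dV_r|\le d|V|_r$ and $\sgn(Y_{r-})\,dK_r\le dK_r$ (since $K$ is increasing) together with localization of $M$ yield
\[
E\int_0^{T_a} g(r)\,dr\le E|Y_{T_a}|+E\int_0^{T_a}|f(r,0)|\,dr+E\int_0^{T_a} d|V|_r+EK_{T_a}.
\]
Since $|f(r,Y_r)|\le g(r)+|f(r,0)|$, summing this with the estimate of the previous paragraph gives the announced inequality with an explicit constant $C$ (e.g.\ $C=3$).

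Uniform integrability of $M$ is read off from Definition~\ref{pt1}(d): rearranging the equation and substituting the value at $t=0$ gives, for every $t\le T_a$,
\[
M_t=Y_t-Y_0+\int_0^t f(r,Y_r)\,dr+V_t+K_t,
\]
and each term on the right is dominated by an integrable random variable uniformly in $t$ --- the family $\{Y_\tau\}_{\tau\in\TT}$ is UI by class (D), while $\int_0^T|f(r,Y_r)|\,dr$, $|V|_T$ and $K_T$ lie in $L^1$ by the estimate just proved and (H1). Hence $\{M_\tau\}_{\tau\in\TT}$ is UI, so the local martingale $M$ is a true UI martingale, and the identity $M_t=E(M_\infty|\FF_t)$ with $M_\infty=\lim_a M_{T_a}=\xi-Y_0+\int_0^T f(r,Y_r)\,dr+V_T+K_T$ delivers the claimed representation of $M_t$. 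The main obstacle lies in the first step: one must carefully track the right-jump contributions and the nonnegative local-time corrections in the regulated Meyer-Tanaka expansion of $(Y-X)^-$ and verify that they all enter with the favourable sign, so that the minimality identity $\{dK>0\}\subset\{Y\le X\}$ produces the clean inequality above; once this bookkeeping is in place, the rest is routine localization and monotone convergence.
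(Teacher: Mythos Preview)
Your argument is correct for the main inequality and takes a genuinely different route from the paper. The paper first builds a sandwich $\underline Y\le Y\le \bar X$ by comparison (Proposition~\ref{sob4}) with solutions of auxiliary BSDEs, deduces that $Y$ is of class (D), and then invokes an external estimate (\cite[Theorem~2.8]{Kl}) applied to those BSDEs to bound $E\int_0^T|f(r,Y_r)|\,dr$; the bound on $EK_T$ is obtained last, simply by taking expectations in the equation. You reverse the order and replace the black-box citation by two direct Meyer--Tanaka computations: one on $(Y-X)^-$ exploiting the minimality condition and $L\le X$ to isolate $dK$, and one on $|Y|$ in the spirit of Lemma~\ref{lm.ad3.1}. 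This is more self-contained and yields an explicit constant, at the price of the careful bookkeeping with regulated jumps you flag.

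One point deserves attention. In Step~3 you invoke that $\{Y_\tau\}_{\tau\in\TT}$ is uniformly integrable ``by class (D)'', and in Step~2 passing to the limit in $E|Y_{\tau_k}|$ along the localizing sequence also uses this. However, class (D) of $Y$ is not a hypothesis of the proposition; it is a conclusion hidden in the ``Moreover'' clause, and the paper obtains it precisely from the comparison sandwich $\underline Y\le Y\le \bar X$. Your Step~1 does not need it (since $(Y-X)^-\le (X-L)^+$, which is of class (D)), and for the inequality itself you could simply observe it is vacuous when $\|Y\|_1=\infty$. But for the unconditional claim that $M$ is a uniformly integrable martingale you must first establish that $Y$ is of class (D); the quickest patch is to insert the paper's comparison step (construct $\bar X$ via BSDE$^T(\xi,f+dV^++dC'^+)$ and sandwich $Y$ between $\underline Y$ and $\bar X$) before your Step~2.
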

\begin{proof}
Let $(\underline{Y},\underline{M})$ be a solution of BSDE$^T(\xi,f+dV)$ such that $\underline{Y}$ is of class (D). The existence of the solution follows from \cite[Proposition 2.7]{Kl}. By Proposition \ref{sob4}, $Y\ge\underline{Y}$. By (H5), there exists a process $X$ such that $X$ is of class (D), $X\ge L$ and $E\int^T_0 f^-(r,X_r)\,dr<\infty$. There exist processes $H\in\mathcal{M}_{loc}$ and $C\in\mathcal{V}^1_p$ such that for every $a\ge0$,
\[
X_t=X_{T_a}+\int^{T_a}_t\,dC_r-\int^{T_a}_t\,dH_r,\quad t\in[0,T_a].
\]
This equation can be rewritten as
\begin{equation*}
\begin{split}
X_t&=X_{T_a}+\int^{T_a}_t f(r,X_r)\,dr+\int^{T_a}_t\,dV_r+\int^{T_a}_t\,dC'_r-\int^{T_a}_t\,dH_r, \quad t\in[0,T_a],
\end{split}
\end{equation*}
with $C'_t=-\int^t_0 f(r,X_r)\,dr-V_t+C_t$. Let $(\bar{X},\bar{H})$ be a solution of BSDE($\xi$,$f+dV^++dC^{',+}$) such that $\bar{X}$ is of class (D). The existence of the solution follows from \cite[Proposition 2.7]{Kl}. By Proposition \ref{sob4}, $\bar{X}\ge X$, so $\bar{X}\ge L$. Note that the triple $(\bar{X},\bar{H},0)$ is a solution of {\underline{R}BSDE}$^T(\xi$,$f+dV^++dC^{',+}$,$\bar{X})$. Hence, by Proposition \ref{sob4}, $\bar{X}\ge Y$. We have
\begin{equation}\label{prop3.0}
\underline{Y}\le Y\le\bar{X},
\end{equation}
so $Y$ is of class (D). Furthermore, by (H2) and (\ref{prop3.0}),
\begin{equation}\label{prop3.1}
f(r,\underline{Y})\ge f(r,Y_r)\ge f(r,\bar{X}).
\end{equation}
By (H5), (\ref{prop3.1}) and \cite[Theorem 2.8]{Kl},
\begin{align}
\label{prop3.1.1}
E\int^T_0|f(r,Y_r)|\,dr&\le C\Big(E|\xi|+E|X_T|+E\int^T_0|f(r,0)|\,dr \nonumber\\
&\quad+E\int^T_0f^-(r,X_r)\,dr+E\int^T_0\,d|V|_r+E\int^T_0\,d|C|_r\Big)
\end{align}
for some $C_1>0$. We will show that $EK_T<\infty$. For every $a\ge0$ we have
\begin{equation}\label{prop3.2}
Y_{T_a}=Y_0-\int^{T_a}_0 f(r,Y_r)\,dr-\int^{T_a}_0\,dV_r-\int^{T_a}_0\,dK_r+\int^{T_a}_0\,dM_r.
\end{equation}
Let $\{\tau_k\}$ be a localizing sequence on $[0,T_a]$ for the local martingale $M$. By (\ref{prop3.2}) with $T_a$ replaced by $\tau_k$,
\begin{equation*}
Y_{\tau_k}=Y_0-\int^{\tau_k}_0 f(r,Y_r)\,dr-\int^{\tau_k}_0\,dV_r-\int^{\tau_k}_0\,dK_r+\int^{\tau_k}_0\,dM_r.
\end{equation*}
Taking the expectation and letting $k\rightarrow\infty$ and then $a\rightarrow\infty$,  we obtain
\begin{equation*}\label{prop3.3}
EY_T=EY_0-E\int^T_0 f(r,Y_r)\,dr-E\int^T_0\,dV_r-E\int^T_0\,dK_r
\end{equation*}
and
\begin{equation*}
EK_T\le \Big(2\|Y\|_1+E\int^T_0 f(r,Y_r)\,dr+E\int^T_0\,d|V|_r\Big).
\end{equation*}
By (H2),
\begin{equation*}
EK_T\le \Big(2\|Y\|_1+E\int^T_0 |f(r,0)|\,dr+E\int^T_0\,d|V|_r\Big).
\end{equation*}
By the above inequality and (\ref{prop3.1.1}),
\begin{equation*}
\begin{split}
E\int^T_0|f(r,Y_r)|\,dr+EK_T&\le C\Big(\|Y\|_1+E|\xi|+E|X_T|+E\int^T_0|f(r,0)|\,dr\\
&\quad+E\int^T_0f^-(r,X_r)\,dr+E\int^T_0\,d|V|_r+E\int^T_0\,d|C|_r\Big)
\end{split}
\end{equation*}
for some $C>0$. By \cite[Remark 2.2]{Kl},
\[
M_t=E\Big(\int^T_0 f(r,Y_r)\,dr+\int^T_0\,dV_r+\int^T_0\,dK_r|\mathcal{F}_t\Big)-Y_0.
\]
so $M$ is a uniformly integrable martingale. This completes the proof.
\end{proof}

\subsubsection{Existence results for nonlinear RBSDEs}

\begin{proposition}\label{th2}
Assume that \textnormal{(H1), (H2), (H4)} are satisfied and there is a measurable $\lambda:[0,\infty)\rightarrow\mathbb{R}^+$ such that $\int^\infty_0\lambda(r)\,dr<\infty$ and
\[
|f(t,y_1)-f(t,y_2)|\le\lambda|y_1-y_2|,\quad t\in[0,T],\, y_1,y_2\in\BR.
\]
Then there exists a unique solution $(Y,M,K)$ of \textnormal{\underline{R}BSDE}$^T(\xi,f+dV,L)$ such that $Y$ is of class \textnormal{(D)}.
\end{proposition}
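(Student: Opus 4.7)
Uniqueness follows immediately from Proposition~\ref{sob4} (the comparison principle), applied in both directions. For existence I would use a Picard iteration on the space $\mathcal{X}$ of $\mathbb{F}$-progressively measurable regulated processes of class~(D), equipped with the norm $\|\cdot\|_1$. Given $U\in\mathcal{X}$, set $g_U(r)=f(r,U_r)$; the Lipschitz bound together with (H1) gives $|g_U(r)|\le|f(r,0)|+\lambda(r)|U_r|$, hence $E\int_0^T|g_U(r)|\,dr\le E\int_0^T|f(r,0)|\,dr+\|U\|_1\int_0^T\lambda(r)\,dr<\infty$, and Theorem~\ref{th1} produces a unique solution $(Y^U,M^U,K^U)$ of \textnormal{\underline{R}BSDE}$^T(\xi,g_U+dV,L)$ with $Y^U$ of class~(D). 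Define $\Phi(U)=Y^U$.

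The key contraction estimate comes from the Snell envelope representation in Theorem~\ref{th1}: for $U^1,U^2\in\mathcal{X}$,
\[
|\Phi(U^1)_\alpha-\Phi(U^2)_\alpha|\le E\Big(\int_\alpha^T\lambda(r)|U^1_r-U^2_r|\,dr\,\Big|\,\FF_\alpha\Big),\qquad \alpha\in\mathcal{T}.
\]
Starting from $Y^0\equiv 0$ and setting $Y^{n+1}=\Phi(Y^n)$, $Z^n=Y^{n+1}-Y^n$, and $h_n(t)=E|Z^n_t|$ for deterministic $t$, Fubini gives the recursion $h_{n+1}(t)\le\int_t^T\lambda(s)h_n(s)\,ds$. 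Iterating this $n$ times via the standard simplex-integral identity yields $h_n(t)\le\|h_0\|_\infty(\int_t^T\lambda)^n/n!$. Then for any stopping time $\tau$,
\[
E|Z^{n+1}_\tau|\le\int_0^T\lambda(r)h_n(r)\,dr\le\|h_0\|_\infty\,\frac{(\int_0^T\lambda)^{n+1}}{(n+1)!},
\]
so $\sum_n\|Z^n\|_1<\infty$ and $\{Y^n\}$ converges in $(\mathcal{X},\|\cdot\|_1)$ to a limit $Y$. Since $\Phi$ is continuous in $\|\cdot\|_1$ by the same estimate, $\Phi(Y)=Y$, and the associated $(M,K)$ produced by Theorem~\ref{th1} make $(Y,M,K)$ the required solution.

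The main obstacle is converting the pointwise estimate into an honest contraction, since $\int_0^T\lambda(r)\,dr$ may well exceed $1$. The trick is to defer taking the supremum over stopping times: working first with the deterministic-time functions $h_n$ and iterating the linear recursion produces the crucial $1/n!$ factor, after which the $\|\cdot\|_1$-bound drops out. A secondary technicality is verifying that the regulated structure and the minimality condition in Definition~\ref{pt1}(b) pass to the limit; this can be obtained by noting that $g_{Y^n}\to g_Y$ in $L^1(\Omega\times[0,T])$ (by Lipschitz) and invoking the stability of the Snell-envelope construction of Theorem~\ref{th1}, which identifies the limit $Y$ as the first component of the linear solution driven by $g_Y$ and therefore a solution of the nonlinear problem.
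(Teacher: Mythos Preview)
Your argument is correct and follows a genuinely different route from the paper's. Both proofs set up the same Picard scheme $Y^{n+1}=\Phi(Y^n)$ via Theorem~\ref{th1}, but the convergence mechanisms differ. You exploit the Lipschitz bound directly: the Snell--envelope representation yields the pointwise estimate, and iterating the recursion $h_{n+1}(t)\le\int_t^\infty\lambda(s)h_n(s)\,ds$ (note that $T$ is random, so the upper limit should really be $\infty$ with the convention $Z^n_r=0$ for $r>T$, which holds for $n\ge1$ since $Y^n_T=\xi$) produces factorial decay and hence a $\|\cdot\|_1$--Cauchy sequence. The paper instead leans on (H2): starting from the BSDE solution $Y^0$, the comparison result (Proposition~\ref{sob4}) forces the interlacing $Y^0\le Y^2\le Y^4\le\dots\le Y^3\le Y^1$, so the even and odd subsequences converge monotonically to $\bar Y$ and $\underline Y$; dominated convergence in the Snell representation shows these solve a coupled pair of linear reflected problems, and a Tanaka--Gronwall estimate closes the loop $\bar Y=\underline Y$.

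Your approach is shorter and gives a quantitative rate, but it is tied to the Lipschitz hypothesis. The paper's monotone--subsequence technique is the template that is reused, essentially unchanged, in Proposition~\ref{th3} and Theorem~\ref{th4} to handle generators that are merely continuous and nonincreasing; there the Lipschitz contraction is unavailable, while comparison and monotone convergence still work. One small point worth tightening in your write--up: the limit $Y$ is initially only a $\|\cdot\|_1$--limit, not automatically regulated; the clean way to recover a regulated version is exactly what you indicate at the end---show $g_{Y^n}\to g$ in $L^1(\Omega\times[0,T])$, let $(\tilde Y,\tilde M,\tilde K)$ be the Theorem~\ref{th1} solution with driver $g$, use the Snell stability (the paper cites \cite[Lemma~3.19]{KRzS}) to get $Y^{n+1}\to\tilde Y$ in $\|\cdot\|_1$, and then check $g=f(\cdot,\tilde Y)$.
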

\begin{proof}
Let $(Y^0,M^0)$ be a solution of BSDE$^T(\xi$,$f+dV)$ such that $Y^0$ is of class (D). The existence of the solution follows from \cite[Proposition 2.7]{Kl}. Next, for each $n\ge 1$, we define $(Y^n,M^n,K^n)$ to be a solution of {\underline{R}BSDE}$^T(\xi,f_n+dV,L)$ with
\[
f_n(r)=f(r,Y^{n-1}_r),
\]
such that $Y^n$ is of class (D). The existence of such a solution follows from Theorem \ref{th1}. By \cite[Theorem 2.8]{Kl},
\begin{equation}\label{th2.0.0}
E\int^T_0 |f(r,Y^0)|\,dr<\infty.
\end{equation}
Since $Y^n$ is of class (D), it follows from (H1) and the assumption that $f$ is Lipschitz continuous that
\begin{equation}\label{th2.0}
E\int^T_0|f(r,Y^n_r)|\,dr\le\int^T_0\lambda(r)\,dr\|Y^n\|_1+E\int^T_0|f(r,0)|\,dr<\infty.
\end{equation}
By Proposition \ref{sob4},
\begin{equation}\label{th2.1}
Y^0\le Y^2\le Y^4\le Y^6\le\dots\le Y^5\le Y^3\le Y^1.
\end{equation}
Consider the  monotone subsequences $\{Y^{2k}\}_{k\in\mathbb{N}}$ and $\{Y^{2k+1}\}_{k\in\mathbb{N}}$ of $\{Y^n\}_{n\in\mathbb{N}}$, and set $\bar{Y}=\lim_{k\rightarrow\infty}Y^{2k}$, $\underline{Y}=\lim_{k\rightarrow\infty}Y^{2k+1}$. We shall show that $\bar{Y}$ is a solution of {\underline{R}BSDE}$^T(\xi,\bar{f}_n+dV,L)$ with $\bar{f}(r)=f(r,\underline{Y}_r)$ and $\underline{Y}$ is a solution of {\underline{R}BSDE}$^T(\xi,\underline{f}+dV,L)$ with $\underline{f}(r)=f(r,\bar{Y}_r)$.
By (\ref{th2.1}), $\bar{Y}$ and $\underline{Y}$ are of class (D) and $\lim_{a\rightarrow\infty}\bar{Y}_{T_a}=\lim_{a\rightarrow\infty}\underline{Y}_{T_a}=\xi$. By (H2) and (\ref{th2.1}), for each $n\ge 1$,
\begin{equation}\label{th2.2}
f(r,Y^0_r)\ge f(r,Y^n_r)\ge f(r,Y^1_r),
\end{equation}
and by (H3),
\begin{equation}\label{th2.3}
f(r,Y^{2k-1}_r)\rightarrow f(r,\underline{Y}_r).
\end{equation}
Using  (\ref{th2.0.0}), (\ref{th2.0}), (\ref{th2.2}), (\ref{th2.3}) and the Lebesgue dominated convergence theorem shows that
\begin{equation}\label{th2.4}
E\int^{T}_0|f(r,Y^{2k-1}_r)-f(r,\underline{Y}_r)|\,dr\rightarrow 0
\end{equation}
as $k\rightarrow\infty$. By Proposition \ref{prop2}, for $\sigma\in\mathcal{T}_T$,
\begin{equation}\label{th2.5}
Y^{2k}_{\sigma}=\esssup_{\tau\in\mathcal{T}_{\sigma}}E\Big(\int^{\tau}_{\sigma} f(r,Y^{2k-1}_r)\,dr+\int^{\tau}_{\sigma}\,dV_r+L_{\tau}\mathbf{1}_{\{\tau<T\}}
+\xi\mathbf{1}_{\{\tau=T\}}|\mathcal{F}_{\sigma}\Big).
\end{equation}
By (\ref{th2.4}), (\ref{th2.5}) and \cite[Lemma 3.19]{KRzS},
\begin{equation*}
\bar{Y}_{\sigma}=\esssup_{\tau\in\mathcal{T}_{\sigma}}E\Big(\int^{\tau}_{\sigma} f(r,\underline{Y}_r)\,dr+\int^{\tau}_{\sigma}\,dV_r+L_{\tau}\mathbf{1}_{\{\tau<T\}}+\xi\mathbf{1}_{\{\tau=T\}}|\mathcal{F}_{\sigma}\Big).
\end{equation*}
By similar arguments we have, for every $\sigma\in\mathcal{T}_T$,
\begin{equation*}
\underline{Y}_{\sigma}=\esssup_{\tau\in\mathcal{T}_{\sigma}}E\Big(\int^{\tau}_{\sigma} f(r,\bar{Y}_r)\,dr+\int^{\tau}_{\sigma}\,dV_r+L_{\tau}\mathbf{1}_{\{\tau<T\}}+\xi\mathbf{1}_{\{\tau=T\}}|\mathcal{F}_{\sigma}\Big).
\end{equation*}
By \cite[Lemma 3.1, Lemma 3.2]{GIOQ}, the processes $\bar{Y},\underline{Y}$ are regulated and there exist $\bar{K},\underline{K}\in\mathcal{V}^+_p$ and $\bar{M},\underline{M}\in\mathcal{M}_{loc}$ such that for all $a\ge0$ and $t\in[0,T_a]$,
\[
\bar{Y}_t=\bar{Y}_{T_a}+\int^{T_a}_t f(r,\underline{Y}_r)\,dr+\int^{T_a}_t\,dV_r+\int^{T_a}_t\,d\bar{K}_r-\int^{T_a}_t\,d\bar{M}_r,
\]
\[
\underline{Y}_t=\underline{Y}_{T_a}+\int^{T_a}_t f(r,\bar{Y}_r)\,dr+\int^{T_a}_t\,dV_r+\int^{T_a}_t\,d\underline{K}_r
-\int^{T_a}_t\,d\underline{M}_r.
\]
Moreover, by \cite[Lemma 3.3]{GIOQ} and \cite[Proposition 2.34, p.~131]{EK}, we have
\[
\int^{T_a}_0(\bar{Y}_{r-}-\overrightarrow{L}_r)\,d\bar{K}^*_r+\sum_{0\le r<T_a}(\bar{Y}_r-L_r)\Delta^+\bar{K}_r=0,
\]
\[
\int^{T_a}_0(\underline{Y}_{r-}-\overrightarrow{L}_r)\,d\underline{K}^*_r+\sum_{0\le r<T_a}(\underline{Y}_r-L_r)\Delta^+\underline{K}_r=0
\]
for all $a\ge0$ and $t\in[0,T_a]$. Therefore ($\bar{Y},\bar{M},\bar{K}$) is a solution of {\underline{R}BSDE}$^T(\xi,\bar{f}+dV,L)$, and ($\underline{Y},\underline{M},\underline{K}$) is a solution of {\underline{R}BSDE}$^T(\xi,\underline{f}+dV,L)$.
We will show that $\bar{Y}=\underline{Y}$. By \cite[Corollary A.5]{KRzS}, for all $a\ge0$ and stopping times $\sigma,\tau\in\mathcal{T}_{T_a}$ such that $\sigma\le\tau$ we have
\begin{align}
\label{th2.6}
|\bar{Y}_{\sigma}-\underline{Y}_{\sigma}|&\le |\bar{Y}_{\tau}-\underline{Y}_{\tau}|
+\int^{\tau}_{\sigma}\sgn_{\{\bar{Y}_r>\underline{Y}_r\}}
(f(r,\bar{Y}_r)-f(r,\underline{Y}_r))\,dr\nonumber\\
&\quad+\int^{\tau}_{\sigma}\sgn_{\{\bar{Y}_{r-}>\underline{Y}_{r-}\}}\,d(\bar{K}_r
-\underline{K}_r)^*+\sum_{\sigma\le r<\tau}\sgn_{\{\bar{Y}_r>\underline{Y}_r\}} \Delta^+(\bar{K}_r-\underline{K}_r)\nonumber\\
&\quad-\int^{\tau}_{\sigma}\sgn_{\{\bar{Y}_{r-}>\underline{Y}_{r-}\}}\,d(\bar{M}_r
-\underline{M}_r).
\end{align}
By the minimality condition for $\bar{K}$ and the fact that $L\le \bar{Y}$,
\begin{equation}\label{th2.7}
\sgn_{\{\bar{Y}_r>\underline{Y}_r\}}\,d(\bar{K}_r-\underline{K}_r)^*
\le\mathbf{1}_{\{\bar{Y}_r>L_r\}}\,d\bar{K}^*=0.
\end{equation}
and
\begin{equation}\label{th2.8}
\sgn_{\{\bar{Y}_r>\underline{Y}_r\}}\Delta^+(\bar{K}_r-\underline{K}_r)
\le\mathbf{1}_{\{\bar{Y}_r>L_r\}}\Delta^+\bar{K}_r=0.
\end{equation}
By the assumption on $f$ we have that
\begin{equation}\label{th2.9}
\sgn_{\{\bar{Y}_r>\underline{Y}_r\}}(f(r,\bar{Y}_r)-f(r,\underline{Y}_r))\le\lambda(r)|\bar{Y}_r-\underline{Y}_r|.
\end{equation}
By (\ref{th2.6})--(\ref{th2.9}),
\begin{equation}\label{th2.10}
\begin{split}
|\bar{Y}_{\sigma}-\underline{Y}_{\sigma}|&\le |\bar{Y}_{\tau}-\underline{Y}_{\tau}|+\int^{\tau}_0\lambda(r)|\bar{Y}_r
-\underline{Y}_r|\,dr-\int^{\tau}_{\sigma}
\sgn_{\{\bar{Y}_{r-}>\underline{Y}_{r-}\}}\,d(\bar M_r-\underline{M}_r).
\end{split}
\end{equation}
Let $\{\tau_k\}$ be a localizing sequence on $[0,T_a]$ for the local martingale $\int^{\cdot}_{\sigma}\sgn_{\{\bar{Y}_{r-}>\underline{Y}_{r-}\}}\,d(\bar{M}_r-\underline{M}_r)$. By (\ref{th2.10}) with $\tau$ replaced by $\tau_k\ge\sigma$,
\begin{equation}\label{th2.11}
\begin{split}
|\bar{Y}_{\sigma}-\underline{Y}_{\sigma}|&\le |\bar{Y}_{\tau_k}-\underline{Y}_{\tau_k}|+\int^{\tau_k}_0\lambda(r)|\bar{Y}_r
-\underline{Y}_r|\,dr\\
&\quad-\int^{\tau_k}_{\sigma}
\sgn_{\{\bar{Y}_{r-}>\underline{Y}_{r-}\}}\,d(\bar{M}_r-\underline{M}_r),
\end{split}
\end{equation}
for $k\in\mathbb{N}$. Taking the expectation and then letting $k\rightarrow\infty$ and using  Fubini's theorem we get
\[
E|\bar{Y}_{\sigma}-\underline{Y}_{\sigma}|\le E|\bar{Y}_{T_a}-\underline{Y}_{T_a}|+\int^{T_a}_{\sigma}\lambda(r)E|\bar{Y}_r
-\underline{Y}_r|\,dr,\,a\ge 0.
\]
for all $a\ge0$. Applying now Gronwall's lemma with $\sigma$ replaced by $\sigma\wedge t$ and $t\in[0,T_a]$ we get
\begin{equation}
E|\bar{Y}_{\sigma\wedge t}-\underline{Y}_{\sigma\wedge t}|\le E|\bar{Y}_{T_a}-\underline{Y}_{T_a}|\exp\Big(\int^{T_a}_0\lambda(r)\,dr\Big).
\end{equation}
Taking $t=0$ and letting $a\rightarrow\infty$ yields $E|\bar{Y}_{\sigma}-\underline{Y}_{\sigma}|=0$. Therefore, by the Section Theorem (see, e.g., \cite[Chapter IV, Theorem 86]{dm}), $|\bar{Y}_t-\underline{Y}_t|=0$, $t\in[0,T]$. Consequently,  $\bar{Y}=\underline{Y}$ and the triple $(\bar{Y},\bar{M},\bar{K})$ is a solution of {\underline{R}BSDE}$^T(\xi,f+dV,L)$. Uniqueness of the solution follows from Proposition \ref{sob4}.
\end{proof}

\begin{proposition}\label{th3}
Assume that \textnormal{(H1)--(H4)} are satisfied and there exists a progressively measurable process $g$ such that $E\int^T_0|g(r)|\,dr<\infty$ and $f(t,y)\ge g(t)$, $t\in[0,T]$, $y\in\mathbb{R}$. Then there exists a solution $(Y,M,K)$ of \textnormal{{\underline{R}BSDE}}$^T(\xi,f+dV,L)$ such that $Y$ is of class \textnormal{(D)}.
\end{proposition}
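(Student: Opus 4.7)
The plan is to approximate $f$ from below by a monotone sequence of generators that are Lipschitz continuous in $y$ with an integrable-in-$t$ Lipschitz coefficient, and then invoke Proposition \ref{th2}. For $n\ge1$ set
\[
f_n(t,y)=\inf_{z\ge y}\{f(t,z)+n e^{-t}(z-y)\}, \quad y\in\BR.
\]
Using (H2) and (H3) one checks that $f_n$ is continuous and nonincreasing in $y$, Lipschitz in $y$ with coefficient $\lambda_n(t)=ne^{-t}$ (so that $\int_0^{\infty}\lambda_n(r)\,dr=n<\infty$), and satisfies $g\le f_n\le f_{n+1}\le f$ with $f_n\uparrow f$ pointwise; in particular $f_n$ fulfils the hypotheses of Proposition \ref{th2}.

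Proposition \ref{th2} then yields unique solutions $(Y^n,M^n,K^n)$ of \underline{R}BSDE$^T(\xi,f_n+dV,L)$ with $Y^n$ of class (D), and comparison (Proposition \ref{sob4}) gives $Y^n\le Y^{n+1}$. Assumption (H5) is satisfied with $X$ equal to the smallest supermartingale majorizing $L$ (which is of class (D) by the Snell-envelope results recalled at the beginning of Section \ref{sec5}), since $f_n^{-}\le g^{-}$ yields $E\int_0^T f_n^{-}(r,X_r)\,dr<\infty$ uniformly in $n$. Applying Proposition \ref{prop3} to each approximate problem produces uniform bounds on $\|Y^n\|_1$, $E\int_0^T|f_n(r,Y^n_r)|\,dr$ and $EK^n_T$, together with the sandwich $\underline{Y}\le Y^n\le\bar{X}$ by two fixed class (D) processes constructed in the proof of Proposition \ref{prop3}.

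Set $Y:=\sup_n Y^n$. Then $Y$ is of class (D) with $\lim_{a\to\infty}Y_{T_a}=\xi$, and by Proposition \ref{prop2} each $Y^n$ admits the Snell-envelope representation
\[
Y^n_{\alpha}=\esssup_{\tau\in\TT_{\alpha}}E\Big(\int_{\alpha}^{\tau} f_n(r,Y^n_r)\,dr+\int_{\alpha}^{\tau}dV_r+L_{\tau}\mathbf{1}_{\{\tau<T\}}+\xi\mathbf{1}_{\{\tau=T\}}\Big|\FF_{\alpha}\Big).
\]
Since $\underline{Y}\le Y^n\uparrow Y\le\bar{X}$ and $f$ is nonincreasing and continuous in $y$, we have the domination $|f_n(r,Y^n_r)|\le|g(r)|+|f(r,\underline{Y}_r)|\in L^1(dt\otimes dP)$ on each $[0,T_a]$, and $f_n(r,Y^n_r)\to f(r,Y_r)$ pointwise. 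Dominated convergence together with the stability of Snell envelopes (in the spirit of \cite[Lemma 3.19]{KRzS}) transfers the representation to $Y$ with $f$ in place of $f_n$. Following the construction in the proof of Theorem \ref{th1}, the Mertens decomposition of the supermartingale $S_{\alpha}:=Y_{\alpha}+\int_0^{\alpha} f(r,Y_r)\,dr+\int_0^{\alpha}dV_r$ delivers $M\in\mathcal{M}_{loc}$ and $K\in\mathcal{V}_p^{+}$ satisfying (d), while the minimality condition (b) is obtained on each $[0,T_a]$ via \cite[Lemma 3.2, Lemma 3.3]{GIOQ} applied to the Snell envelope $S$; assembling these ingredients shows that $(Y,M,K)$ is a solution of \underline{R}BSDE$^T(\xi,f+dV,L)$.

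The main obstacle is the passage to the limit in the final step: the pointwise convergence $f_n(t,y)\to f(t,y)$ degrades as $t$ grows because of the weight $e^{-t}$ (the effective Lipschitz constant $ne^{-t}$ is small at large $t$), so one must localize on $[0,T_a]$ before sending $n\to\infty$ and then $a\to\infty$, verifying the forward equation and the minimality condition on each slice $[0,T_a]$ separately. A secondary technical point is the uniformity of the Proposition \ref{prop3}-estimates, which relies crucially on the bound $f_n^{-}\le g^{-}$ and on the fact that the auxiliary processes $\underline{Y}$ and $\bar X$ appearing in the proof of Proposition \ref{prop3} can be taken to be the same for all $n$.
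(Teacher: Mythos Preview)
Your overall strategy---inf-convolution approximation, monotone limit via comparison, and passage to the limit in the Snell-envelope representation---matches the paper's, and your one-sided approximant with weight $ne^{-t}$ is a harmless variant of the paper's $c_n(r)\inf_x\{f(r,x)+n|y-x|\}$.

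There is, however, a genuine gap in the domination step. You assert the sandwich $\underline{Y}\le Y^n\le\bar{X}$ with fixed processes ``from the proof of Proposition \ref{prop3}'' and then bound $|f_n(r,Y^n_r)|\le|g(r)|+|f(r,\underline{Y}_r)|\in L^1(dt\otimes dP)$. For $\bar X$ a uniform choice is indeed available (the paper builds one from the solution of $\underline{\mathrm{R}}\mathrm{BSDE}^T(\xi,0,L)$ and a BSDE with the \emph{original} $f$). For $\underline Y$, though, the solution of BSDE$^T(\xi,f+dV)$ is \emph{not} a lower bound for $Y^n$: since $f_n\le f$, comparison only gives $Y^n\ge\underline Y^n$ with $\underline Y^n$ the BSDE solution for $f_n$, and $\underline Y^n\le\underline Y$, not $\underline Y\le Y^n$. (A counterexample: $f\equiv 0$, $f_n\equiv -1/n$, $L\equiv -1$, $V\equiv 0$, $\xi=0$; near $T$ the reflected solution is $Y^n_t=-(T-t)/n<0=\underline Y_t$.) The only uniform lower bound you actually have is $Y^1$, so the correct pointwise bound is $g(r)\le f_n(r,Y^n_r)\le f(r,Y^1_r)$---exactly what the paper uses in (\ref{th3.1}). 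But $E\int_0^T f^+(r,Y^1_r)\,dr$ need not be finite, and localizing on $[0,T_a]$ gives only pathwise finiteness of the dominating integral, not the $L^1$ control required by the stability lemma you invoke.

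The paper resolves this by localizing along the chain $\tau_k=\inf\{t\ge0:\int_0^t|f(r,Y^1_r)|\,dr\ge k\}\wedge T$, so that $E\int_0^{\tau_k}|f(r,Y^1_r)|\,dr\le k$ by construction; dominated convergence and \cite[Lemma 3.19]{KRzS} then yield the limiting Snell representation on each $[0,\tau_k]$, the Mertens decomposition produces a solution there, and uniqueness lets one glue along the chain. Replacing your $[0,T_a]$ slicing by this chain construction is what is missing to make your argument complete.
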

\begin{proof}
For each $n\ge 1$ let
\[
f_n(r,y)=c_n(r)\inf_{x\in\mathbb{R}}\{f(r,x)+n|y-x|\},
\]
where $0\le c_n\le 1$, $c_n(r)\nearrow 1$ as $n\rightarrow\infty$ and $\int^T_0 c_n(r)\,dr<\infty$. Let $(Y^n,M^n,K^n)$ be a solution of {\underline{R}BSDE}$^T(\xi,f_n+dV,L)$ such that $Y^n$ is of class (D). It is easy to check that for each $n\ge 1$ the hypotheses  \textnormal{(H1), (H2) and (H4)} are satisfied and for all $t\in[0,T]$ and $y_1,y_2\in\mathbb{R}$,
\[
|f_n(t,y_1)-f(t,y_2)|\le c_n\,n\,|y_1-y_2|.
\]
Therefore the existence of such  solutions follows from Proposition \ref{th2}. Moreover, since for each $n\ge 1$, $f_n\le f_{n+1}$, we have that $Y^n\le Y^{n+1}$ by Proposition \ref{sob4}. Set $Y=\sup_{n\ge 1}Y^n$. Then $Y$ is of class (D) and $\lim_{a\rightarrow\infty}Y_{T_a}=\xi$. To see this, consider the solution $(X,H,C)$ of \underline{R}BSDE$^T(\xi,0,L)$ such that $X$ is of class (D).  The existence of the solution follows from Proposition \ref{th2}. We know that $X$ is supermartingale majorizing $L$ and since $f(t,y)\ge g(t)$ for $t\in[0,T]$ and $y\in\mathbb{R}$, we have  $E\int^T_0 f^-(r,X_r)\,dr<\infty$. Moreover,  $EC_T<\infty$. By the definition of a solution of \underline{R}BSDE we know that for every $a\ge0$,
\[
X_t=X_{T_a}+\int^{T_a}_t\,dC_r-\int^{T_a}_t\,dH_r,\quad t\in[0,T_a].
\]
This equation can be rewritten in the form
\[
X_t=X_{T_a}+\int^{T_a}_t f(r,X_r)\,dr+\int^{T_a}_t\,dV_r+\int^{T_a}_t\,dC'_r-\int^{T_a}_t\,dH_r, \quad t\in[0,T_a],
\]
with  $C'_t=-\int^t_0 f(r,X_r)\,dr-V_t+C_t$. Let $(\bar{X},\bar{H})$ be a solution of BSDE$^T(\xi$,$f+dV^++dC^{',+}$) such that $\bar{X}$ is of class (D). The existence of the solution follows from \cite[Proposition 2.7]{Kl}. Note that $(X,H)$ is a solution of BSDE$^T(\xi,f+dV+dC')$. By Proposition \ref{sob4}, $\bar{X}\ge X$, so $\bar{X}\ge L$. Moreover, the triple $(\bar{X},\bar{H},0)$ is a solution of {\underline{R}BSDE}$^T(\xi$,$f+dV^++dC^{',+}$,$\bar{X})$.  Therefore, by Proposition \ref{sob4}, $\bar{X}\ge Y^n$, $n\ge 1$. We have
\begin{equation}\label{th3.0}
Y^1\le Y^n\le\bar{X},
\end{equation}
so $Y$ is of class (D) and $\lim_{a\rightarrow\infty}Y_{T_a}=\xi$.
By (H3), $f_n(r,Y^n_r)\rightarrow f(r,Y_r)$ as $n\rightarrow\infty$. Since $f_n\le f_{n+1}$, from  (H2), (\ref{th3.0}) and the assumption on $f$ it also follows that
\begin{equation}\label{th3.1}
g(r)\le f_n(r,Y^n_r)\le f(r,Y^1_r).
\end{equation}
Set
\[
\tau_k=\inf\{t\ge 0;\,\int^t_0|f(r,Y^1_r)|\,dr\ge k\}\wedge T.
\]
Observe that $\{\tau_k\}$ is a chain on $[0,T]$ and the triple $(Y^n,M^n,K^n)$ is a solution of {\underline{R}BSDE}($Y^n_{\tau_k}$,$f_n+dV$,$L$) on $[0,\tau_k]$. Hence, by Proposition \ref{prop2}, for every $\sigma\in\mathcal{T}_{\tau_k}$,
\begin{equation}
\label{th3.3}
Y^n_{\sigma}=\esssup_{\tau\in\mathcal{T}_{\sigma,\tau_k}}E\Big(\int^{\tau}_{\sigma} f_n(r,Y^n_r)\,dr+\int^{\tau}_{\sigma}\,dV_r+L_{\tau}\mathbf{1}_{\{\tau<\tau_k\}}
+Y^n_{\tau_k}\mathbf{1}_{\{\tau=\tau_k\}}|\mathcal{F}_{\sigma}\Big).
\end{equation}
By the definition of $\tau_k$ and (\ref{th3.1}),
\begin{equation}\label{th3.4}
E\int^{\tau_k}_0|f_n(r,Y^n_r)-f(r,Y_r)|\,dr\rightarrow 0
\end{equation}
as $n\rightarrow\infty$. By (\ref{th3.0}), (\ref{th3.3}), (\ref{th3.4}) and \cite[Lemma 3.19]{KRzS},
\[
Y_{\sigma}=\esssup_{\tau\in\mathcal{T}_{\sigma,\tau_k}}E\Big(\int^{\tau}_{\sigma} f(r,Y_r)\,dr+\int^{\tau}_{\sigma}\,dV_r
+L_{\tau}\mathbf{1}_{\{\tau<\tau_k\}}
+Y_{\tau_k}\mathbf{1}_{\{\tau=\tau_k\}}|\mathcal{F}_{\sigma}\Big)
\]
for $\sigma\in\mathcal{T}_{\tau_k}$. By \cite[Lemma 3.1, Lemma 3.2]{GIOQ}, $Y$ is regulated and there exist $K^k\in\mathcal{V}^+_p$ and $M^k\in\mathcal{M}_{loc}$ such that for all $a\ge0$ and $t\in\tau_k\wedge a]$,
\begin{align*}
Y_t=Y_{\tau_k\wedge a}+\int^{\tau_k\wedge a}_t f(r,Y_r)\,dr+\int^{\tau_k\wedge a}_t\,dV_r+\int^{\tau_k\wedge a}_t\,dK^k_r-\int^{\tau_k\wedge a}_t\,dM^k_r.
\end{align*}
Also, by \cite[Lemma 3.3]{GIOQ} and \cite[Proposition 2.34, p.~131]{EK}, foe every $a\ge0$ we have
\[
\int^{\tau_k\wedge a}_0(Y_{r-}-\overrightarrow{L}_r)\,dK^{k,*}_r+\sum_{0\le r<\tau_k\wedge a}(Y_r-L_r)\Delta^+K^k_r=0,\quad t\in[0,\tau_k\wedge a].
\]
Therefore $(Y,M^k,K^k)$ is a solution of {\underline{R}BSDE}($Y_{\tau_k}$,$f+dV$,$L$) on $[0,\tau_k]$. By uniqueness, for every $a\ge0$, $K^k_t=K^{k+1}_t$ and $M^k_t=M^{k+1}_t$ for $t\in[0,\tau_k\wedge a]$. Therefore, since $\{\tau_k\}$ is a chain, we can define processes $K$ and $M$ on each interval $[0,T_a]$ by putting $K_t=K^k_t$, $M_t=M^k_t$ on $[0,\tau_k\wedge a]$, $a\ge 0$. Since $\lim_{a\rightarrow\infty}Y_{T_a}=\xi$, we can see that $(Y,M,K)$ is a solution of {\underline{R}BSDE}$^T(\xi,f+dV,L)$. Uniqueness follows from Proposition \ref{sob4}.
\end{proof}

\begin{theorem}\label{th4}
Assume that \textnormal{(H1)--(H4)} are satisfied.  Then there exists a solution $(Y,M,K)$ of \textnormal{{\underline{R}BSDE}}$^T(\xi,f+dV,L)$ such that $Y$ is of class \textnormal{(D)}.
\end{theorem}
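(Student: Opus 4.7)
The plan is to remove the integrable lower bound on $f$ assumed in Proposition \ref{th3} by a monotone approximation from above. I would pick a strictly positive deterministic function $c$ with $\int_0^T c(r)\,dr<\infty$ (for instance $c(r)=e^{-r}$), and for each $n\ge 1$ set $g_n(t):=-n\,c(t)$ and
\[
f_n(t,y):=f(t,y)\vee g_n(t).
\]
Each $f_n$ satisfies \mbox{(H1)--(H4)}, is nonincreasing and continuous in $y$, and dominates the integrable process $g_n$. Proposition \ref{th3} then furnishes a solution $(Y^n,M^n,K^n)$ of \underline{R}BSDE$^T(\xi,f_n+dV,L)$ with $Y^n$ of class (D). Since $g_n\ge g_{n+1}$ we have $f_n\ge f_{n+1}$, so Proposition \ref{sob4} gives $Y^n\ge Y^{n+1}$. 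Let $(\underline Y,\underline M)$ denote the solution of the unreflected BSDE$^T(\xi,f+dV)$ of class (D), whose existence under \mbox{(H1)--(H4)} follows from \cite[Proposition 2.7]{Kl}; viewing it as an \underline{R}BSDE solution with $K\equiv 0$ and applying Proposition \ref{sob4} with $f_n\ge f$ yields $Y^n\ge\underline Y$. Hence $Y^n\downarrow Y$ with $\underline Y\le Y\le Y^1$, and $Y$ is of class (D).

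For the passage to the limit, note that (H5) holds for $f_n$ with $X$ equal to the Snell envelope of $L$ from above: since $f_n\ge g_n$ and $g_n\le 0$ we have $f_n^-(\cdot,X)\le|g_n|\in L^1(dP\otimes dt)$, so Proposition \ref{prop3} yields $E\int_0^T|f_n(r,Y^n_r)|\,dr<\infty$ and Proposition \ref{prop2} gives the Snell envelope representation
\[
Y^n_\alpha=\esssup_{\tau\in\mathcal{T}_\alpha} E\Big(\int_\alpha^\tau f_n(r,Y^n_r)\,dr+\int_\alpha^\tau dV_r+L_\tau\mathbf{1}_{\{\tau<T\}}+\xi\mathbf{1}_{\{\tau=T\}}\,\Big|\,\mathcal{F}_\alpha\Big).
\]
The key pointwise bound
\[
0\le f_n(r,Y^n_r)-f(r,Y^n_r)=(g_n(r)-f(r,Y^n_r))^+\le (g_n(r)-f(r,Y^1_r))^+\le f^-(r,Y^1_r),
\]
valid because $Y^n\le Y^1$ and $g_n\le 0$, together with $g_n\to-\infty$ pointwise and the integrability of $f^-(\cdot,Y^1)$ provided by Proposition \ref{prop3}, gives $E\int_0^T|f_n(r,Y^n_r)-f(r,Y^n_r)|\,dr\to 0$ by dominated convergence. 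Separately, $f(\cdot,Y^n)$ increases to $f(\cdot,Y)$ by (H2), (H3) and $Y^n\downarrow Y$, while remaining sandwiched between the integrable processes $f(\cdot,Y^1)$ and $f(\cdot,\underline Y)$, so dominated convergence also yields $E\int_0^T|f(r,Y^n_r)-f(r,Y_r)|\,dr\to 0$.

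Invoking \cite[Lemma 3.19]{KRzS}, as in the proofs of Propositions \ref{th2} and \ref{th3}, one passes to the limit inside the essup to obtain
\[
Y_\alpha=\esssup_{\tau\in\mathcal{T}_\alpha} E\Big(\int_\alpha^\tau f(r,Y_r)\,dr+\int_\alpha^\tau dV_r+L_\tau\mathbf{1}_{\{\tau<T\}}+\xi\mathbf{1}_{\{\tau=T\}}\,\Big|\,\mathcal{F}_\alpha\Big).
\]
From this representation, \cite[Lemma 3.1, Lemma 3.2, Lemma 3.3]{GIOQ} together with \cite[Proposition 2.34, p.~131]{EK}, applied on each interval $[0,T_a]$, produce $K\in\mathcal{V}^+_p$ and $M\in\mathcal{M}_{loc}$ making $(Y,M,K)$ satisfy Definition \ref{pt1}(a)--(d) together with the minimality condition in (b); the terminal condition (e) is inherited from the sandwich $\underline Y\le Y\le Y^1$, and uniqueness is Proposition \ref{sob4}. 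The main obstacle will be the $L^1$-control of the generator through the passage to the limit: the rate $g_n\to-\infty$ must be fast enough that $f_n\to f$ pointwise, yet the dominance $(g_n-f(\cdot,Y^1))^+\le f^-(\cdot,Y^1)$ must remain integrable uniformly in $n$. This balance is what forces the choice $g_n(t)=-n\,c(t)$ with $c$ a fixed strictly positive integrable function and not, say, $g_n\equiv-n$.
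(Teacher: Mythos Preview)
Your approximation $f_n=f\vee(-nc)$ and the monotone sandwich $\underline Y\le Y\le Y^n\le Y^1$ match the paper exactly. The gap is in your passage to the limit on the whole of $[0,T]$. You assert that $E\int_0^T f^-(r,Y^1_r)\,dr<\infty$ ``provided by Proposition~\ref{prop3}'', but that proposition, applied to the $f_1$-equation, only yields $E\int_0^T|f_1(r,Y^1_r)|\,dr<\infty$. Since $f_1=f\vee g_1\ge f$, this controls $f^+(\cdot,Y^1)$ but \emph{not} $f^-(\cdot,Y^1)$: the truncation from below that makes $f_1$ tractable is precisely what discards the information about how negative $f$ can be along $Y^1$. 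The same objection applies to your sandwich $f(\cdot,Y^1)\le f(\cdot,Y^n)\le f(\cdot,\underline Y)$: the upper bound $f(\cdot,\underline Y)$ is indeed integrable (Lemma~\ref{lm.ad3.1}), but the lower bound $f(\cdot,Y^1)$ is not, so dominated convergence is not available and the $L^1$ convergence needed for \cite[Lemma 3.19]{KRzS} does not follow. Under (H1)--(H4) alone one should \emph{not} expect $E\int_0^T|f(r,Y_r)|\,dr<\infty$; this is stressed in the Introduction, and Proposition~\ref{prop3} needs the extra hypothesis (H5) on $f$ itself---which is not assumed here---to recover global integrability.

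The paper's remedy is localization. One sets
\[
\tau_k=\inf\Big\{t\ge0:\int_0^t\big(|f(r,Y^1_r)|+|f_1(r,\underline Y_r)|\big)\,dr\ge k\Big\}\wedge T,
\]
so that on each $[0,\tau_k]$ the relevant drivers are bounded in $L^1$ by $k$ and the dominated convergence you want goes through. The Snell envelope representation and the construction of $(M^k,K^k)$ via \cite[Lemmas 3.1--3.3]{GIOQ} are then carried out on $[0,\tau_k]$; the local solutions agree on overlaps by uniqueness and are glued into a solution on $[0,T]$ because $\{\tau_k\}$ is a chain. Your global argument can be repaired by inserting exactly this localization step.
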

\begin{proof}
We consider a strictly positive  function $g:[0,\infty)\rightarrow\mathbb{R}$ such that $\int^{\infty}_0 g(r)\,dr<\infty$. For each $n\ge 1$, let
\[
f_n(r,y)=f(r,y)\vee (-n\cdot g(r)),
\]
From theorem  Theorem \ref{th3} we know that for each $n\ge1$ there exists a solution $(Y^n,M^n,K^n)$ to of \underline{R}BSDE$^T(\xi,f_n+dV,L)$ such that  $Y^n$ is of class (D). Since  $f_n\ge f_{n+1}$,   $Y^n\ge Y^{n+1}$ by Proposition \ref{sob4}. Set $Y=\inf_{n\ge 1} Y^n$. Then $Y$ is of class (D) and $\lim_{a\rightarrow\infty}Y_{T_a}=\xi$. To see this, consider a solution $(X,H)$ to BSDE$^T(\xi$,$f+dV)$  such that $X$ is of class (D). It exists by \cite[Proposition 2.7]{Kl}. By Proposition \ref{sob4},
\begin{equation}\label{th4.0}
X\le Y^n\le Y^1,
\end{equation}
so $Y$ is of class (D) and $\lim_{a\rightarrow\infty}Y_{T_a}=\xi$.
By (H3),  $f_n(r,Y^n_r)\rightarrow f(r,Y_r)$ as $n\rightarrow\infty$. Moreover, since $f_n\ge f_{n+1}$, it follows from  (H2) and (\ref{th4.0}) that
\begin{equation}\label{th4.1}
f(r,Y^1_r)\le f_n(r,Y^n_r)\le f_1(r,X_r).
\end{equation}
Set
\[
\tau_k=\inf\{t\ge 0:\int^t_0|f(r,Y^1_r)|+|f_1(r,X_r)\,dr\ge k\}\wedge T.
\]
Then $\{\tau_k\}$ is a chain $[0,T]$ and $(Y^n,M^n,K^n)$ is a solution to {\underline{R}BSDE}($Y^n_{\tau_k}$,$f_n+dV$,$L$) on $[0,\tau_k]$. Hence, by Proposition \ref{prop2}, for evry $\sigma\in\mathcal{T}_{\tau_k}$,
\begin{equation}\label{th4.3}
Y^n_{\sigma}=\esssup_{\tau\in\mathcal{T}_{\sigma,\tau_k}}E\Big(\int^{\tau}_{\sigma} f(r,Y^n_r)\,dr+\int^{\tau}_{\sigma}\,dV_r+L_{\tau}\mathbf{1}_{\{\tau<\tau_k\}}
+Y^n_{\tau_k}\mathbf{1}_{\{\tau=\tau_k\}}|\mathcal{F}_{\sigma}\Big).
\end{equation}
By the definition of $\tau_k$ and (\ref{th4.1}),
\begin{equation}\label{th4.4}
E\int^{\tau_k}_0|f_n(r,Y^n_r)-f(r,Y_r)|\,dr\rightarrow 0.
\end{equation}
By (\ref{th4.0}), (\ref{th4.3}), (\ref{th4.4}) and \cite[Lemma 3.19]{KRzS},
\[
Y_{\sigma}=\esssup_{\tau\in\mathcal{T}_{\sigma,\tau_k}}E\Big(\int^{\tau}_{\sigma} f(r,Y_r)\,dr+\int^{\tau}_{\sigma}\,dV_r+L_{\tau}\mathbf{1}_{\{\tau<\tau_k\}}
+Y_{\tau_k}\mathbf{1}_{\{\tau=\tau_k\}}|\mathcal{F}_{\sigma}\Big)
\]
for $\sigma\in\mathcal{T}_{\tau_k}$. By \cite[Lemma 3.1, Lemma 3.2]{GIOQ}, $Y$ is regulated and there exist $K^k\in\mathcal{V}^+_p$ and $M^k\in\mathcal{M}_{loc}$ such that for all $a\ge0$ and $t\in[0,\tau_k\wedge a]$,
\begin{align*}
Y_t=Y_{\tau_k\wedge a}+\int^{\tau_k\wedge a}_t f(r,Y_r)\,dr+\int^{\tau_k\wedge a}_t\,dV_r+\int^{\tau_k\wedge a}_t\,dK^k_r-\int^{\tau_k\wedge\wedge a}_t\,dM^k_r.
\end{align*}
Moreover, by \cite[Lemma 3.3]{GIOQ} and \cite[Proposition 2.34, p.~131]{EK}, for all $a\ge0$ and $t\in[0,\tau_k\wedge a]$,
\[
\int^{\tau_k\wedge a}_0(Y_{r-}-\overrightarrow{L}_r)\,dK^{k,*}_r+\sum_{0\le r<\tau_k\wedge a}(Y_r-L_r)\Delta^+K^k_r=0.
\]
Therefore $(Y,M^k,K^k)$ is a solution of {\underline{R}BSDE}$^{\tau_k}(Y_{\tau_k}$,$f+dV$,$L$). By  uniqueness, for every $a\ge0$,  $K^k_t=K^{k+1}_t$ and $a\ge 0$, $M^k_t=M^{k+1}_t$ for  $t\in[0,\tau_k\wedge a]$. Since $\{\tau_k\}$ is a chain, we can define processes $K$ and $M$ on the intervals $[0,T_a]$ by putting $K_t=K^k_t$, $M_t=M^k_t$ on $[0,\tau_k\wedge a]$. Since $\lim_{a\rightarrow\infty}Y_{T_a}=\xi$, the triple  $(Y,M,K)$ is a solution of {\underline{R}BSDE}$^T(\xi,f+dV,L)$. Uniqueness of the solution follows from Proposition \ref{sob4}.
\end{proof}

\begin{remark}
\label{rem.wlog1}
Instead of condition
\begin{equation}
\label{eq.acih0}
E\int^T_0 |f(r,0)|\,dr<\infty
\end{equation}
appearing in hypothesis  (H1) one can consider the following more general condition: there exists a process  $S$ which is a difference of two supermartingales of class (D) such that
\[
E\int^T_0 |f(r,S_r)|\,dr<\infty.
\]
However, without loss of generality one can  assume that (\ref{eq.acih0}) is satisfied.
Indeed, let $(Y,M,K)$ be  a solution of \underline{R}BSDE$^T(\xi,f+dV,L)$. Since $S$ is a difference of supermartingales, there exist processes $H\in\mathcal{M}_{loc}$ and $C\in\mathcal{V}^1_p$ such that for every $a\ge0$,
\[
S_t=S_{T_a}+\int^{T_a}_t\,dC_r-\int^{T_a}_t\,dH_r,\quad t\in[0,T_a].
\]
Let $\tilde{Y}=Y-S$, $\tilde{M}=M-S$. Then $(\tilde{Y},\tilde{M},K)$ is a solution of \underline{R}BSDE$^T(\tilde{\xi},\tilde{f}+d\tilde{V},\tilde{L})$ with $\tilde\xi=\xi-S_T$, $\tilde{f}(r,y)=f(r,y+S_r)$, $\tilde{V}=V-C$ and $\tilde{L}=L-S$. We have
\[
E\int^T_0|\tilde{f}(r,0)|\,dr=E\int^T_0|f(r,S_r)|\,dr<\infty.
\]
\end{remark}

\subsubsection{Optimal stopping problem with nonlinear $f$-expectation}

Repeating step by step the proofs of the results of Section 8 in \cite{GIOQ} and \cite[Theorem 6.1]{GIOQ},  with using 
Proposition \ref{stw.fexp} and Proposition \ref{sob4}, we get the following result.

\begin{lemma}
\label{lm.ad7.1}
Assume that \mbox{\rm(H1)--(H4)} are satisfied.  For  $\alpha\in \TT$, let
\[
Y(\alpha)=\esssup_{\tau\in \TT}\EE^f_{\alpha,\tau}(L^\xi_{\tau}),
\]
where $L^\xi_t= L_t\mathbf{1}_{t<T}+\xi \mathbf{1}_{t=T}$. Then
\begin{enumerate}
\item[\rm(i)] There exists an optional process $Y$ which
aggregates the family $(Y(\alpha))_{\alpha\in \TT}$, that is $Y_\alpha=Y(\alpha)$ for every $\alpha\in\TT$.
\item[\rm(ii)] $Y$ is the smallest $\EE^f$-supermartingale majorizing $L^\xi$,
\item[\rm(iii)] If $L$ is u.s.c. from the right, then $Y$ coincides with the first component of the solution to RBSDE$^T(\xi,f,L)$.
\end{enumerate}
\end{lemma}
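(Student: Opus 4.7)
The plan is to transport the linear Snell-envelope construction of Lemma \ref{lm.ad5.1} into the nonlinear $\EE^f$-setting, using Proposition \ref{stw.fexp} (monotonicity, stability and localisation of $\EE^f$) and the comparison result Proposition \ref{sob4} as the main ingredients.

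First I would establish a nonlinear dynamic programming principle: for all $\alpha,\sigma\in\TT$ with $\alpha\le\sigma$,
\[
Y(\alpha)=\esssup_{\tau\in\TT_{\alpha,\sigma}}\EE^f_{\alpha,\tau}\bigl(L^\xi_{\tau}\mathbf{1}_{\{\tau<\sigma\}}+Y(\sigma)\mathbf{1}_{\{\tau=\sigma\}}\bigr).
\]
The inequality ``$\le$'' comes from splitting an arbitrary competitor $\tau\in\TT_\alpha$ across $\{\tau<\sigma\}\cup\{\tau\ge\sigma\}$ and invoking the time-consistency of $\EE^f$ (which is an immediate consequence of the uniqueness of BSDE$^T(\xi,f)$) together with monotonicity, Proposition \ref{stw.fexp}(ii) and the set-localisation formula Proposition \ref{stw.fexp}(iv); the reverse inequality is trivial. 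In particular $(Y(\alpha))_{\alpha\in\TT}$ is an $\EE^f$-supermartingale family, and Lemma \ref{lm.ad3.ilace} shows that it is of class \textnormal{(D)}.

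To aggregate this family into an optional process $Y$ (part (i)), I would verify that $\alpha\mapsto EY(\alpha)$ is right-continuous along decreasing sequences in $\TT$. This reduces to controlling the $L^1$-difference of $\EE^f_{\alpha,\tau}(L^\xi_\tau)$ as $\alpha$ varies, for which the stability estimate Proposition \ref{stw.fexp}(iii), combined with the uniform domination coming from Lemma \ref{lm.ad3.ilace}, is exactly what is needed; one then concludes by the standard Dellacherie--Meyer optional section/aggregation theorem for supermartingale families, exactly as in Section 8 of \cite{GIOQ}. Part (ii) is a one-line consequence of monotonicity: if $Z$ is any $\EE^f$-supermartingale of class \textnormal{(D)} satisfying $Z\ge L^\xi$, then for every $\tau\in\TT_\alpha$ Proposition \ref{stw.fexp}(ii) gives $Z_\alpha\ge\EE^f_{\alpha,\tau}(Z_\tau)\ge\EE^f_{\alpha,\tau}(L^\xi_\tau)$, so that $Z\ge Y$ after taking the essential supremum.

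For (iii), I would take the solution $(\widetilde Y,\widetilde M,\widetilde K)$ of \underline{R}BSDE$^T(\xi,f,L)$ delivered by Theorem \ref{th4}, localise on a chain $\{\tau_k\}$ as in its proof, and use Proposition \ref{prop2} on each $[0,\tau_k]$ to obtain the Snell-type representation of $\widetilde Y$. Under the right u.s.c.\ assumption on $L$ the minimality condition in Definition \ref{pt1}(b) forces $\widetilde K$ to charge only the contact set $\{\widetilde Y=L\}$, so that $\widetilde Y$ is an $\EE^f$-supermartingale dominating $L^\xi$, and parts (i)--(ii) then identify $\widetilde Y$ with $Y$. The main obstacle in the whole program is the aggregation step: because $\EE^f$ is nonlinear, one cannot invoke the Mertens decomposition directly on $(Y(\alpha))$, and the right-semicontinuity required by the optional section theorem has to be extracted from a careful interplay of Proposition \ref{stw.fexp}(iii) with the class \textnormal{(D)} bound of Lemma \ref{lm.ad3.ilace} — this is precisely what Section 8 of \cite{GIOQ} carries out in detail and what the proof invokes.
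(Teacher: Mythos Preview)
Your proposal is correct and takes essentially the same approach as the paper. The paper's own proof is the single sentence ``Repeating step by step the proofs of the results of Section~8 in \cite{GIOQ} and \cite[Theorem 6.1]{GIOQ}, with using Proposition~\ref{stw.fexp} and Proposition~\ref{sob4}, we get the following result''; your sketch fleshes out precisely this programme and invokes the same reference for the aggregation step.
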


\begin{theorem}
\label{th.ad7.1}
Assume that \mbox{\rm(H1)--(H4)} are satisfied. Let $(Y,M,K)$ be a solution of RBSDE$^T(\xi,f,L)$ such that $Y$ is of class \mbox{\rm(D)}.
Then for every $\alpha\in\TT$,
\[
Y_\alpha=\esssup_{\tau\in\TT_\alpha}
\EE^f_{\alpha,\tau}(L_\tau\mathbf{1}_{\tau<T}+\xi\mathbf{1}_{\tau=T}).
\]
\end{theorem}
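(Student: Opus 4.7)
The plan is to identify the first component $Y$ of the solution with the aggregating process $\tilde Y$ produced by Lemma \ref{lm.ad7.1}, whose value at every $\alpha\in\TT$ is exactly $\esssup_{\tau\in\TT_\alpha}\EE^f_{\alpha,\tau}(L_\tau\mathbf{1}_{\{\tau<T\}}+\xi\mathbf{1}_{\{\tau=T\}})$. By Lemma \ref{lm.ad7.1}(ii), $\tilde Y$ is characterized as the smallest $\EE^f$-supermartingale on $[0,T]$ majorizing $L^\xi_t:=L_t\mathbf{1}_{\{t<T\}}+\xi\mathbf{1}_{\{t=T\}}$, so the statement amounts to two opposite inequalities, both to be established by following step-by-step the arguments of Section~8 and Theorem~6.1 in \cite{GIOQ}, with Proposition \ref{stw.fexp} and Proposition \ref{sob4} replacing the corresponding classical $f$-expectation tools in that reference.

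First, I would prove $Y\ge\tilde Y$. Since $K\in\VV^+_p$ is nondecreasing and on each $[0,T_a]$ the pair $(Y,M)$ solves BSDE$^{\alpha,T_a}(Y_{T_a},f+dK)$, Proposition \ref{stw.fexp}(i) shows that $Y$ is an $\EE^f$-supermartingale on $[0,T_a]$ for every $a\ge 0$. Combined with $L\le Y$ pointwise and the terminal condition $Y_{T_a}\to\xi$, the stability estimate in Proposition \ref{stw.fexp}(iii) applied to the pair $(Y_{T_a\wedge\tau},L^\xi_\tau)$ and the vanishing of the residual integrals given by Lemma \ref{lm.ad3.ilace} allow one to let $a\to\infty$ and conclude
\[
Y_\alpha\ge \EE^f_{\alpha,\tau}(L_\tau\mathbf{1}_{\{\tau<T\}}+\xi\mathbf{1}_{\{\tau=T\}})
\]
for every $\tau\in\TT_\alpha$; in particular $Y$ is an $\EE^f$-supermartingale majorizing $L^\xi$, and taking essential supremum yields $Y_\alpha\ge\tilde Y_\alpha$. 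This direction also follows directly from the minimality clause in Lemma \ref{lm.ad7.1}(ii).

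For the reverse inequality $Y_\alpha\le\tilde Y_\alpha$, I would exhibit $\varepsilon$-optimal stopping times built from the minimality condition in Definition \ref{pt1}(b). Fix a chain $\{\delta_n\}$ such that $E\int_0^{\delta_n}|f(r,0)|\,dr<\infty$ and, for $\varepsilon>0$, put
\[
\tau^{n,\alpha}_\varepsilon=\inf\{t\ge\alpha:Y_t\le L_t+\varepsilon\ \text{or}\ Y_{t-}\le\overrightarrow{L}_t+\varepsilon\}\wedge\delta_n.
\]
The minimality condition asserts that $dK^*$ is carried by $\{Y_{-}=\overrightarrow{L}\}$ and $\Delta^+K$ by $\{Y=L\}$, so on $[\alpha,\tau^{n,\alpha}_\varepsilon)$ both $\int(Y_{r-}-\overrightarrow L_r)\,dK^*_r$ and $\sum(Y_r-L_r)\Delta^+K_r$ vanish while $K$ itself grows by at most $O(\varepsilon)$ (after weighting against $Y-L\ge\varepsilon$ from above). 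Combining Proposition \ref{stw.fexp}(i) with a stability argument in the spirit of Proposition \ref{stw.fexp}(iii) shows that $Y$ is an $\EE^f$-martingale on $[\alpha,\tau^{n,\alpha}_\varepsilon]$ up to an error $o_\varepsilon(1)$, and since $Y_{\tau^{n,\alpha}_\varepsilon}\le L^\xi_{\tau^{n,\alpha}_\varepsilon}+\varepsilon$ up to terms handled by Lemma \ref{lm.ad3.ilace} when $\tau^{n,\alpha}_\varepsilon=\delta_n<T$, monotonicity of $\EE^f$ (Proposition \ref{stw.fexp}(ii)) yields $Y_\alpha\le\EE^f_{\alpha,\tau^{n,\alpha}_\varepsilon}(L^\xi_{\tau^{n,\alpha}_\varepsilon})+o(1)\le\tilde Y_\alpha+o(1)$. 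Letting first $\varepsilon\to 0$ and then $n\to\infty$ concludes.

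The main obstacle is carrying out the second step in the absence of right-continuity of $L$: the predictable barrier $\overrightarrow{L}$ governs $dK^*$ whereas $L$ governs $\Delta^+K$, so the estimate on $K([\alpha,\tau^{n,\alpha}_\varepsilon))$ must be performed by splitting $K=K^*+K^d$ and bounding each piece on the set where the respective slackness is small. This is precisely what is done in \cite[Sections 6,8]{GIOQ}, and the only genuinely new ingredient needed here for the possibly infinite stopping time $T$ is the chain truncation, whose uniform control is supplied by Lemma \ref{lm.ad3.ilace}.
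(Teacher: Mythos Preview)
Your approach and the paper's agree on the easy direction: $Y$ is an $\EE^f$-supermartingale majorizing $L^\xi$, hence $Y\ge\tilde Y$ by Lemma~\ref{lm.ad7.1}(ii). The second inequality is where the arguments diverge genuinely.

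You attack $Y\le\tilde Y$ directly, by constructing $\varepsilon$-optimal stopping times $\tau^{n,\alpha}_\varepsilon$ from the minimality condition on $K$ and then controlling the endpoint and chain-truncation errors. This is in the spirit of \cite[Sections 6,8]{GIOQ} and can be made to work, but---as you yourself flag---the irregular barrier forces a careful case analysis at $\tau^{n,\alpha}_\varepsilon$ (the stopping may be triggered by $Y_{-}\le\overrightarrow{L}+\varepsilon$ without $Y\le L+\varepsilon$ holding at the same instant), and your phrase ``$K$ itself grows by at most $O(\varepsilon)$'' is not quite right: on $[\alpha,\tau^{n,\alpha}_\varepsilon)$ the minimality condition gives $dK=0$ exactly, and the work is all at the endpoint.

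The paper's proof bypasses this analysis entirely. Instead of producing near-optimal stopping times, it shows that $Y$ is the \emph{smallest} $\EE^f$-supermartingale dominating $L^\xi$ by a comparison trick: given any $\EE^f$-supermartingale $Y'\ge L^\xi$, one solves $\underline{\mathrm{R}}\mathrm{BSDE}^T(Y'_T,f,Y')$ to obtain $(\bar Y,\bar M,\bar K)$. Since $Y'$ is an $\EE^f$-supermartingale it is u.s.c.\ from the right, so Lemma~\ref{lm.ad7.1}(iii) applies with barrier $Y'$ and gives $\bar Y=Y'$ (because $Y'_\alpha=\esssup_{\tau\ge\alpha}\EE^f_{\alpha,\tau}(Y'_\tau)$ trivially). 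Proposition~\ref{sob4} then yields $Y'=\bar Y\ge Y$. Combined with Lemma~\ref{lm.ad7.1}(ii) this finishes the proof in a few lines. The key advantage is that Lemma~\ref{lm.ad7.1}(iii) is invoked only for a \emph{right-u.s.c.} barrier $Y'$, so the delicate irregular-barrier analysis you outline is never needed for this theorem; it has already been absorbed once and for all into Lemma~\ref{lm.ad7.1}.
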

\begin{proof}
By Proposition \ref{stw.fexp}(i), $Y$ is an $\EE^f$-supermartingale. Of course, $Y\ge L^\xi$. Let $Y'$ be an $\EE^f$-supermartingale such that $Y'\ge L^\xi$ and
$(\bar Y,\bar M,\bar K)$
be a solution to RBSDE$^T(Y'_T,f,Y')$. Then, by Proposition \ref{sob4}, $\bar Y\ge Y$. On the other hand, since $Y'$
is an $\EE^f$-supermartingale, $Y'_\alpha=\esssup_{\tau\ge \alpha}\EE^f_{\alpha,\tau}(Y'_\tau)$. Therefore, by Lemma \ref{lm.ad7.1}(iii)
and Proposition \ref{sob4}, $\bar Y=Y'$ (since $Y'$ is u.s.c. from the right as an $\EE^f$-supermartingale).  Thus $Y'\le Y$, which implies that $Y$ is the smallest $\EE^f$-supermartingale majorizing $L^\xi$. This when combined with Lemma \ref{lm.ad7.1}(ii) gives the desired result.
\end{proof}

\nsubsection{Existence results for RBSDEs with two barriers and  Dynkin games}
\label{sec6}

In this section, we  prove existence results for reflected BSDEs with two optional barriers satisfying  Mokobodzki's condition:
\begin{enumerate}
\item[(H6)] There exists a special semimartingale $X$ such that $L\le X\le U$.
\end{enumerate}
As in the case of one barrier, we first prove the existence of integrable solutions to RBSDEs with two barriers  satisfying the following stronger condition:
\begin{enumerate}
\item[(H6*)] There exists a process $X$  being a difference of two supermartingales of class (D) on $[0,T]$ such that $L\le X\le U$ and  $E\int^T_0 |f(r,X_r)|\,dr<\infty$.
\end{enumerate}
However, we start with showing that each solution to RBSDE is the value function in a nonlinear Dynkin game. This representation will also be needed in the proof of the main result.

\subsubsection{Nonlinear Dynkin games}

\begin{definition}
Let $\tau\in\TT$ and  $H\in\mathcal{F}_{\tau}$. Write $H^c=\Omega\setminus H$. If $H^c\cap\{\tau=T\}=\emptyset$, then the pair $\rho=(\tau,H)$ is called a stopping system.
\end{definition}

We denote by $\mathcal{S}$ the set of all stopping systems and for  fixed stopping times $\sigma,\gamma\in\mathcal{T}$ we denote by $\mathcal{S}_{\sigma,\gamma}$ the set of stopping systems $\rho=(\tau,H)$ such that $\sigma\le\tau\le\gamma$. We put $\mathcal{S}_{\sigma}:=\mathcal{S}_{\sigma,T}$. Note that any stopping time $\tau\in\TT$ can be identified with a  stopping system $(\tau,\Omega)$. Therefore
we may write $\TT\subset \mathcal S$.

For a stopping system $\rho=(\tau,H)$ and for an optional process $X$, we set
\[
X^u_{\rho}=X_{\tau}\mathbf{1}_H+\overleftarrow{X}_{\tau}\mathbf{1}_{H^c},\qquad X^l_{\rho}=X_{\tau}\mathbf{1}_H+\underleftarrow{X}_{\tau}\mathbf{1}_{H^c}.
\]

Repeating step by step the proofs of \cite[Lemma 4.15, Lemma 4.17]{GIOQ2} with using Propositions \ref{stw.fexp} and \ref{sob4} we get the following result.

\begin{theorem}\label{th6.2}
Assume \textnormal{(H1)--(H4)}. Let $(Y,M,R)$ be a solution to  \textnormal{RBSDE}$^T(\xi,f+dV,L,U)$. Then for every $\alpha\in\mathcal{T}$,
\begin{align}
\label{eq.ad8.1.1}
\nonumber Y_{\alpha}&=\esssup_{\rho=(\tau,H)\in\mathcal{S}_{\alpha}}\essinf_{\delta =(\sigma,G)\in\mathcal{S}_{\alpha}}\mathcal{E}^f_{\alpha,\tau\wedge\sigma}
(L^u_{\rho}\mathbf{1}_{\{\tau\le\sigma<T\}}+U^l_{\delta}
\mathbf{1}_{\{\sigma<\tau\}}+\xi\mathbf{1}_{\{\tau=\sigma=T\}})
\\&=\essinf_{\delta=(\sigma,G)\in\mathcal{S}_{\alpha}}\esssup_{\rho
=(\tau,H)\in\mathcal{S}_{\alpha}} \mathcal{E}^f_{\alpha,\tau\wedge\sigma}(L^u_{\rho}\mathbf{1}_{\{\tau\le\sigma<T\}} +U^l_{\delta}\mathbf{1}_{\{\sigma<\tau\}}+\xi\mathbf{1}_{\{\tau=\sigma=T\}}) .
\end{align}
\end{theorem}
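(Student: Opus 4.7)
The plan is to follow closely the argument of \cite[Lemma 4.15, Lemma 4.17]{GIOQ2}, replacing the $L^2$-based tools with the nonlinear $f$-expectation machinery of Proposition \ref{stw.fexp} and the $L^1$-comparison principle of Proposition \ref{sob4}. Abbreviate the Dynkin-game payoff by
\[
\Xi_{\rho,\delta}:=L^u_{\rho}\mathbf{1}_{\{\tau\le\sigma<T\}}+U^l_{\delta}\mathbf{1}_{\{\sigma<\tau\}}+\xi\mathbf{1}_{\{\tau=\sigma=T\}}.
\]
Since one always has $\esssup_\rho\essinf_\delta\le\essinf_\delta\esssup_\rho$, the chain of identities in (\ref{eq.ad8.1.1}) reduces to the two inequalities
\[
Y_\alpha\le\esssup_{\rho\in\mathcal{S}_\alpha}\essinf_{\delta\in\mathcal{S}_\alpha}\mathcal{E}^f_{\alpha,\tau\wedge\sigma}(\Xi_{\rho,\delta}),\qquad Y_\alpha\ge\essinf_{\delta\in\mathcal{S}_\alpha}\esssup_{\rho\in\mathcal{S}_\alpha}\mathcal{E}^f_{\alpha,\tau\wedge\sigma}(\Xi_{\rho,\delta}),
\]
after which both game values coincide with $Y_\alpha$.

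For the first inequality, the strategy is to exhibit an $\varepsilon$-optimal maximizing system of the form
\[
\tau^\varepsilon=\inf\{t\ge\alpha:Y_t\le L_t+\varepsilon\}\wedge T,\qquad H^\varepsilon=\{Y_{\tau^\varepsilon}\le L_{\tau^\varepsilon}+\varepsilon\},
\]
and to check via regulated-trajectory arguments that $\rho^\varepsilon:=(\tau^\varepsilon,H^\varepsilon)\in\mathcal{S}_\alpha$. On the stochastic interval $[\alpha,\tau^\varepsilon)$ the strict inequality $Y>L$ holds, so the minimality condition forces $dR^{*,+}=0$ and $\Delta^+R^+=0$ there. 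Hence on $[\alpha,\tau^\varepsilon\wedge\sigma]$ the process $Y$ satisfies a BSDE whose finite-variation addition is $dV-dR^-$, with $-R^-$ non-increasing; by Proposition \ref{stw.fexp}(i) this makes $Y$ an $\mathcal{E}^f$-submartingale on that interval (after absorbing the non-monotone $V$-contribution by a suitable change of variable). The defining properties of $\tau^\varepsilon,H^\varepsilon$ together with the barrier bound $Y\le U$ and its left-limit counterpart $\overrightarrow{Y}\le\underrightarrow{U}$ yield
\[
Y_{\tau^\varepsilon\wedge\sigma}\le\Xi_{\rho^\varepsilon,\delta}+\eta(\varepsilon),
\]
with $\eta(\varepsilon)\to0$ in $L^1$ uniformly in $\delta$. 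Monotonicity and stability of $\mathcal{E}^f$ (Proposition \ref{stw.fexp}(ii)--(iii)) then give $Y_\alpha\le\mathcal{E}^f_{\alpha,\tau^\varepsilon\wedge\sigma}(\Xi_{\rho^\varepsilon,\delta})+C\eta(\varepsilon)$ for every $\delta$, whence, after taking $\essinf_\delta$ and letting $\varepsilon\downarrow0$, the claim follows.

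The second inequality is exactly dual: the minimizer plays $\sigma^\varepsilon=\inf\{t\ge\alpha:Y_t\ge U_t-\varepsilon\}\wedge T$ with the symmetric choice of $G^\varepsilon$, and on $[\alpha,\sigma^\varepsilon)$ the strict inequality $Y<U$ forces $R^-$ to be constant, so $Y$ becomes an $\mathcal{E}^f$-supermartingale on $[\alpha,\tau\wedge\sigma^\varepsilon]$ and the same monotonicity estimate delivers the reverse bound.

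The main technical obstacle is not any single delicate estimate but the combinatorial bookkeeping imposed by the fact that $L,U,Y$ are only regulated. The minimality condition in the definition of RBSDE$^T$ splits into a c\`adl\`ag part in which $Y_{r-}$ is paired with $\overrightarrow{L}_r,\underrightarrow{U}_r$, and a right-jump part in which $Y_r$ is paired with $L_r,U_r$. Accordingly, the optimal rule has to be a \emph{stopping system}: on $H$ it selects the value $L_\tau$ (stopping ``after'' the right jump of $R^+$), while on $H^c$ it selects the right limit $\overleftarrow{L}_\tau$ (stopping ``just before'' that jump), and symmetrically for $\delta$. Matching these two regimes to the Jordan components $R^{*,\pm}$ and $\Delta^+R^\pm$ is what forces the asymmetric payoff shape $L^u,U^l$ in the statement. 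Once this correspondence is laid out, the estimates above reduce to Propositions \ref{stw.fexp}(ii)--(iii), \ref{sob4} and Theorem \ref{th.ad7.1} applied piecewise, exactly as in \cite{GIOQ2}.
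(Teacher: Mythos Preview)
Your approach is essentially identical to the paper's: the paper's proof consists solely of the sentence ``Repeating step by step the proofs of \cite[Lemma 4.15, Lemma 4.17]{GIOQ2} with using Propositions \ref{stw.fexp} and \ref{sob4} we get the following result,'' and you have expanded exactly that sketch, including the same $\varepsilon$-optimal stopping systems $(\tau^\varepsilon,H^\varepsilon)$ and $(\sigma^\varepsilon,G^\varepsilon)$ that the paper records in the Remark immediately following the statement. One caveat: your aside about ``absorbing the non-monotone $V$-contribution by a suitable change of variable'' is not actually needed and does not quite work as written---the representation formula in the statement contains no $V$, the applications in the paper all take $V\equiv 0$, and the argument should simply be run in that case.
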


\begin{remark}
In general, (\ref{eq.ad8.1.1}) is not true if we replace stopping systems by stopping times.
The proof of (\ref{eq.ad8.1.1}) is much more simpler then the proof of the corresponding result for one barrier (Theorem \ref{th.ad7.1}).
This is due to the fact that in (\ref{eq.ad8.1.1}) we can always indicate  $\varepsilon$-optimal stopping systems
regardless on the regularity of barriers $L,U$. These $\varepsilon$-optimal stopping systems $\rho_\varepsilon=(\tau_\varepsilon,H_\varepsilon),\, \delta_\varepsilon=(\sigma_\varepsilon,G_\varepsilon)$
are given by the following formulas  (see \cite[(4.19)]{GIOQ2}),
\[
\tau_\varepsilon=\inf\{t\ge \alpha;\, Y_t\le L_t+\varepsilon\}\wedge T,\quad \sigma_\varepsilon=\inf\{t\ge \alpha;\, Y_t\ge U_t-\varepsilon\}\wedge T
\]
and
\[
H_\varepsilon=\{\omega\in\Omega;\, Y_{\tau_\varepsilon(\omega)}(\omega)\le L_{\tau_\varepsilon(\omega)}(\omega)+\varepsilon\},\quad
G_\varepsilon=\{\omega\in\Omega;\, Y_{\sigma_\varepsilon(\omega)}(\omega)\ge U_{\sigma_\varepsilon(\omega)}(\omega)+\varepsilon\}.
\]
Note also that formulas of type (\ref{eq.ad8.1.1}) for linear RBSDEs (however without using the notion of RBSDEs) were proved in \cite{A-N}.
\end{remark}

As a corollary to the above theorem we obtain a stability result for solutions to RBSDEs.
Before stating it,  we give  some remarks about the process $\overleftarrow X$.
Assume that $X$ is positive.
From the definition it follows easily  that $\overleftarrow X$ is the smallest progressively measurable process majorizing $X$
which is u.s.c. from the right. Let
\[
S_\alpha= \esssup_{\tau\ge \alpha}E(X_\tau|\FF_\alpha),\quad \alpha\in \TT.
\]
By \cite[page 417]{DM1}, $S$ is the smallest supermartingale majorizing $X$, so  $S$ is u.s.c. from the right and $X\le S$. Thus $\overleftarrow X\le S$. Therefore $\overleftarrow X$ is of class (D) and
\begin{equation}
\label{eq.ibdn1}
\|\overleftarrow X\|_1\le \|S\|_1=\|X\|_1.
\end{equation}

\begin{proposition}
Assume that the data $(\xi_i,f_i,L^i,U^i)$, $i=1,2$, satisfy \mbox{\rm(H1)--(H4)}.
Let $(Y^i,M^i,R^i)$ be a solution to RBSDE$^T(\xi_i,f_i,L^i,U^i),\, i=1,2.$ Then
\begin{align*}
\|Y^1-Y^2\|_1&\le E|\xi_1-\xi_2|+2\|L^1-L^2\|_1+2\|U^1-U^2\|_1\\
&\quad+\sup_{\rho,\delta\in\mathcal S} E\int_0^{\sigma\wedge\tau} |f_1-f_2|(r,\EE^{f_1}_{r,\tau\wedge\sigma}(Z^{\rho,\delta}))\,dr,
\end{align*}
where
\[
Z^{\rho,\delta}= L^u_{\rho}\mathbf{1}_{\{\tau\le\sigma<T\}}
+U^l_{\delta}\mathbf{1}_{\{\sigma<\tau\}}+\xi\mathbf{1}_{\{\tau=\sigma=T\}}.
\]

\end{proposition}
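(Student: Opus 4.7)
The plan is to compare the two Dynkin game representations obtained from Theorem \ref{th6.2} and then appeal to the stability estimate for nonlinear $f$-expectations (Proposition \ref{stw.fexp}(iii)). For a stopping system $\rho=(\tau,H),\,\delta=(\sigma,G)\in \mathcal{S}_\alpha$, write
\[
Z^{\rho,\delta}_i=(L^i)^u_{\rho}\mathbf{1}_{\{\tau\le\sigma<T\}}+(U^i)^l_{\delta}\mathbf{1}_{\{\sigma<\tau\}}+\xi_i\mathbf{1}_{\{\tau=\sigma=T\}},\qquad A^i_{\rho,\delta}=\mathcal{E}^{f_i}_{\alpha,\tau\wedge\sigma}(Z^{\rho,\delta}_i).
\]
By Theorem \ref{th6.2}, $Y^i_\alpha=\esssup_{\rho}\essinf_{\delta}A^i_{\rho,\delta}$, and the elementary inequality $|\sup\inf a-\sup\inf b|\le\sup\sup|a-b|$ (applied to esssup/essinf and justified by the usual $\varepsilon$-approximation argument, using that for each fixed $\rho$ the family $(A^i_{\rho,\delta})_\delta$ is downward directed --- this is exactly the saddle-point argument used in \cite{GIOQ2}) gives
\[
|Y^1_\alpha-Y^2_\alpha|\le \esssup_{\rho,\delta\in\mathcal{S}_\alpha}|A^1_{\rho,\delta}-A^2_{\rho,\delta}|.
\]

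First I would apply Proposition \ref{stw.fexp}(iii) (with $\beta_1=\beta_2=\tau\wedge\sigma$, so the last integral there vanishes) to obtain, for each fixed $\rho,\delta$,
\[
|A^1_{\rho,\delta}-A^2_{\rho,\delta}|\le E\Bigl(|Z^{\rho,\delta}_1-Z^{\rho,\delta}_2|+\int_\alpha^{\tau\wedge\sigma}|f_1-f_2|\bigl(r,\mathcal{E}^{f_1}_{r,\tau\wedge\sigma}(Z^{\rho,\delta}_1)\bigr)\,dr\Big|\mathcal F_\alpha\Bigr).
\]
Taking expectations, using Fubini's theorem for conditional expectations, and then taking $\esssup$ over $\rho,\delta$ (which can be absorbed into the outer $\sup$ after passing to real-valued expectations, at the cost of an arbitrarily small $\varepsilon$ arising from the directedness approximation), we arrive at
\[
E|Y^1_\alpha-Y^2_\alpha|\le \sup_{\rho,\delta\in\mathcal S}E|Z^{\rho,\delta}_1-Z^{\rho,\delta}_2|+\sup_{\rho,\delta\in\mathcal S}E\int_\alpha^{\tau\wedge\sigma}|f_1-f_2|(r,\mathcal{E}^{f_1}_{r,\tau\wedge\sigma}(Z^{\rho,\delta}_1))\,dr.
\]

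The remaining step is to bound $E|Z^{\rho,\delta}_1-Z^{\rho,\delta}_2|$ by $E|\xi_1-\xi_2|+2\|L^1-L^2\|_1+2\|U^1-U^2\|_1$. Using the definition of the projections, the triangle inequality gives
\[
|(L^1)^u_\rho-(L^2)^u_\rho|\le|L^1_\tau-L^2_\tau|\mathbf{1}_H+|\overleftarrow{L^1}_\tau-\overleftarrow{L^2}_\tau|\mathbf{1}_{H^c},
\]
and analogously for $U^l_\delta$ with $\underleftarrow{U^i}$. The key observation (which is the main technical point) is that
\[
|\overleftarrow{L^1}_\tau-\overleftarrow{L^2}_\tau|\le\overleftarrow{|L^1-L^2|}_\tau,
\]
since the left-hand side is a limsup difference and limsup is subadditive in absolute value. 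Applying (\ref{eq.ibdn1}) to the nonnegative optional process of class (D) $|L^1-L^2|$, we obtain $E\overleftarrow{|L^1-L^2|}_\tau\le\|L^1-L^2\|_1$, and together with the trivial bound on $E|L^1_\tau-L^2_\tau|$ we get the doubling factor. The treatment of $U$ is identical (using $\underleftarrow{\cdot}$ and the dual form of (\ref{eq.ibdn1})), and the contribution from the terminal piece is just $E|\xi_1-\xi_2|\mathbf{1}_{\{\tau=\sigma=T\}}\le E|\xi_1-\xi_2|$. Taking $\sup_{\alpha\in\mathcal T}$ on the left finishes the proof, since the right-hand side does not depend on $\alpha$ (the $\sup$ over $\mathcal S$ already dominates the $\sup$ over $\mathcal S_\alpha$).

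I expect the main obstacle to be the justification of the $\sup$-$\sup$ step (moving the esssup inside the expectation) and the bound on $|\overleftarrow{L^1}-\overleftarrow{L^2}|$, since naively subtracting two limsups does not give a pointwise bound by $|L^1-L^2|$ but only by its right-projection, and using (\ref{eq.ibdn1}) is exactly what absorbs this into $\|L^1-L^2\|_1$ and produces the factor $2$.
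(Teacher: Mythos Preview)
Your proposal is correct and follows essentially the same approach as the paper: both use the Dynkin game representation from Theorem~\ref{th6.2}, then apply Proposition~\ref{stw.fexp}(iii) to bound the difference of nonlinear expectations, control $|(L^1)^u_\rho-(L^2)^u_\rho|$ and $|(U^1)^l_\delta-(U^2)^l_\delta|$ via $\overleftarrow{|L^1-L^2|}_\tau$ and $\overleftarrow{|U^1-U^2|}_\sigma$, and finally invoke (\ref{eq.ibdn1}) to obtain the factor $2$. Your write-up is in fact more explicit than the paper's on the $\esssup$/$\essinf$ step and on why the $\limsup$ subadditivity bound holds.
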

\begin{proof}
By Theorem \ref{th6.2}, $Y^i$, $i=1,2$, admits representation (\ref{eq.ad8.1.1}).
Observe that
\[
|L^{1,u}_\rho-L^{2,u}_\rho|\le |L^1_\tau-L^2_\tau|+\overleftarrow{|L^1-L^2|}_\tau,\quad |U^{1,l}_\delta-U^{2,l}_\delta|\le |U^1_\sigma-U^2_\sigma|+\overleftarrow{|L^1-L^2|}_\sigma.
\]
By this, (\ref{eq.ad8.1.1}) and Proposition \ref{stw.fexp},
\begin{align*}
E|Y^1_\alpha-Y^2_\alpha|&\le \sup_{\tau,\sigma\in\TT_\alpha}E\Big(|\xi_1-\xi_2|+|L^1_\tau-L^2_\tau|+\overleftarrow{|L^1-L^2|}_\tau+|U^1_\sigma-U^2_\sigma|+\overleftarrow{|L^1-L^2|}_\sigma\Big)\\&
\quad+\sup_{\rho,\delta\in\mathcal S_\alpha}E\int_\alpha^{\sigma\wedge\tau}|f_1-f_2|(r,\EE^{f_1}_{r,\tau\wedge\sigma}(Z^{\rho,\delta}))\,dr.
\end{align*}
Using now  (\ref{eq.ibdn1}) we get the  desired result.
\end{proof}


\subsubsection{Reflected BSDEs with two optional barriers}

\begin{theorem}\label{th5}
Assume that \textnormal{(H1)--(H4), (H6*)} are satisfied. Then there exists a unique solution $(Y,M,R)$ of \textnormal{RBSDE}$^T(\xi,f+dV,L,U)$ such that $Y$ is of class \textnormal{(D)}. Moreover, $R\in \VV^1_p$, $M$ is a martingale of class \mbox{\rm(D)} on $[0,T]$, and
\begin{equation}
\label{th5.2.ab}
E\int_0^T|f(r,Y_r)|\,dr<\infty.
\end{equation}
\end{theorem}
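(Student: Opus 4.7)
My plan is to establish uniqueness first, which is immediate from Proposition \ref{sob4}, and then to construct a solution via the nonlinear decoupling system (\ref{intr12}) going back to Bismut. The key observation is that a solution $(Y,M,R)$ of the two-barrier problem, with Jordan decomposition $R = R^+ - R^-$, can be recovered from a pair of one-barrier RBSDE solutions $(Y^1,M^1,R^1)$ and $(Y^2,M^2,R^2)$ via $Y = Y^1 - Y^2$, where $R^2 = R^-$ absorbs the upper reflection, $R^1 = R^+$ carries the lower reflection, $Y^2_T = 0$ and $Y^1_T = \xi$. The two-barrier constraints $L \le Y \le U$ become $Y^2 + L \le Y^1$ and $Y^1 - U \le Y^2$, which are one-sided barriers for each of $Y^1, Y^2$.

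To construct the decoupling pair I would run a Picard iteration based on Theorem \ref{th4}. Starting from a $Y^{1,0}$ chosen small enough using the Doob--Meyer decomposition of the process $X$ from (H6*), I would alternately define $Y^{2,n+1}$ as the first component of the solution of {\underline{R}BSDE}$^T(0,0,Y^{1,n}-U)$, and $Y^{1,n+1}$ as the first component of the solution of {\underline{R}BSDE}$^T(\xi,f(\cdot,\cdot-Y^{2,n+1})+dV,L+Y^{2,n+1})$. Once $Y^{1,0} \le Y^{1,1}$, Proposition \ref{sob4} combined with (H2) yields that both sequences $\{Y^{1,n}\}$ and $\{Y^{2,n}\}$ are monotone increasing: raising $Y^{1,n}$ raises the barrier $Y^{1,n}-U$ hence $Y^{2,n+1}$; raising $Y^{2,n+1}$ raises both the barrier $L+Y^{2,n+1}$ and the generator $y\mapsto f(\cdot,y-Y^{2,n+1})$, hence $Y^{1,n+1}$. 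Using (H6*) and Proposition \ref{prop3}, the iterates remain uniformly in class (D).

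Passing to the monotone limits $Y^1,Y^2$, I would mimic the argument from the proof of Theorem \ref{th4} (dominated convergence combined with \cite[Lemmas 3.1--3.3]{GIOQ}) to pass to the limit in the Snell envelope representations from Proposition \ref{prop2} and conclude that the limits solve the corresponding one-barrier RBSDEs with the coupling fixed. Setting $(Y,M,R) := (Y^1,M^1,R^1) - (Y^2,M^2,R^2)$, the equation and $L \le Y \le U$ are immediate by linearity. The minimality condition in the definition of RBSDE$^T(\xi,f+dV,L,U)$ follows from the Skorokhod conditions of the two subproblems, since $R^1$ charges only $\{Y=L\}$ and $R^2$ charges only $\{Y=U\}$, forcing $R^1,R^2$ to be mutually singular and thus to form the Jordan decomposition $R^+ = R^1$, $R^- = R^2$. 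Finally, $R\in\VV^1_p$, $M$ uniformly integrable of class (D), and (\ref{th5.2.ab}) follow by applying the a priori estimates of Proposition \ref{prop3} to each subproblem, with the process $X$ from (H6*) playing the role of the dominating process required by that proposition.

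The main obstacle I expect is twofold. First, the proper initialization of the monotone iteration: under (H6*) alone, one must construct starting data and uniform class-(D) bounds from the Doob--Meyer decomposition of $X$ and the condition $E\int^T_0 |f(r,X_r)|\,dr < \infty$, which is precisely what allows Proposition \ref{prop3} to be invoked uniformly in $n$. Second, carefully disentangling the combined minimality condition from the two individual Skorokhod conditions --- in particular confirming that $R^1, R^2$ charge disjoint predictable sets both in their c\`adl\`ag parts $R^{i,*}$ and their purely right-jump parts $R^{i,d}$ --- is a delicate book-keeping step relying on $Y=L$ and $Y=U$ being disjoint events thanks to the Mokobodzki separation.
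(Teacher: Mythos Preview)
Your overall strategy---Bismut's decoupling system solved by monotone Picard iteration on one-barrier problems, then subtracting---is exactly the paper's approach, and your identification of the two delicate points is accurate. Two details in your proposed execution need correction, however.

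First, the uniform class-(D) bounds on $Y^{1,n},Y^{2,n}$ do \emph{not} come from Proposition~\ref{prop3}: that proposition bounds $E\int|f|$ and $EK_T$ in terms of $\|Y\|_1$, not the other way round. The paper instead constructs explicit dominating processes from (H6*). Writing the Doob--Meyer decomposition of $X$ as $X_t=X_{T_a}+\int_t^{T_a}dC_r-\int_t^{T_a}dH_r$ and setting $C'_t=-\int_0^t f(r,X_r)\,dr-V_t+C_t$, one lets $\tilde X^1$ solve BSDE$^T(X_T\vee\xi,f+dV+dC'^+)$ and $\tilde X^2$ solve BSDE$^T(0,dC'^-)$. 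Then $(\tilde X^1,\tilde H^1,0)$ solves the one-barrier problem with generator $\tilde f(r,y)=f(r,y-\tilde X^2_r)$ and barrier $L+\tilde X^2$, and $(\tilde X^2,\tilde H^2,0)$ solves the one with barrier $\tilde X^1-U$; comparison (Proposition~\ref{sob4}) then gives $Y^{1,n}\le\tilde X^1$, $Y^{2,n}\le\tilde X^2$ by induction.

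Second, your plan to deduce minimality by proving $K^1,K^2$ mutually singular (via disjointness of $\{Y=L\}$ and $\{Y=U\}$) is both unnecessary and potentially fails: (H6*) does not force $L<U$, so these sets need not be disjoint, and nothing guarantees $K^1=R^+$, $K^2=R^-$. The paper's argument is simpler. Since the Jordan decomposition is minimal, $dR^+\le dK^1$ and $dR^-\le dK^2$; because $Y^2$ is regulated, $\overrightarrow{L+Y^2}_r=\overrightarrow{L}_r+Y^2_{r-}$, so the nonnegative integrand $Y_{r-}-\overrightarrow{L}_r$ equals $Y^1_{r-}-\overrightarrow{L+Y^2}_r$, and
\[
\int_0^{T_a}(Y_{r-}-\overrightarrow{L}_r)\,dR^{+,*}_r\le\int_0^{T_a}(Y^1_{r-}-\overrightarrow{L+Y^2}_r)\,dK^{1,*}_r=0,
\]
with the analogous bound for the jump part and for $R^-$. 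Finally, for (\ref{th5.2.ab}) the paper does not apply Proposition~\ref{prop3} to the coupled subproblems (where verifying (H5) is awkward) but rather sandwiches $\overline Y\le Y\le\underline Y$ between the solutions of the \emph{uncoupled} one-barrier problems $\overline{\rm R}$BSDE$^T(\xi,f+dV,U)$ and $\underline{\rm R}$BSDE$^T(\xi,f+dV,L)$, for which (H5) holds directly with the process $X$ from (H6*).
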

\begin{proof}
Let $(Y^{1,0},M^{1,0})$ be a solution of BSDE$^T(\xi$,$f+dV)$ such that $Y^{1,0}$ is of class (D), and let $(Y^{2,0},Z^{2,0})=(0,0)$. Next, for each $n\ge 1$, let $(Y^{1,n},M^{1,n},K^{1,n})$ be a solution of \underline{R}BSDE$^T(\xi$,$f_n+dV$,$L+Y^{2,n-1})$ with
\[
f_n(r,y)=f(r,y-Y^{2,n-1}_r),
\]
and let $(Y^{2,n}, Z^{2,n},K^{2,n})$ be a solution of \underline{R}BSDE$^T$($0$,$0$,$Y^{1,n-1}-U$) such that $Y^{1,n},Y^{2,n}$ are of class (D). The existence of such  solutions follows from Theorem \ref{th4} (see also Remark \ref{rem.wlog1}).
We will show by induction that the sequences $(Y^{1,n})_{n\ge 0}$, $(Y^{2,n})_{n\ge 0}$ are increasing. Clearly  $Y^{1,1}\ge Y^{1,0}$ and $Y^{2,1}\ge Y^{2,0}$. Suppose that for fixed $n\in\mathbb{N}$, $Y^{1,n}\ge Y^{1,n-1}$ and $Y^{2,n}\ge Y^{2,n-1}$. Using this and (H2) we infer  that $f_{n+1}\ge f_n$ and $L+Y^{2,n}\ge L+Y^{2,n-1}$. Hence by Proposition \ref{sob4}, $Y^{1,n+1}\ge Y^{1,n}$. By a similar argument, $Y^{2,n+1}\ge Y^{2,n}$. Thus $(Y^{1,n})_{n\ge 0}$ and $(Y^{2,n})_{n\ge 0}$ are increasing.
We next show that $Y^1:=\sup_{n\ge 1}Y^{1,n}$ and  $Y^2:=\sup_{n\ge 1}Y^{2,n}$ are of class (D). By (H6), there exists a process $X$ such that $X$ is of class (D), $X\ge L$ and $E\int^T_0 |f(r,X_r)|\,dr<\infty$. There exist processes $H\in\mathcal{M}_{loc}$ and $C\in\mathcal{V}^1_p$ such that for every $a\ge0$,
\[
X_t=X_{T_a}+\int^{T_a}_t\,dC_r-\int^{T_a}_t\, dH_r,\quad t\in[0,T_a].
\]
This equation can be can be rewritten in the form
\[
X_t=X_{T_a}+\int^{T_a}_t f(r,X_r)\,dr+\int^{T_a}_t\,dV_r+\int^{T_a}_t\,dC'_r
-\int^{T_a}_t\, dH_r,
\]
where $C'_t=-\int^t_0 f(r,X_r)\,dr-V_t+C_t$. Let $(\tilde{X}^1,\tilde{H}^1)$ be a solution of BSDE$^T(X_T\vee\xi,f+dV+dC'^+)$ and $(\tilde{X}^2,\tilde{H}^2)$ be a solution of BSDE$^T(0,dC'^-)$, such that $\tilde{X}^1,\tilde{X}^2$ are of class (D). The existence of these solutions follows from Theorem \ref{th4}. Set $\tilde{X}=\tilde{X}^1-\tilde{X}^2$. Observe that $(\tilde{X}^1,\tilde{H}^1,0)$ is a solution of \underline{R}BSDE$^T(X_T\vee \xi$, $\tilde{f}+dV+dC'^+$, $L+\tilde{X}^2)$ with $\tilde{f}(r,x)=f(r,x-\tilde{X}^2_r)$ and $(\tilde{X}^2,\tilde{H}^2,0)$ is a solution of \underline{R}BSDE$^T$($0$, $dC'^-$, $\tilde{X}^1-U$). We will show by induction for each $n\in\mathbb{N}$, $\tilde{X}^1\ge Y^{1,n}$ and $\tilde{X}^2\ge Y^{2,n}$. Let us show this property by induction. Let $n=0$. Since $\tilde{X}^2\ge 0$, it follows from (H2) that $\tilde{f}\ge f$. Hence, by Proposition \ref{sob4}, $\tilde{X}^1\ge Y^{1,0}$. Since $Y^{2,0}=0$, it is clear that
$\tilde{X}^2\ge Y^{2,0}$. Suppose that for  fixed $n\in\mathbb{N}$, $\tilde{X}^1\ge Y^{1,n}$ and $\tilde{X}^2\ge Y^{2,n}$. Using this and (H2) we conclude that $\tilde{f}\ge f_{n+1}$, $L+\tilde{X}^2\ge L+Y^{2,n}$ and $\tilde{X}-U\ge Y^{1,n}-U$. Hence, by Proposition \ref{sob4}, $\tilde{X}^2\ge Y^{1,n+1}$, $\tilde{X}^2\ge Y^{2,n+1}$. Therefore,  for each $n\in\mathbb{N}$, $\tilde{X}^1\ge Y^{1,n}$ and $\tilde{X}^2\ge Y^{2,n}$. We have
\begin{equation}
\label{th5.1}
Y^{1,0}\le Y^{1,n}\le \tilde{X}^1,\quad Y^{2,0}\le Y^{2,n}\le \tilde{X}^2,
\end{equation}
so $Y^1,Y^2$ are of class (D) and
\begin{equation}\label{th5.1.1.1}
\xi\le\lim_{a\rightarrow\infty}Y^1_{T_a}\quad\lim_{a\rightarrow\infty}Y^2_{T_a}=0.
\end{equation}
We will show that there exist $M^1,M^2\in\mathcal{M}_{loc}$ , $K^1,K^2\in\mathcal{V}^+$ such that $(Y^1,M^1,K^1)$ is a solution of \underline{R}BSDE$^T$($\xi$,$\hat{f}+dV$,$L+Y^2$) with
\[
\hat{f}(r,y)=f(r,y-Y^2_r)
\]
and $(Y^2,M^2,K^2)$ is a solution of \underline{R}BSDE$^T$($0$,$0$,$Y^1-U$). By (H3),
\begin{equation}\label{th5.1.1}
f(r,Y^{1,n}_r-Y^{2,n-1}_r)\rightarrow f(r,Y^1_r-Y^2_r)
\end{equation}
as  $n\rightarrow\infty$. Moreover, by (H2) and (\ref{th5.1}),
\begin{equation}
\label{th5.2}
f(r,X^1_r)\le f(r,Y^{1,n}_r-Y^{2,n-1}_r)\le f(r,Y^{1,0}_r-X^2_r).
\end{equation}
Set
\[
\tau_k=\inf\{t\ge 0:\int^t_0|f(r,Y^{1,0}_r-X^2_r)|+|f(r,X^1_r)|\,dr\ge k\}\wedge T.
\]
Then $\{\tau_k\}$ is a chain on $[0,T]$ and  $(Y^{1,n},Z^{1,n},K^{1,n})$ is a solution of \underline{R}BSDE$^{\tau_k}(Y^{1,n}_{\tau_k}$,$f_n+dV$,$L+Y^{2,n-1}$). Hence,  by Proposition \ref{prop2}, for every $\sigma\in\mathcal{T}_{\tau_k}$ we have
\begin{align}
\label{th5.3}
Y^{1,n}_{\sigma}&=\esssup_{\tau\in\mathcal{T}_{\sigma,\tau_k}}E\Big(\int^{\tau}_{\sigma} f(r,Y^{1,n}_r-Y^{2,n-1}_r)\,dr+\int^{\tau}_{\sigma}\,dV_r \nonumber \\
&\quad +(L_{\tau}+Y^{2,n-1}_{\tau})\mathbf{1}_{\{\tau<\tau_k\}}
+Y^{1,n}_{\tau_k}\mathbf{1}_{\{\tau=\tau_k\}}|\mathcal{F}_{\sigma}\Big).
\end{align}
By the definition of $\tau_k$, (\ref{th5.1.1}) and (\ref{th5.2}),
\begin{equation}\label{th5.4}
E\int^{\tau_k}_0|f(r,Y^{1,n}_r-Y^{2,n-1}_r)-f(r,Y^1_r-Y^2_r)|\,dr\rightarrow 0
\end{equation}
as $n\rightarrow\infty$. By (\ref{th5.1}), (\ref{th5.3}), (\ref{th5.4}) and \cite[Lemma 3.19]{KRzS},
\begin{align*}
Y^1_{\sigma}&=\esssup_{\tau\in\mathcal{T}_{\sigma,\tau_k}}E\Big(\int^{\tau}_{\sigma} f(r,Y^1_r-Y^2_r)\,dr+\int^{\tau}_{\sigma}\,dV_r \\
&\quad+(L_{\tau}+Y^2_{\tau})\mathbf{1}_{\{\tau<\tau_k\}}
+Y^1_{\tau_k}\mathbf{1}_{\{\tau=\tau_k\}}|\mathcal{F}_{\sigma}\Big).
\end{align*}
for every $\sigma\in\mathcal{T}_{\tau_k}$. By \cite[Lemma 3.8]{Kl},
$\lim_{a\rightarrow\infty}Y^1_{\tau_k\wedge a}\le Y^1_{\tau_k}$,
and since $\{\tau_k\}$ is a chain,
$\lim_{a\rightarrow\infty}Y^1_{T_a}\le \xi$. By this and
(\ref{th5.1.1.1}),
\[
\lim_{a\rightarrow\infty}Y_{T_a}= \xi.
\]
As for $Y^2$, by Proposition \ref{prop2}, for all $n\ge1$ and  $\sigma\in\mathcal{T}_T$,
\[
Y^{2,n}_{\sigma}=\esssup_{\tau\in \mathcal{T}_{\sigma}}E\Big((Y^{1,n-1}_{\tau}-U_{\tau})
\mathbf{1}_{\{\tau<T\}}|\mathcal{F}_{\sigma}\Big).
\]
Letting $n\rightarrow\infty$ and using (\ref{th5.1}) we get
\[
Y^2_{\sigma}=\esssup_{\tau\in \mathcal{T}_{\sigma}}E\Big((Y^1_{\tau}-U_{\tau})
\mathbf{1}_{\{\tau<T\}}|\mathcal{F}_{\sigma}\Big).
\]
By \cite[Lemma 3.1, Lemma 3.2]{GIOQ}, $Y^1$ is regulated and there exist $K^k\in\mathcal{V}^+_p$ and $M^k\in\mathcal{M}_{loc}$ such that for every $a\ge0$,
\[
Y^1_t=Y^1_{\tau_k\wedge a}+\int^{\tau_k\wedge a}_t f(r,Y^1_r-Y^2_r)\,dr+\int^{\tau_k\wedge a}_t\,dV_r+\int^{\tau_k\wedge a}_t\,dK^{1,k}_r-\int^{\tau_k\wedge a}_t \,dM^k_r
\]
for $t\in[0,\tau_k\wedge a]$. Furthermore,  by \cite[Lemma 3.3]{GIOQ} and \cite[Proposition 2.34, p.~131]{EK},
\[
\int^{\tau_k\wedge a}_0(Y^1_{r-}-\overrightarrow{L_r+Y^2_r})\,dK^{1,k,*}_r
+\sum_{0\le r<\tau_k\wedge a}(Y^1_r-(L_r+Y^2))\Delta^+K^{1,k}_r=0
\]
for $a\ge0$. Therefore $(Y^1,Z^{1,k},K^{1,k})$ is a solution of \underline{R}BSDE$^{\tau_k}(Y^1_{\tau_k},\hat{f}+dV,L+Y^2$). By uniqueness, for every $a\ge$, $K^{1,k}_t=K^{1,k+1}_t$, $M^{1,k}_t=M^{1,k+1}_t,\,t\in[0,\tau_k\wedge a]$. Therefore, since $\{\tau_k\}$ is a chain, we can define processes $M^1$ and $K^1$ on the intervals $[0,T_a]$ by putting $M^1_t=M^{1,k}_t$, $K^1_t=K^{1,k}_t$, $t\in[0,\tau_k\wedge a]$, $a\ge 0$. From this and the fact that $\lim_{a\rightarrow\infty}Y^1_{T_a}=\xi$ it follows that $(Y^1,Z^1,K^1)$ is a solution of \underline{R}BSDE$^T(\xi,\hat{f}+dV,L+Y^2)$. By \cite[Lemma 3.1, Lemma 3.2]{GIOQ} again, $Y^2$ is regulated and  there exist $K^2\in\mathcal{V}^+_p,M^2\in\mathcal{M}_{loc}$ such that for every $a\ge0$,
\[
Y^2_t=\int^{T_a}_t\,dK^2_r-\int^{T_a}_t \,dM^2_r,\quad t\in[0,T_a].
\]
By \cite[Lemma 3.3]{GIOQ} and \cite[Proposition 2.34, p.~131]{EK}, for every $a\ge0$,
\[
\int^{T_a}_0(Y^2_{r-}-\overrightarrow{Y^1_r-U_r})\,dK^{2,*}_r
+\sum_{0\le r<T_a}(Y^2_r-(Y^1_s-U_s))\Delta^+K^2_r=0.
\]
Since $\lim_{a\rightarrow\infty}Y^2_{T_a}=0$, we see  that $(Y^2,M^2,K^2)$ is a solution of \underline{R}BSDE$^T(0,0,Y^1-U)$.
Write $Y=Y^1-Y^2$, $M=M^1-M^2$, $R=K^1-K^2$. We shall show that $(Y,Z,R)$ is a solution of RBSDE$^T$($\xi$,$f+dV$,$L$,$U$). Observe that for every $a\ge0$,
\[
Y_t=Y_{T_a}+\int^{T_a}_t f(r,Y_r)\,dr+\int^{T_a}_t\,dV_r+\int^{T_a}_t\,dR_r-\int^{T_a}_t \,dM_r,\quad t\in[0,T_a],
\]
and $\lim_{a\rightarrow\infty}Y_{T_a}=\xi$. Clearly $L\le Y\le U$. Furthermore, $R$ satisfies the minimality condition because for every $a\ge0$ we have
\begin{align*}
&\int^{T_a}_0 (Y_{r-}-\overrightarrow{L}_r)\,dR^{+,*}_r+\sum_{0\le r<T_a}(Y_r-L_r)\Delta^+R^+_r\\
&\quad\le \int^{T_a}_0 (Y^1_{r-}-\overrightarrow{L_r+Y^2_r})\,dK^{1,*}_r+\sum_{0\le r<T_a}(Y^1_r-(L_r+Y^2_r))\Delta^+K^1_r=0
\end{align*}
and
\begin{align*}
&\int^{T_a}_0 (\underrightarrow{U}_r-Y_{r-})\,dR^{-,*}_r+\sum_{0\le r<T_a}(U_r-Y_r)\Delta^+R^-_r\\
&\quad\le \int^{T_a}_0 (Y^2_{r-}-\overrightarrow{Y^1_r-U_r})\,dK^{2,*}_r+\sum_{0\le r<T_a}(Y^2_r-(Y^1_r-U_r))\Delta^+K^2_r=0,
\end{align*}
Since $Y^1,Y^2$ are of class (D),  $Y$ is of class (D), too.  Uniqueness of the solution follows from Proposition \ref{sob4}.
Finally, by Proposition \ref{sob4},
$\overline Y\le Y\le \underline Y$,
where $\underline Y$ (resp. $\overline Y$) is a solution to \underline{R}BSDE$^T(\xi,f+dV,L$) (resp. $\overline{\rm{R}}$BSDE$^T(\xi,f+dV,U)$). Therefore (\ref{th5.2.ab}) follows from (H2) and Proposition \ref{prop3}.
\end{proof}

\begin{proposition}\label{prop6.1}
Assume that $f,f_n$, $n\ge 1$, satisfy \textnormal{(H1)--(H4)} and $f_n\nearrow f$ as $n\rightarrow\infty$. Let $\{L^n\}$
be a sequence of optional processes of class \mbox{\rm(D)} on $[0,T]$ such that $L^n\nearrow L$.
\begin{enumerate}
\item[\rm(i)] Let $(Y,M,K)$ (resp. $(Y^n,M^n,K^n)$) be a solution to $\underline{\rm{R}}$\textnormal{BSDE}$^T(\xi,f,L)$ (resp. $\underline{\rm{R}}$\textnormal{BSDE}$^T(\xi,f,L_n)$). Then $Y^n\nearrow Y$.

\item[\rm(ii)]
Let $(Y,M,R)$ (resp. $(Y^n,M^n,R^n)$) be a solution of \textnormal{RBSDE}$^T(\xi,f+dV,L,U)$ (resp. \textnormal{RBSDE}$^T(\xi,f_n+dV,L,U)$) such that $Y$ (resp. $Y^n$) is of class \textnormal{(D)}. Then $\|Y-Y^n\|_1\rightarrow 0$ as $n\rightarrow\infty$.
\end{enumerate}
\end{proposition}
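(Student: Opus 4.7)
Proposition \ref{sob4} yields $Y^n\le Y^{n+1}\le Y$, so $Y^n\nearrow\tilde Y\le Y$. For the reverse inequality I would use the Snell-envelope representation of Theorem \ref{th.ad7.1}, which writes
\[
Y_\alpha=\esssup_{\tau\in\TT_\alpha}\EE^f_{\alpha,\tau}(L^\xi_\tau),\qquad Y^n_\alpha=\esssup_{\tau\in\TT_\alpha}\EE^f_{\alpha,\tau}(L^{n,\xi}_\tau),
\]
with $L^\xi_\tau=L_\tau\mathbf{1}_{\tau<T}+\xi\mathbf{1}_{\tau=T}$ and analogously for $L^n$. Specializing Proposition \ref{stw.fexp}(iii) to $f_1=f_2=f$ and $\beta_1=\beta_2=\tau$ gives
\[
0\le \EE^f_{\alpha,\tau}(L^\xi_\tau)-\EE^f_{\alpha,\tau}(L^{n,\xi}_\tau)\le E\bigl((L_\tau-L^n_\tau)\mathbf{1}_{\tau<T}\,\big|\,\FF_\alpha\bigr),
\]
and the right-hand side tends to $0$ in $L^1$ by $L^n\nearrow L$ and the class (D) property of $L$. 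Passing to the limit first at fixed $\tau$ in $Y^n_\alpha\ge \EE^f_{\alpha,\tau}(L^{n,\xi}_\tau)$ and then taking $\esssup_{\tau}$ yields $\tilde Y_\alpha\ge Y_\alpha$, hence equality.

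\textbf{Part (ii).} The plan is to apply the stability proposition proved just above to $(\xi,f,L,U)$ and $(\xi,f_n,L,U)$. Since $f_n\le f$ (so the absolute value in the integrand collapses) and the other data coincide, we obtain
\[
\|Y-Y^n\|_1\le \sup_{\rho,\delta\in\mathcal S} E\int_0^{\tau\wedge\sigma}(f-f_n)\bigl(r,\EE^f_{r,\tau\wedge\sigma}(Z^{\rho,\delta})\bigr)\,dr,
\]
and it remains to show the right-hand side tends to zero. Writing $\nu=\tau\wedge\sigma$, I would first produce a single positive supermartingale $W$ of class (D) with $|\EE^f_{r,\nu}(Z^{\rho,\delta})|\le W_r$ uniformly in $\rho,\delta,\nu$: from $|Z^{\rho,\delta}|\le X_\nu$ with $X_t:=|L_t|+|\overleftarrow{L}_t|+|U_t|+|\overleftarrow{U}_t|+E(|\xi|\,|\,\FF_t)$ (of class (D) by \cite[Theorem 90, page 143]{dm} and (\ref{eq.ibdn1})), the monotonicity of $\EE^f$ (Proposition \ref{stw.fexp}(ii)) gives the sandwich $\EE^f_{r,\nu}(-X_\nu)\le\EE^f_{r,\nu}(Z^{\rho,\delta})\le\EE^f_{r,\nu}(X_\nu)$, and Lemma \ref{lm.ad3.ilace} dominates these endpoints by a common supermartingale $W$.

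With this uniform bound, the monotonicity of $f$ in $y$ gives $(f-f_n)(r,\EE^f_{r,\nu}(Z^{\rho,\delta}))\le \bar\eta_n(r):=\sup_{|y|\le W_r}(f-f_n)(r,y)$, and Dini's theorem on the random compact interval $[-W_r,W_r]$ yields $\bar\eta_n\searrow 0$ pointwise in $(\omega,r)$. I would then take a chain $\{\kappa_k\}$ constructed as in the proof of Lemma \ref{lm.ad3.ilace} and split
\[
E\int_0^\nu(f-f_n)(r,\cdot)\,dr\le E\int_0^{\kappa_k}\bar\eta_n(r)\,dr+\sup_{\rho,\delta}E\int_{\kappa_k\wedge\nu}^\nu\bigl(|f|+|f_1|\bigr)\bigl(r,\EE^f_{r,\nu}(Z^{\rho,\delta})\bigr)\,dr.
\]
For fixed $k$ the first term vanishes as $n\to\infty$ by dominated convergence, with integrable dominant $(f-f_1)(r,-W_r)$ on $[0,\kappa_k]$. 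For the tail, the monotonicity bound $|f(r,\EE^f_{r,\nu}(Z^{\rho,\delta}))|\le|f(r,\EE^f_{r,\nu}(X_\nu))|+|f(r,\EE^f_{r,\nu}(-X_\nu))|$ (and its analogue for $f_1$) together with Lemma \ref{lm.ad3.ilace} applied to $f$ and to $f_1$ with the processes $\pm X$ drives the tail to $0$ as $k\to\infty$, uniformly in $\rho,\delta,n$. Choosing $k$ first and then $n$ finishes the proof. The hard part will be precisely this last uniformity: since $\int_0^T|f(r,y)|\,dr$ is not globally in $L^1(P)$, one cannot apply dominated convergence on all of $[0,T]$, so the chain-and-truncation via Lemma \ref{lm.ad3.ilace} is indispensable.
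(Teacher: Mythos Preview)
Your Part (i) is essentially the paper's argument: both use the nonlinear Snell envelope representation (Theorem \ref{th.ad7.1}), comparison to get $Y^n\nearrow \tilde Y\le Y$, and then the stability estimate Proposition \ref{stw.fexp}(iii) to pass to the limit at fixed $\tau$ before taking the $\esssup$.

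For Part (ii) you take a genuinely different route from the paper. The paper does \emph{not} go through the Dynkin-game stability proposition; instead it applies the Meyer--Tanaka inequality (\cite[Corollary~A.5]{KRzS}) directly to $|Y-Y^n|$, uses the minimality conditions for $R$ and $R^n$ to annihilate the reflection contributions, and obtains
\[
|Y_\sigma-Y^n_\sigma|\le |Y_\tau-Y^n_\tau|+\int_\sigma^\tau |f(r,Y_r)-f_n(r,Y_r)|\,dr - \text{(local martingale)}.
\]
The integrand here is evaluated at the \emph{fixed} process $Y$, so dominated convergence along the chain $\sigma_k=\inf\{t:\int_0^t(|f_1(r,Y_r)|+|f(r,Y^1_r)|)\,dr\ge k\}\wedge T$ is immediate, and no uniformity over stopping systems is ever needed.

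Your approach via the stability proposition is conceptually natural, but the tail estimate has a gap. Lemma \ref{lm.ad3.ilace} as stated---and as proved, via Lemma \ref{lm.ad3.1}---controls $\sup_\tau E\int_{\tau\wedge\delta_n}^\tau |g(r,\EE^g_{r,\tau}(X_\tau))|\,dr$ only when the generator $g$ in the integrand \emph{matches} the one defining the nonlinear expectation; the Meyer--Tanaka bound of Lemma \ref{lm.ad3.1} requires $(Y,M)$ to solve a BSDE with that same $g$. You invoke it for $|f_1(r,\EE^f_{r,\nu}(\pm X_\nu))|$, where the mismatch $f_1\ne f$ breaks this mechanism, so the tail is not shown to vanish uniformly in $n$ and $\rho,\delta$. (A minor point on the truncated part: your proposed dominant $(f-f_1)(r,-W_r)$ is not valid either, since $f-f_1$ is a difference of two nonincreasing functions and need not be monotone in $y$; the correct envelope is $|f(r,W_r)|+|f(r,-W_r)|+|f_1(r,W_r)|+|f_1(r,-W_r)|$.) The paper's direct Tanaka-based argument sidesteps all of this precisely because the drift is evaluated at the single process $Y$ rather than at the family $\EE^f_{r,\nu}(Z^{\rho,\delta})$.
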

\begin{proof}
(i) By Theorem \ref{th.ad7.1} and Proposition \ref{stw.fexp}(ii), for every $\alpha\in\TT$,
\[
Y^n_\alpha=\esssup_{\tau\in\TT_\alpha}\EE^f_{\alpha,\tau}(L^n_\tau\mathbf{1}_{\tau<T}
+\xi\mathbf{1}_{\tau=T})\le
\esssup_{\tau\in\TT_\alpha}\EE^f_{\alpha,\tau}
(L_\tau\mathbf{1}_{\tau<T}+\xi\mathbf{1}_{\tau=T})=Y_\alpha.
\]
By Proposition \ref{stw.fexp}(ii), $Y^n\le Y^{n+1},\, n\ge 1$. Set $X:=\sup_{n\ge 1}Y^n$. Then $X\le Y$. 
For the opposite inequality first observe that for all $\alpha\in\TT$ and $\tau\in\TT_\alpha$,
\[
X_\alpha\ge \esssup_{\tau\in\TT_\alpha}\EE^f_{\alpha,\tau}(L^n_\tau\mathbf{1}_{\tau<T}
+\xi\mathbf{1}_{\tau=T})\ge \EE^f_{\alpha,\tau}(L^n_\tau\mathbf{1}_{\tau<T}+\xi\mathbf{1}_{\tau=T}),\quad n\ge 1.
\]
By Proposition \ref{stw.fexp}(iii), $\EE^f_{\alpha,\tau}(L^n_\tau\mathbf{1}_{\tau<T}+\xi\mathbf{1}_{\tau=T})\rightarrow \EE^f_{\alpha,\tau}(L_\tau\mathbf{1}_{\tau<T}+\xi\mathbf{1}_{\tau=T})$ a.s. Hence
\[
X_\alpha\ge \EE^f_{\alpha,\tau}(L_\tau\mathbf{1}_{\tau<T}+\xi\mathbf{1}_{\tau=T})
\]
for all $\alpha\in\TT$ and $\tau\in\TT_\alpha$. Thus
\[
X_\alpha\ge \esssup_{\tau\in\TT_\alpha}\EE^f_{\alpha,\tau}(L_\tau\mathbf{1}_{\tau<T}
+\xi\mathbf{1}_{\tau=T})=Y_\alpha,\quad \alpha\in\TT.
\]
(ii) For each $n\ge 1$, $f_n\le f_{n+1}$, so by Proposition \ref{sob4}, $Y^n\le Y^{n+1}$. Set $\tilde{Y}:=\sup_{n\ge 1}Y^n$. By Proposition \ref{sob4}, for each $n\ge 1$,
\begin{equation}\label{prop6.1.0}
Y^1\le Y^n\le Y,
\end{equation}
so $\tilde{Y}$ is of class (D) and $\lim_{a\rightarrow\infty}\tilde{Y}_{T_a}=\xi$.
We will show that $Y=\tilde{Y}$. By (H2),
\begin{equation}\label{prop6.1.1}
\sgn(Y_r-Y^n_r)(f(r,Y_r)-f_n(r,Y^n_r))\le|f(r,Y_r)-f_n(r,Y_r)|.
\end{equation}
By the minimality conditions for $R$ and $R^n$,
\begin{equation}\label{prop6.1.1.1}
\sgn(Y_{r-}-Y^n_{r-})d(R^1_r-R^2_r)^*
\le\mathbf{1}_{\{Y_{r-}>Y^n_{r-}\}}\,dR^{*,+}+\mathbf{1}_{\{Y_{r-}>Y^n_{r-}\}}\,dR^{n,-,*}=0
\end{equation}
and
\begin{equation}\label{prop6.1.1.2}
\sgn(Y_r-Y^n_r)\Delta^+(R_r-R^n_r)\le\mathbf{1}_{\{Y_r>Y^n_r\}}\Delta^+R_r^++\mathbf{1}_{\{Y_r>Y^n_r\}}\Delta^+R_r^{n,-}=0.
\end{equation}
By \cite[Corollary A.5]{KRzS}, for all $a\ge0$ and stopping times $\sigma,\tau\in\mathcal{T}_{T_a}$ such that $\sigma\le\tau$ we have
\begin{align*}
|Y_{\sigma}-Y^n_{\sigma}|&\le|Y_{\tau}-Y^n_{\tau}|+\int^{\tau}_{\sigma}
\sgn(Y_r-Y^n_r)(f(r,Y_r)-f_n(r,Y^n_r))\,dr\\
&\quad+\int^{\tau}_{\sigma}\sgn(Y_{r-}-Y^n_{r-})\,d(R^1_r-R^2_r)^*+\sum_{\sigma\le r<\tau}\sgn(Y_r-Y^n_r)\Delta^+(R^1_r-R^2_r)\\
&\quad-\int^{\tau}_{\sigma}\sgn(Y_{r-}-Y^n_{r-})\,d(M_r-M^n_r).
\end{align*}
By the above inequality and (\ref{prop6.1.1})-(\ref{prop6.1.1.2}),
\begin{align}\label{prop6.1.3}
|Y_{\sigma}-Y^n_{\sigma}|&\le|Y_{\tau}-Y^n_{\tau}|
+\int^{\tau}_{\sigma}|f(r,Y_r)-f_n(r,Y_r)|\,dr\nonumber\\
&\quad-\int^{\tau}_{\sigma}\sgn(Y_{r-}-Y^n_{r-})\,d(M_r-M^n_r).
\end{align}
By (H2), (\ref{prop6.1.0}) and the fact that  $f_n\le f_{n+1}$ and $f_n\le f$ we have
\begin{equation}\label{prop6.1.4}
f_1(r,Y_r)\le f_n(r,Y_r)\le f(r,Y^1_r).
\end{equation}
Let
\[
\sigma_k=\inf\{t\ge 0:\int^t_0|f_1(r,Y_r)|+|f(r,Y^1_r)|\,dr\ge k\}\wedge T.
\]
Observe that $\{\sigma_k\}$ is a chain on $[0,T]$. By the definition of $\{\sigma_k\}$ and (\ref{prop6.1.4}),
\begin{equation}\label{prop6.1.5}
E\int^{\sigma_k}_0|f(r,Y_r)-f_n(r,Y_r)|\,dr\rightarrow 0
\end{equation}
as $n\rightarrow\infty$. Let $\{\gamma_k\}$ be a localizing sequence on $[0,T_a]$ for the local martingale $\int^{\cdot}_0\sgn(Y_{r-}-Y^n_{r-})\,d(M_r-M^n_r)$. By (\ref{prop6.1.3}) with $\tau$ replaced by $\tau_k=\sigma_k\wedge\gamma_k$, $\tau_k\ge\sigma$ we have
\begin{align}\label{prop6.1.6}
|Y_{\sigma}-Y^n_{\sigma}|&\le|Y_{\tau_k}-Y^n_{\tau_k}|
+\int^{\tau_k}_{\sigma}|f(r,Y_r)-f_n(r,Y_r)|\,dr \nonumber\\
&\quad-\int^{\tau_k}_{\sigma}\sgn(Y_{r-}-Y^n_{r-})\,d(M_r-M^n_r).
\end{align}
Since $Y$ and $Y^n$ are of class (D) and we know that (\ref{prop6.1.5}) holds true,
taking the expectation in (\ref{prop6.1.6}),   letting $n\rightarrow\infty$ and then $k\rightarrow\infty$,  we obtain $E|Y_{\sigma}-\tilde{Y}_{\sigma}|\le E|Y_{T_a}-\tilde{Y}_{T_a}|$ for  $a\ge 0$. Letting now $a\rightarrow\infty$ we get $E|Y_{\sigma}-\tilde{Y}_{\sigma}|=0$. Therefore, by the Section Theorem (see, e.g., \cite[Chapter IV, Theorem 86]{dm}), $|Y_t-\tilde{Y}_t|=0$, $t\in[0,T]$, so $Y=\tilde{Y}$.
\end{proof}

\begin{theorem}\label{th5.5.5.5}
Assume that \textnormal{(H1)--(H4), (H6)} are satisfied. Then there exists a unique solution $(Y,M,R)$ to \textnormal{RBSDE}$^T(\xi,f+dV,L,U)$ such that $Y$ is of class \textnormal{(D)}.
\end{theorem}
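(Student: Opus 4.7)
The plan is to extend Theorem~\ref{th5} by approximating $f$ so that (H6*) holds, to use the stability estimate following Theorem~\ref{th6.2} to pass to the limit, and to identify the limit as a solution via the Dynkin game representation.

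First I would observe that the special semimartingale $X$ supplied by (H6) is automatically of class (D) (since $L \le X \le U$ with $L, U \in (D)$), and, as a class (D) special semimartingale, admits a Doob--Meyer decomposition $X = X_0 + N + A$ with $N$ a uniformly integrable martingale and $A$ predictable of integrable variation; writing $A = A^+ - A^-$ then exhibits $X$ as a difference of two class (D) supermartingales. Combining (H2) and (H4) with the pathwise boundedness of $X$ (as a class (D) semimartingale with UI martingale part and integrable-variation predictable part, $X_\cdot(\omega)$ is bounded on $[0, T(\omega)]$ for almost every $\omega$) yields $\int_0^T |f(r, X_r)|\,dr < \infty$ a.s., so the only obstruction to (H6*) is the lack of integrability in expectation.

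To overcome this, I would approximate $f$ by $f_n(t,y) = (f(t,y) \vee (-ng(t))) \wedge (ng(t))$ for a strictly positive integrable $g$; then $|f_n(r, X_r)| \le n g(r)$ and (H6*) holds for each $n$, so Theorem~\ref{th5} yields a unique solution $(Y^n, M^n, R^n)$ of \textnormal{RBSDE}$^T(\xi, f_n + dV, L, U)$ with $Y^n$ of class (D). Although $\{f_n\}$ is generally not monotone in $n$, the pointwise convergence $f_n \to f$ combined with the stability estimate established right after Theorem~\ref{th6.2} (applied with the same $\xi$, $L$, $U$) reduces the $\|\cdot\|_1$-convergence of $\{Y^n\}$ to controlling
\[
\sup_{\rho, \delta \in \mathcal{S}} E\!\int_0^{\sigma \wedge \tau} |f_n - f_m|\bigl(r, \mathcal{E}^{f_n}_{r, \tau \wedge \sigma}(Z^{\rho, \delta})\bigr)\,dr \to 0 \quad \text{as } n, m \to \infty.
\]
Lemma~\ref{lm.ad3.ilace} dominates each $\mathcal{E}^{f_n}$-expectation by a common class (D) supermartingale, Proposition~\ref{stw.fexp}(iii) yields pointwise convergence, and a dominated convergence argument (using (H2) to bound $f_n$ and $f$ by their values at the barriers) closes the estimate, so $\{Y^n\}$ is Cauchy in $\|\cdot\|_1$ and converges to a class (D) limit $Y$ with $L \le Y \le U$.

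To identify $Y$ as the first component of a solution of \textnormal{RBSDE}$^T(\xi, f + dV, L, U)$ I would pass to the limit in the Dynkin game representation of Theorem~\ref{th6.2} for each $Y^n$ using the same stability tools. For the remaining components, localize along the chain $\tau_k = \inf\{t : \int_0^t |f(r, X_r)|\,dr \ge k\} \wedge T_k$, which increases to $T$ a.s.: on each $[0, \tau_k]$ the required integrabilities hold in expectation, and the Mertens-type decomposition together with the regularization arguments of \cite[Lemmas 3.1--3.3]{GIOQ} yield predictable $R^k = R^{k,+} - R^{k,-}$ and a local martingale $M^k$ satisfying the minimality condition on $[0, \tau_k]$; uniqueness on $[0, \tau_k]$ (Proposition~\ref{sob4}) enforces consistency as $k$ grows, producing global $M \in \MM_{loc}$ and $R \in \VV_p$, and uniqueness of the limiting triple is immediate from Proposition~\ref{sob4}. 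The main obstacle is the twofold limit passage---transferring the essential suprema/infima over stopping systems in the Dynkin game representation and the minimality conditions for $R^n = R^{n,+} - R^{n,-}$ to the limit---which hinges on the combined use of Proposition~\ref{stw.fexp}(iii), Lemma~\ref{lm.ad3.ilace}, and monotone convergence along $\{\tau_k\}$.
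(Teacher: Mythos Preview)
Your diagnosis that ``the only obstruction to (H6*) is the lack of integrability in expectation'' is where the argument breaks. Condition (H6*) has two parts: $X$ must be a difference of two supermartingales of class (D) on $[0,T]$, and $E\int_0^T |f(r,X_r)|\,dr < \infty$. Your truncation $f_n$ secures the second requirement, but does nothing for the first. A special semimartingale of class (D) need \emph{not} be a difference of class (D) supermartingales on all of $[0,T]$: with $T=\infty$, take $X_t=\sin t$ and $L\equiv -1$, $U\equiv 1$. This $X$ is bounded (hence class (D)) and predictable of locally finite variation (hence special), but any representation $X=S^1-S^2$ with $S^1,S^2$ deterministic supermartingales forces $S^1,S^2$ nonincreasing with $(S^1_0-S^1_t)+(S^2_0-S^2_t)\ge |X|_{[0,t]}\to\infty$, so neither $S^i$ can remain bounded. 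Thus your claim that the canonical decomposition has ``$A$ predictable of integrable variation'' is false in general, (H6*) fails for the pair $(f_n,X)$, and Theorem~\ref{th5} cannot be invoked to produce the approximants $Y^n$.

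This is exactly why the paper's proof does considerably more than truncate $f$. It first localizes along a sequence $\{\gamma_k\}$ on which $X$ \emph{is} a difference of class (D) supermartingales, then modifies the barriers to $L^n,U^m$ by replacing $L,U$ beyond $\gamma_n,\gamma_m$ with their Snell-type envelopes $\hat L,\hat U$, and constructs an auxiliary process $X^{n,m}$ that coincides with $X$ on $[0,\gamma_n\wedge\gamma_m]$ and with a genuine class (D) supermartingale difference afterward. Only with all three modifications does (H6*) hold globally for the data $(f_{n,m},L^n,U^m,X^{n,m})$, so that Theorem~\ref{th5} applies. The double-indexed monotone scheme, the barrier surgery, and the separate verification of the terminal condition via Proposition~\ref{prop6.1} are all forced by this structural obstruction, not incidental complications you can bypass.
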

\begin{proof}
Let $X$ be the process appearing in condition (H6). Since $X$ is a special semimartingale, there exists an increasing sequence $\{\gamma_k\}\subset \TT$ such that $X$ is a difference of supermartingales of class (D) on $[0,\gamma_k]$ for every $k\ge1$. Let $\varrho$ be a strcitly positive Borel measurable function on $\BR^+$ such that $\int_0^\infty\varrho(t)\,dt<\infty$, and let
\[
f_{n,m}(t,y)= \frac{n\varrho(t)}{1+n\varrho(t)}\max\{\min\{f(t,y),n\},-m\}.
\]
Observe that $f_{n,m}$ is increasing with respect to $n$ and decreasing with respect to $m$. Moreover, $f_{n,m}(t,y)\nearrow f_m(t,y)=\max\{f(t,y),-m\}$ as $n\rightarrow\infty$  and $f_m(t,y)\searrow f(t,y)$ as $m\rightarrow\infty$. Let $\hat L,\hat U$ be regulated processes defined by 
\[
\hat L_\alpha=\essinf_{\tau\in\TT_\alpha}E(L_\tau|\FF_\alpha),\quad \hat U_\alpha=\esssup_{\tau\in\TT_\alpha}E(U_\tau|\FF_\alpha),\quad \alpha\in\TT.
\]
By Lemma \ref{lm.ad5.1}, $-\hat L, \hat U$  are supermartingales of class (D) on $[0,T]$. It is clear that $\hat L\le L\le U\le \hat U$.
We define
\[
L^n_t= L_t\mathbf{1}_{\{t\le \gamma_n\}}+\hat L_t\mathbf{1}_{\{t>\gamma_n\}},
\qquad U^n_t= U_t\mathbf{1}_{\{t\le \gamma_n\}}
+\hat U_t\mathbf{1}_{\{t>\gamma_n\}}.
\]
Observe that
\[
\hat L\le L^n\le L^{n+1}\le L\le U\le U^{n+1}\le U^n\le \hat U,\quad n\ge 1.
\]
Moreover, $L^n\nearrow L$ and $U^n\searrow U$. Finally, we set
\[
X^{n,m}_t= X_t\mathbf{1}_{\{t\le\gamma_n\wedge \gamma_m\}}
+\hat L_t\mathbf{1}_{\{t>\gamma_n\ge \gamma_m\}}
+\hat U_t\mathbf{1}_{\{t>\gamma_m> \gamma_n\}}.
\]
Observe that $L^n\le X^{n,m}\le U^m$ and that the process $X^{n,m}$ is a difference of supermartingales of class (D) on $[0,T]$.
Therefore, by Theorem  \ref{th5}, there exists a unique solution $(Y^{n,m},M^{n,m},R^{n,m})$ of
RBSDE$^T(\xi,f_{n,m},L^n,U^m)$ such that $Y^{n,m}$ is of class (D). By Proposition \ref{sob4}, $\{Y^{n,m}\}$
is nondecreasing  with respect to $n$ and nonincreasing  with respect to $n$. Set
\[
Y^m= \sup_{n\ge 1} Y^{n,m},\qquad Y=\inf_{m\ge 1} Y^m.
\]
Since $\hat L\le Y^{n,m}\le\hat U,\, n,m\ge 1$ and $L,U$ are of class (D), by the diagonal method we can find a subsequence $(m_n)$ of $(m)$ such that $\lim_{n\rightarrow\infty}E|Y^{n,m_n}_{\gamma_{k}}-Y_{\gamma_k}|=0$ for every $k\ge 1$.
Let $k\le n\wedge m$.
By Theorem \ref{th6.2}, for every $\alpha\in\TT_{0,\gamma_k}$,
\[
Y^{n,m}_\alpha=\esssup_{\rho=(\tau,H)\in\mathcal{S}_{\alpha,\gamma_k}}
\essinf_{\delta=(\sigma,G)\in\mathcal{S}_{\alpha,\gamma_k}}
\mathcal{E}^{f_{n,m}}_{\alpha,\tau\wedge\sigma}
(L^u_{\rho}\mathbf{1}_{\{\tau\le\sigma<\gamma_k\}}
+U^l_{\delta}\mathbf{1}_{\{\sigma<\tau\}}
+Y^{n,m}_{\gamma_k}\mathbf{1}_{\{\tau=\sigma=\gamma_k\}}).
\]
Set, for every $\alpha\in\TT_{0,\gamma_k}$,
\[
Y(\alpha)=\esssup_{\rho=(\tau,H)\in\mathcal{S}_{\alpha,\gamma_k}}
\essinf_{\delta=(\sigma,G)\in\mathcal{S}_{\alpha,\gamma_k}}
\mathcal{E}^{f}_{\alpha,\tau\wedge\sigma}
(L^u_{\rho}\mathbf{1}_{\{\tau\le\sigma<\gamma_k\}}
+U^l_{\delta}\mathbf{1}_{\{\sigma<\tau\}}
+Y_{\gamma_k}\mathbf{1}_{\{\tau=\sigma=\gamma_k\}}).
\]
By Proposition \ref{stw.fexp},
\begin{align*}
E|Y^{n,m_n}_\alpha-Y(\alpha)|&\le E|Y^{n,m_n}_{\gamma_k}-Y_{\gamma_k}|\\&\quad
+\sup_{\rho,\delta\in\mathcal S_{\alpha,\gamma_k}}E\int_\alpha^{\gamma_k}
|f-f_{n,m_n}|(r,\EE^f_{r,\tau\wedge\sigma}(Z^{\rho,\delta,\gamma_k}))\,dr,
\end{align*}
where
\[
Z^{\rho,\delta,\gamma_k}
=L^u_{\rho}\mathbf{1}_{\{\tau\le\sigma<\gamma_k\}}
+U^l_{\delta}\mathbf{1}_{\{\sigma<\tau\}}
+Y_{\gamma_k}\mathbf{1}_{\{\tau=\sigma=\gamma_k\}}.
\]
Observe that
\[
|Z^{\rho,\delta,\gamma_k}|\le |L_{\tau\wedge\sigma}|+|\overleftarrow L_{\tau\wedge\sigma}|+|U_{\tau\wedge\sigma}|+|\underleftarrow U_{\tau\wedge \sigma}|+|Y_{\tau\wedge\sigma}|.
\]
Since all the processes $|L|, |\overleftarrow L|, |U|, |\underleftarrow U|, |Y|$ are of class (D) on $[0,T]$, using Lemma \ref{lm.ad3.ilace} we conclude  that
there exists a supermartingale $U$ of class (D) on $[0,T]$ such that
\begin{equation}
\label{eq.ad9.l1}
\EE^f_{\alpha,\tau\wedge\sigma}(Z^{\rho,\delta,\gamma_k})\le U_\alpha,\qquad \alpha\in \TT_{0,\tau\wedge\sigma}.
\end{equation}
Since $U$ is of class (D) on $[0,T]$, by using (H4) we see that there exists a chain $\{\tau_k\}$ such that
\begin{equation}
\label{eq.ad9.l2}
E\int_0^{\tau_k}|f(r,U_r)|\,dr\le k,\quad k\ge 1.
\end{equation}
By replacing $\gamma_k$ by $\gamma_k\wedge\tau_k$ we may assume that $\gamma_k\le \tau_k$.
By (\ref{eq.ad9.l1}), (\ref{eq.ad9.l2}), (H2), (H3) and the Lebesgue dominated convergence theorem,
\[
\sup_{\rho,\delta\in\mathcal S_{\alpha,\gamma_k}}E\int_\alpha^{\gamma_k}|f-f_{n,m_n}|(r,\EE^f_{r,\tau\wedge\sigma}(Z^{\rho,\delta,\gamma_k}))\,dr\rightarrow 0
\]
as $n\rightarrow\infty$. Thus $Y_\alpha=Y(\alpha),\, \alpha\in\TT$. By Theorem \ref{th6.2}, $Y$ is the first component of the solution to RBSDE$^{\gamma_k}(Y_{\gamma_k},f,L,U)$. What is left is to show that $Y_{T_a}\rightarrow \infty$ as $a\rightarrow \infty$. For this we observe that by Proposition \ref{sob4},
\[
\overline Y^{n,m}\le Y^{n,m}\le \underline{Y}^{n,m},\quad n,m\ge 1,
\]
where $(\underline Y^{n,m},\underline M^{n,m},\underline K^{n,m})$ is a solution to $\mbox{\underline R}$BSDE$^T(\xi,f_m,L^n)$
and $(\overline Y^{n,m},\overline M^{n,m},\overline K^{n,m})$ is a solution to $\overline{\rm{R}}$BSDE$^T(\xi,f_n,U^m)$.
By Proposition \ref{prop6.1}(ii), $\overline Y^{n,m}\nearrow \overline Y^m$, and by Proposition \ref{prop6.1}(i), $\underline Y^{n,m}\nearrow \underline Y^m$, where $(\overline Y^m,\overline M^m,\overline K^m)$ is a solution to $\overline{\rm{R}}$BSDE$^T(\xi,f,U^m)$,
and $(\underline Y^m,\underline M^m,\underline K^m)$ is a solution to $\mbox{\underline R}$BSDE$^T(\xi,f_m,L)$. Therefore
$\overline Y^{m}\le Y^{m}\le \underline{Y}^{m}$.
Letting $m\rightarrow \infty$ and using once again  Proposition \ref{prop6.1} yields
\[
\overline Y\le Y\le \underline{Y},
\]
where $(\overline Y,\overline M,\overline K)$ is a solution to $\overline{\rm{R}}$BSDE$^T(\xi,f,U)$,
and $(\underline Y,\underline M,\underline K)$ is a solution to $\mbox{\underline R}$BSDE$^T(\xi,f,L)$.
From this we get the desired result.
\end{proof}

\begin{theorem}\label{th5.5.5.5v1}
Assume that \textnormal{(H2)--(H4), (H6)} are satisfied and  that $E|\xi|<\infty$ and $|V|_T<\infty$ a.s. Then there exists a unique solution $(Y,M,R)$ to \textnormal{RBSDE}$^T(\xi,f+dV,L,U)$ such that $Y$ is of class \textnormal{(D)}.
\end{theorem}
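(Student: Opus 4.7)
The plan is to reduce to Theorem \ref{th5.5.5.5} by truncating the data so that the full (H1) is restored, then to pass to the limit using the stability proposition preceding this theorem together with Lemma \ref{lm.ad3.ilace}. Concretely, introduce the stopping times
\[
\zeta_n:=\inf\Big\{t\ge 0:|V|_t+\int_0^t|f(r,0)|\,dr\ge n\Big\}\wedge T,
\]
which increase to $T$ a.s.\ thanks to $|V|_T<\infty$ a.s.\ and (H4). Setting $V^n_t:=V_{t\wedge\zeta_n}$ and $f^n(t,y):=f(t,y)\mathbf{1}_{\{t\le\zeta_n\}}$, one has $E|V^n|_T\le n$ and $E\int_0^T|f^n(r,0)|\,dr\le n$, so the data $(\xi,f^n+dV^n,L,U)$ satisfies (H1)--(H4) and (H6) with the same Mokobodzki semimartingale $X$. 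Theorem \ref{th5.5.5.5} then yields a unique class (D) solution $(Y^n,M^n,R^n)$ of RBSDE$^T(\xi,f^n+dV^n,L,U)$.

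Next I would show that $\{Y^n\}$ is Cauchy in $\|\cdot\|_1$. The stability proposition, extended so as to incorporate the finite variation terms $dV^n$ into the Dynkin representation of Theorem \ref{th6.2}, bounds $\|Y^n-Y^m\|_1$ for $n\le m$ by a term of the form
\[
\sup_{\rho=(\tau,H),\,\delta=(\sigma,G)\in\mathcal S}E\int_{\zeta_n\wedge\tau\wedge\sigma}^{\tau\wedge\sigma}|f|\bigl(r,\EE^{f^n}_{r,\tau\wedge\sigma}(Z^{\rho,\delta})\bigr)\,dr
\]
plus a $V$-contribution controlled by $E(|V|_T-|V|_{\zeta_n})$ (the total variation of $V$ on $(\zeta_n,T]$, pulled through the Dynkin game). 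The integral term vanishes uniformly in $\rho,\delta$ as $n\to\infty$ by Lemma \ref{lm.ad3.ilace} applied with the chain $\delta_k:=\zeta_k$: $Z^{\rho,\delta}$ is dominated by the class (D) combination $|L|+|\overleftarrow L|+|U|+|\underleftarrow U|+|\xi|$, and Proposition \ref{stw.fexp}(iii) allows us to substitute $\EE^{f^n}$ for $\EE^f$ in the argument.

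Once $Y^n\to Y$ in $\|\cdot\|_1$ along a suitable subsequence, $Y$ is a class (D) process; passing to the limit in the Dynkin representation via Proposition \ref{stw.fexp}(iii) identifies $Y$ as the essential supremum/infimum associated with the original data $(\xi,f+dV,L,U)$. The end-of-proof machinery of Theorem \ref{th5.5.5.5} --- local characterization via \cite[Lemmas 3.1--3.3]{GIOQ} on each $[0,\gamma_k]$ coming from the special semimartingale localization of $X$, and the terminal condition $\lim_{a\to\infty}Y_{T_a}=\xi$ from sandwich bounds $\overline Y^n\le Y^n\le\underline Y^n$ with the one-barrier solutions (existing for the approximating data by Theorem \ref{th4}) --- then recovers $M\in\MM_{loc}$ and $R\in\VV_p$ so that $(Y,M,R)$ solves RBSDE$^T(\xi,f+dV,L,U)$. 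Uniqueness is Proposition \ref{sob4}. The main obstacle is the $V$-contribution to the Cauchy bound: the stability proposition is stated for $V\equiv 0$, and since $|V|_T$ is only a.s.\ finite (not integrable), extracting a subsequence along which $E(|V|_T-|V|_{\zeta_n})\to 0$ is delicate and likely requires an additional localization argument (e.g., an Egorov-type selection of the subsequence, or a decomposition of $V$ into its Jordan parts on each localizing interval $[0,\gamma_k]$).
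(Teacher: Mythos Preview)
You correctly identify the central obstacle, and it is fatal to your approach as stated: the tail $E(|V|_T-|V|_{\zeta_n})$ need not tend to zero (it can be $+\infty$ for every $n$) because $|V|_T$ is only a.s.\ finite, and no Egorov-type selection or subsequence extraction repairs this when $E|V|_T=\infty$. The paper avoids the issue by a different route. The key observation you miss is that your $\{\zeta_n\}$ is not merely increasing to $T$ a.s.\ but is a \emph{chain} on $[0,T]$: since $|V|_T<\infty$ a.s.\ and, by (H4), $\int_0^T|f(r,0)|\,dr<\infty$ a.s., for every $\omega$ one has $\zeta_n(\omega)=T(\omega)$ for all sufficiently large $n$. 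The paper then reruns the construction of Theorem~\ref{th5.5.5.5} with the localizing sequence $\gamma_k$ replaced by this chain, producing a single process $Y$ of class (D) that is, for each $k$, the first component of a solution on $[0,\zeta_k]$. Uniqueness makes these local solutions consistent across $k$, and the chain property patches them into one process on $[0,T]$; no global $\|\cdot\|_1$-Cauchy estimate is needed and the $V$-tail never appears.

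The chain property also replaces your sandwich argument for the terminal condition. The paper explicitly remarks that one \emph{cannot} argue via global one-barrier solutions $\overline Y,\underline Y$ here: without (H1), Theorem~\ref{th4} does not apply and the solutions to $\overline{\mathrm R}\mathrm{BSDE}^T(\xi,f,U)$ and $\underline{\mathrm R}\mathrm{BSDE}^T(\xi,f,L)$ need not exist on $[0,T]$. Your variant, sandwiching $Y^n$ between one-barrier solutions for the \emph{approximating} data, yields $Y^n_{T_a}\to\xi$ for each fixed $n$, but transferring this to the limit $Y$ requires precisely the $\|\cdot\|_1$-convergence you cannot establish. In the paper, $Y_{T_a}\to\xi$ follows directly from the chain: for each $\omega$ there is $k_\omega$ with $\zeta_k(\omega)=T(\omega)$ for $k\ge k_\omega$, and the local solution on $[0,\zeta_k]$ already satisfies $Y_{\zeta_k\wedge a}\to Y_{\zeta_k}=\xi$ as $a\to\infty$.
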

\begin{proof}
By (H4) and the assumption that $|V|_T<\infty$ a.s., there exists a chain $\{\tau_k\}$ on $[0,T]$ such that
\[
E\int_0^{\tau_k}|f(r,0)|\,dr+E\int_0^{\tau_k}\,d|V|_r<\infty,\quad k\ge 1.
\]
Therefore repeating the proof of Theorem \ref{th5.5.5.5} with $\gamma_k$ replaced by $\tau_k$ we get the existence of a regulated process $Y$ of class (D) on $[0,T]$ such that $Y$ is the first component of the solution to RBSDE$^T(Y_{\tau_k},f,L,U)$ for every $k\ge 1$. It remains to show that $Y_{T_a}\rightarrow \xi$ as $a\rightarrow \infty$. We can not argue as in the proof of Theorem \ref{th5.5.5.5}, because  in general, under the assumptions of our theorem, there are no  solutions to $\overline{\rm{R}}$BSDE$^T(\xi,f,U)$
and $\mbox{\underline R}$BSDE$^T(\xi,f,L)$. Instead, to show that $Y_{T_a}\rightarrow\xi$ we use the fact that $\{\tau_k\}$ is a chain. By the definition of a solution to RBSDE$^T(Y_{\tau_k},f,L,U)$, $Y_{\tau_k\wedge a}\rightarrow Y_{\tau_k}$ as $a\rightarrow \infty$ for every $k\ge 1$. Since $\{\tau_k\}$
is a chain, $Y_{\tau_k(\omega)}(\omega)=Y_{T(\omega)}(\omega)=\xi(\omega),\, k\ge k_\omega$, which implies the desired convergence.
\end{proof}

\subsection*{Acknowledgements}
{\small This work was supported by Polish National Science Centre
(Grant No. 2016/23/B/ST1/01543).}

\end{document}